\numberwithin{equation}{section}
\title[Boundary Asymptotics of Non-Intersecting Brownian Motions]{Boundary Asymptotics of Non-Intersecting Brownian Motions: Pearcey,
	Airy and a Transition}
\author[]{Thorsten Neuschel$^*$}
\address{$^*$School of Mathematical Sciences, Dublin City University}
\email{thorsten.neuschel@dcu.ie}
\author[]{Martin Venker$^\dag$}
\address{$^*$School of Mathematical Sciences, Dublin City University}
\email{martin.venker@dcu.ie}
\newcommand{\R}{\mathbb{R}}
\newcommand{\C}{\mathbb{C}}
\newcommand{\N}{\mathbb N}
\newcommand{\E}{\mathbb{E}}
\newcommand{\Ai}{{\rm Ai}}
\theoremstyle{plain}
\newtheorem{thm}{Theorem}[section]
\newtheorem{cor}[thm]{Corollary}
\newtheorem{lemma}[thm]{Lemma}
\newtheorem{prop}[thm]{Proposition}
\theoremstyle{remark}
\newtheorem{remark}[thm]{Remark}
\newcommand{\lb}{\left(}
\newcommand{\rb}{\right)}
\renewcommand{\O}{\mathcal O}
\newcommand{\lv}{\lvert}
\newcommand{\rv}{\rvert}
\newcommand{\lvv}{\left\lvert}
\newcommand{\rvv}{\right\rvert}
\renewcommand{\t}{\tau}
\newcommand{\g}{\gamma}
\newcommand{\s}{\sigma}
\renewcommand{\a}{\alpha}
\renewcommand{\b}{\beta}
\renewcommand{\epsilon}{\varepsilon}
\newcommand{\e}{\epsilon}
\newcommand{\w}{\omega}
\newcommand{\z}{\zeta}
\renewcommand{\hat}{\widehat}
\renewcommand{\d}{\delta}
\newcommand{\supp}{\operatorname{supp}}
\newcommand{\x}{\mathbf{x}}
\newcommand{\A}{\mathcal A}
\newcommand{\n}{{(n)}}
\newcommand{\argmax}{\operatorname{argmax}}
\newcommand{\0}{{(0)}}
\newcommand{\1}{{(1)}}
\newcommand{\2}{{(2)}}
\newcommand{\3}{{(3)}}
\begin{document}

\begin{abstract} 
	
	We study $n$ non-intersecting Brownian motions,  corresponding to the eigenvalues of an $n\times n$ Hermitian Brownian motion. At the boundary of their limit shape we find  that only three universal processes can arise: the Pearcey process close to merging points, the Airy line ensemble at edges and a novel determinantal process describing the transition from the Pearcey process to the Airy line ensemble. The three cases are distinguished by a remarkably simple integral condition. Our results hold under very mild assumptions, in particular we do not require any kind of convergence of the initial configuration as $n\to\infty$. Applications to largest eigenvalues of macro- and mesoscopic bulks and to random initial configurations are given.
\end{abstract}

\keywords{Random Matrices, Dyson's Brownian motion, Airy line ensemble, Pearcey
	process, transition, universality}
\maketitle
\section{Introduction and Statement of Results}
Non-Intersecting Brownian Motions are arguably among the most important dynamical models of eigenvalues of random matrices, sharing many characteristics with other eigenvalue processes and random growth models. For the definition, let $(M(t))_{t\geq0}$ be a Brownian motion in the space of $n\times n$ Hermitian matrices, starting with a deterministic Hermitian matrix $M(0)$. Such a Hermitian Brownian motion is determined by requiring that the diagonal entries of $(M(t)-M(0))_{t\geq0}$ are independent standard real Brownian motions and the entries above the diagonal are independent standard complex Brownian motions, independent from the real Brownian motions on the diagonal. The stochastic process $(X(t))_{t\geq0}$ of the $n$ real eigenvalues of $(n^{-1/2}M(t))_{t\geq0}$, ordered increasingly for definiteness, is then called \textit{Non-Intersecting Brownian Motions (NIBM)} because it has the same law as $n$ independent real Brownian motions (with diffusion factor $n^{-1/2}$) conditioned not to intersect for all times \cite{Grabiner}. It is also known as \textit{Dyson's Brownian motion} due to Dyson being the first one to study the eigenvalues of a Hermitian Brownian motion \cite{Dyson}. Random Matrix Theory is mostly about the understanding of the behavior of the random eigenvalues and eigenvectors in the limit of the matrix size $n\to\infty$. The theory's success in manifold applications is boosted by the property of universality, which is also fascinating from a mathematical perspective. Universality here means that certain asymptotic behavior of spectral quantities, especially eigenvalues, depends only on very few properties of the underlying matrix model. For example, distributions of a random matrix' entries have typically very little influence on certain important eigenvalue asymptotics, like their spacings, while different symmetries of the matrix usually lead to different universality classes.

For NIBM we can observe, as indicated by Figure \ref{figure1},
 \begin{figure}[h]
 	\centering
 		\includegraphics[trim=2cm  5cm 5cm 9cm,clip,width=0.5\linewidth]{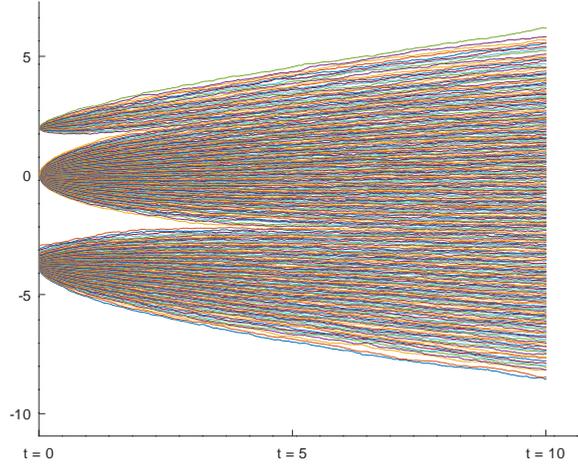}\vspace{-5em}
 \caption{Non-Intersection Brownian Motions evolving in time.}
 	\label{figure1}
 \end{figure}
   that for $n\to\infty$ certain regions in space-time become dense with a bulk of eigenvalues present, while other regions remain void (with high probability). We will in this article study the correlations of eigenvalues around the boundaries of dense regions, deriving a full classification for the possible asymptotic behavior at non-vanishing  times in the absence of outliers.

 At an edge of the boundary between a dense and a void region, for $n\to\infty$ in an appropriate scaling, typically the Airy line ensemble arises, being a determinantal stochastic process described in terms of the \textit{extended Airy kernel} \cite{PS,CorwinHammond}
	\begin{align}\label{extended_Airy}
	\mathbb K^{\rm Ai}_{\t_1,\t_2}(u,v)&:=\frac{1}{(2\pi i)^2} \int_{\Sigma^{\rm Ai}} d\zeta \int_{\Gamma^{\rm Ai}} d\w ~ \frac{\exp\lb\frac{\zeta^3}3-\zeta v-\t_2\zeta^2-\frac{\w^3}3+\w u+\t_1\w^2\rb}{\zeta-\w}\\
	&-1(\t_1>\t_2)\frac{1}{\sqrt{4\pi(\t_1-\t_2)}}\exp\lb-\frac{(u-v)^2}{4(\t_1-\t_2)}\rb,
	\end{align}
	where $\t_1,\t_2\in\R$ are time and $u,v\in\R$ are spatial arguments. The contour $\Sigma^{\Ai}$ consists of the two rays from $\infty e^{-i\frac{\pi}3}$ to 0 and from 0 to  $\infty e^{i\frac{\pi}3}$ and $\Gamma^{\rm Ai}$ consists of the two rays from $\infty e^{-i\frac{2\pi}3}$ to 0 and from 0 to  $\infty e^{i\frac{2\pi}3}$. Note that this definition of the kernel differs from the one used in  \cite{Johansson03,BorodinKuan,Petrov} on random growth models by
 a conjugation and a shift of variables. We use the same definition as in \cite{DuseMetcalfe}.
Statistics of the Airy line ensemble have been found for large classes of random matrices and related models, see e.g.~\cite{Soshnikov,PasturShcherbina03,BEY,EYbook,KSSV,KV}. It also appears in a number of studies of interacting particle systems belonging to the KPZ universality class, see e.g.~\cite{Corwinsurvey,SpohnLN,KK} and references therein. 

As can be seen in Figure \ref{figure1}, at certain critical points in space-time two dense regions merge into one, and this is where other interesting correlations appear. This situation has so far been mainly associated with the Pearcey process, another determinantal process, see e.g.~\cite{TracyWidom2,Erdosetal,CapitainePeche,BrezinHikami,BK1,LiechtyWang1,LiechtyWang2,GeudensZhang,OkounkovReshetikin,Erdosetalreal}. This process is given in terms of the \textit{extended Pearcey kernel}
\begin{align}
	\mathbb K^{\rm P}_{\t_1,\t_2}(u,v):&=\frac{1}{(2\pi i)^2} \int_{i\R} d\zeta \int_{\Gamma^{\rm P}} d\w ~ \frac{\exp\lb-\frac{\zeta^4}4-\frac{\t_2\zeta^2}2-v\zeta +\frac{\w^4}4+\frac{\t_1\w^2}2+u\w\rb}{\zeta-\w}\\
	&-1(\t_1>\t_2)\frac{1}{\sqrt{2\pi(\t_1-\t_2)}}\exp\lb-\frac{(u-v)^2}{2(\t_1-\t_2)}\rb,\label{eq:Pearceykernel}
	\end{align}
	where $\Gamma^{\rm P}$ consists of four rays, two from the origin to $\pm\infty e^{-i\pi/4}$ and two from $\pm\infty e^{i\pi/4}$ to the origin.

In our main result, Theorem \ref{thrm_main} below, we will see that apart from the (extended) Airy and Pearcey kernels there is one more kernel that can appear at the boundary of the dense regions. We call this kernel the \emph{Pearcey-to-Airy transition kernel with parameter $a$},  in the following called \emph{transition kernel}, given for $\t_1,\t_2,u,v\in\R$ and $a\geq0$ by  
\begin{align}\label{eq:transitionkernel}
	\mathbb K^a_{\t_1,\t_2}(u,v):&=\frac{1}{(2\pi i)^2} \int_{i\R} d\zeta \int_{\Gamma^P} d\w~ \frac{\exp\lb-\frac{\zeta^4}{4}+\frac{a\z^3}{3}-\frac{\t_2\zeta^2}{2}-v\zeta +\frac{\w^4}{4}-\frac{a\w^3}{3}+\frac{\t_1\w^2}{2}+u\w\rb}{\zeta-\w}\\
	&-1(\t_1>\t_2)\frac{1}{\sqrt{2\pi(\t_1-\t_2)}}\exp\lb-\frac{(u-v)^2}{2(\t_1-\t_2)}\rb,
\end{align}
where the contour $\Gamma^P$ is the X-shaped contour of the Pearcey kernel. This kernel interpolates between the Pearcey kernel ($a=0$) and the Airy kernel ($a\to\infty$) as can be seen from the following proposition.
\begin{prop}\label{prop:interpolation} 
	We have $\mathbb K^0=\mathbb K^{\rm P}$ and for any $u,v,\t_1, \t_2 \in\R$
	\begin{align}
		\lim_{a\to +\infty}a^{1/3}\mathbb K^a_{2a^{2/3}\t_1,2a^{2/3}\t_2}(a^{1/3}u,a^{1/3}v)=\mathbb K^{\rm Ai}_{\t_1,\t_2}(u,v).\label{statement_prop1}
	\end{align}
\end{prop}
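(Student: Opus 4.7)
The identity $\mathbb K^0 = \mathbb K^{\rm P}$ follows directly by comparing \eqref{eq:transitionkernel} with \eqref{eq:Pearceykernel}. For \eqref{statement_prop1}, the plan is a steepest-descent analysis built around the observation that the quartic phase $-\zeta^4/4 + a\zeta^3/3$ has a degenerate Pearcey-type saddle at $\zeta = 0$ which \emph{unfolds} into an Airy-type cubic saddle on the scale $\zeta \sim a^{-1/3}$. I would substitute $\zeta = a^{-1/3}\tilde\zeta$ and $\omega = a^{-1/3}\tilde\omega$ in the double integral of $\mathbb K^a_{2a^{2/3}\t_1,2a^{2/3}\t_2}(a^{1/3}u, a^{1/3}v)$. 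A direct computation gives three things: the measure satisfies $d\zeta\,d\omega/(\zeta-\omega) = a^{-1/3}\,d\tilde\zeta\,d\tilde\omega/(\tilde\zeta-\tilde\omega)$, whose Jacobian is absorbed by the prefactor $a^{1/3}$ in the proposition; the exponent becomes the extended Airy exponent $\tilde\zeta^3/3 - v\tilde\zeta - \t_2\tilde\zeta^2 - \tilde\omega^3/3 + u\tilde\omega + \t_1\tilde\omega^2$ plus a correction $(\tilde\omega^4 - \tilde\zeta^4)/(4a^{4/3})$ that vanishes uniformly on compact sets; and the time-ordering Gaussian acquires a factor $a^{-1/3}$ which combines with $a^{1/3}$ to reproduce exactly the Airy Gaussian of \eqref{extended_Airy}.

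It remains to move the post-substitution contours $(i\R, \Gamma^P)$ to $(\Sigma^{\Ai}, \Gamma^{\Ai})$ by Cauchy's theorem and pass to the limit. For $\tilde\omega$ this is direct: on $\Gamma^{\Ai}$ both the cubic $-\tilde\omega^3/3$ and the small quartic $+\tilde\omega^4/(4a^{4/3})$ yield exponential decay (since $\cos(8\pi/3) = -1/2 < 0$), so each ray of $\Gamma^P$ can be rotated into the corresponding ray of $\Gamma^{\Ai}$ with vanishing arcs at infinity. For $\tilde\zeta$, the Airy angles $\pm\pi/3$ lie in the cubic-decay sector but the small quartic $-\tilde\zeta^4/(4a^{4/3})$ reverses sign there, so I would deform $i\R$ into $\Sigma^{\Ai}$ only inside a disk of radius $R_a$ (with $R_a \to \infty$ and $R_a^4/a^{4/3} \to 0$, for instance $R_a = a^{1/4}$), retaining $i\R$ outside the disk where the quartic decay is restored; the connecting arcs at $|\tilde\zeta|=R_a$ vanish because in the sub-sector $[\pi/3,3\pi/8]$ the cubic dominates and in $[3\pi/8,\pi/2]$ the quartic does. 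On the compact $\Sigma^{\Ai}$-portion the integrand then converges uniformly to the Airy integrand, and the Airy cubic decay $e^{-|\tilde\zeta|^3/3}$ provides an integrable majorant that yields the Airy double integral by dominated convergence.

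The main obstacle I foresee is controlling the tail contribution from the $i\R$-part of this hybrid $\tilde\zeta$-contour for $|\tilde\zeta| \gtrsim R_a$: the isolated $\tilde\zeta$-integrand on the imaginary axis has magnitude $\exp\bigl(\t_2|\tilde\zeta|^2 - |\tilde\zeta|^4/(4a^{4/3})\bigr)$ which peaks near $|\tilde\zeta| \sim a^{2/3}$ at a size growing exponentially in $a^{4/3}$, yet simultaneously carries rapid oscillation from the pure imaginary cubic $-i|\tilde\zeta|^3/3$. Integration by parts in $\tilde\zeta$ --- or, equivalently, a further deformation of the $i\R$-tail to a steepest-descent direction of the full rescaled phase, combined with the decay inherited from the $\tilde\omega$-integral on $\Gamma^{\Ai}$ through the $(\tilde\zeta-\tilde\omega)^{-1}$ factor --- is needed to show that this tail contribution vanishes as $a\to\infty$, thus validating the interchange of limit and integration and completing the argument.
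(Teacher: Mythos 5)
Your algebraic reduction is correct and coincides with the paper's: the substitution $\z\to a^{-1/3}\z$, $\w\to a^{-1/3}\w$ does turn the exponent into the Airy phase plus the correction $(\w^4-\z^4)/(4a^{4/3})$, the Jacobian cancels the prefactor $a^{1/3}$, and the heat-kernel part matches. The substance of the proof, however, lies in the contour deformations, and there your argument has two genuine problems.

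First, the $\w$-contour. The claim that ``each ray of $\Gamma^{\rm P}$ can be rotated into the corresponding ray of $\Gamma^{\Ai}$ with vanishing arcs at infinity'' fails for the two right-hand rays. Rotating the ray at angle $\pi/4$ up to $2\pi/3$ sweeps through the sector $\theta\in(3\pi/8,\pi/2)$, where $4\theta\in(3\pi/2,2\pi)$ gives $\Re\,\w^4>0$ and $3\theta\in(9\pi/8,3\pi/2)$ gives $\Re(-\w^3)>0$; both the quartic and the cubic parts of the phase then grow, so the connecting arcs blow up rather than vanish. There is also a counting obstruction: $\Gamma^{\rm P}$ has four rays and $\Gamma^{\Ai}$ only two, so the right-hand branches must be disposed of by a separate mechanism, not absorbed by rotation. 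The paper's resolution is to rotate the right rays \emph{downward} to angles $\pm\pi/7$ (where $\Re(-\w^3)<0$ and $\Re(\w^4)<0$ simultaneously, and where the intermediate sector is safe because the quartic decays throughout), take $a\to\infty$ there, and only then observe that the two limiting rays at $\pm\pi/7$ can be folded onto the positive real axis, where they carry opposite orientations and cancel; the left rays at $\pm 3\pi/4$ are then deformed to $\pm 2\pi/3$ in the purely cubic limit.

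Second, the $\z$-contour. The tail difficulty you identify is real — on $i\R$ the modulus of the integrand is $\exp\lb\t_2|\z|^2-|\z|^4/(4a^{4/3})\rb$, which for $\t_2>0$ peaks at size $e^{\t_2^2a^{4/3}}$ — but neither of your proposed remedies (integration by parts, or decay borrowed from the $\w$-integral) is carried out, and this is exactly the step where the limit–integral interchange must be justified. The clean fix, used in the paper, makes the hybrid disk construction and the oscillation argument unnecessary: bend $i\R$ at the origin into the two rays at angles $\pm 7\pi/16$ (any angle in $(3\pi/8,\pi/2)$ works), where $\Re\,\z^3<0$ and $\Re\,\z^4>0$, so that \emph{both} the cubic and the quartic phases give exponential decay uniformly in $a\ge 1$; the deformation from $i\R$ to these rays is legitimate because the quartic decays throughout the intervening sector. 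One then passes to the limit on a fixed, $a$-independent contour and deforms to $\Sigma^{\Ai}$ afterwards. As written, your proof is incomplete at both of these points.
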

We will first sketch a non-technical version of our main result. It is formulated in terms of the initial empirical spectral distribution 
\begin{align}
	\mu_{n}:=\frac{1}{n}\sum_{j=1}^n\d_{X_j(0)},\label{def:mu}
\end{align}
which is the only input to our model and we recall that $X_1(0),\dots,X_n(0)$ are the deterministic initial eigenvalues. At time $t>0$, the random 
\begin{align}
	\mu_{n,t}:=\frac{1}{n}\sum_{j=1}^n\d_{X_j(t)}\label{def:mu_t}
\end{align}
is for $n$ large very close to the non-random free convolution $\mu_n\boxplus\s_t$ of $\mu_n$ and the semicircle distribution $\s_t$ with support $[-\sqrt 2t,\sqrt 2t]$. It is also called the deterministic equivalent and is absolutely continuous for $t>0$. The boundary of the support of $\mu_n\boxplus\s_t$, considered in space-time $(x,t)$, provides a non-random shape around which to study NIBM. It can be parameterized conveniently in terms of an initial point $x_n^*\in\R\setminus\{X_1(0),\dots,X_n(0)\}$ through the linear evolution
\begin{align}
	x_n^*(t):=x_n^*+t\int\frac{d\mu_n(s)}{x_n^*-s}.\label{linear_evolution}
\end{align}
 This evolution is the initial stage of a more general evolution \cite{ClaeysKuijlaarsLiechtyWang,CNV2} motivated by Biane's deep study of the free convolution \cite{Biane}.
The crucial fact about $x_n^*(t)$ is that there is a \emph{critical time} 
\begin{align}
	t_{\rm cr}(x_n^*):=\lb\int\frac{d\mu_n(s)}{(x_n^*-s)^2}\rb^{-1},\label{def:t_cr}
\end{align}
for which we have with $\psi_{n,t}$ denoting the density of $\mu_{n,t}$ for $t>0$
\begin{align}
	\psi_{n,t}(x_n^*(t))=0, \quad\text{for all }0<t\leq t_{\rm cr}(x_n^*), \label{critical1}
\end{align}
and for all $t>t_{\rm cr}(x_n^*)$ small enough we have
\begin{align}
	\psi_{n,t}(x_n^*(t))>0,\label{critical2}
\end{align}
see e.g.~\cite[pp.~1504--1505]{CNV2} for details.
We thus see that $(x_n^*(t))_{t\geq0}$ traces an initial point $x_n^*$ until it gets absorbed into a dense region at time $t_{\rm cr}(x_n^*)$. Varying $x_n^*$ allows us to express every boundary point of the support of $\mu_n\boxplus\s_t$ as $(x_n^*(t_{\rm cr}),t_{\rm cr})$, where $t_{\rm cr}=t_{\rm cr}(x_n^*)$ (we elaborate on this following \eqref{def:y} below).

Our main theorem will use the following two assumptions. 

\paragraph{\textbf{Assumption 1: Significant portion of initial eigenvalues stays in a compact.}}\mbox{}\\
There is some $n$-independent constant $L>0$ such that  
\begin{align}
	\liminf_{n\to\infty}\mu_n([-L,L])>0.
\end{align}
This natural assumption prevents too many starting points $X_j(0)$ going to $\pm\infty$ as $n\to\infty$.\\

\paragraph{\bf Assumption 2: Behavior of inital eigenvalues around $x_n^*$}\mbox{}\\
Let $(x^*_n)_{n\in\N}$ be a bounded sequence of real numbers and $(\mu_n)_{n\in\N}$ be such that there exists an $n$-independent constant  $C>0$ with 
\begin{align}
	\int\frac{d\mu_n(s)}{\lv x_n^*-s\rv^{5}}\leq C\label{majorization}
\end{align}
for $n$ large enough.\\

Assumption 2 guarantees two things: Firstly, it prevents a fast accumulation of starting points $X_j(0)$ around $x_n^*$, in particular implying a non-vanishing critical time $t_{\rm cr}(x_n^*)$. While the case of vanishing $t_{\rm cr}$ is  interesting as well and, say, edge universality has been shown in many typical situations for vanishing times e.g.~in \cite{BEY}, the formation of limits relies on a more delicate interplay between the initial configuration $\mu_n$ and the boundary point, which typically makes these situations less universal. To see this, note that in regions with high density of initial eigenvalues we will in general expect sine kernel correlations, not boundary correlations. The time until such sine kernel correlations can be observed strongly depends on $\mu_n$, see \cite[Theorems 1.2 and 1.3]{CNV1} and \cite{EYbook} and references therein, showing that in the case of vanishing $t_{\rm cr}$ less universality is to be expected.

Secondly, Assumption 2 forbids initial eigenvalues to be too close to $x_n^*$, more precisely we have an asymptotic gap of mesoscopic size around \(x_n^*\): For any \(\delta >\frac{1}{5}\) we have for all \(n\) large enough
	\[\mu_n \left(\left[x_n^* -n^{-\delta}, x_n^* + n^{-\delta}\right]\right) = 0.\]
This is important as too close initial eigenvalues would turn into outliers  close to $x_n^*(t_{\rm cr})$ at the critical time. Such outliers  would lead to perturbations of the universal kernels; we will  elaborate further below.\\

We can now formulate our main theorem in a nutshell: Define 
\begin{align}
	I_n:=I_n(x_n^*):=n^{1/4}\int\frac{d\mu_n(s)}{(x_n^*-s)^3}.\label{integral}
\end{align}
Then under Assumptions 1 and 2, the correlation kernel of NIBM (to be defined below), locally rescaled in the vicinity of the boundary points shows 
\begin{itemize}\item Airy kernel universality, if \(\vert I_n \vert \to \infty\), as \(n\to\infty\),
	\item Pearcey kernel universality, if \(\vert I_n \vert \to 0\), as \(n\to\infty\),
	\item transition kernel universality, if \(0< C_1 \leq \vert I_n\vert \leq C_2 < \infty \) for some $0<C_1<C_2$.
\end{itemize}
This result completely characterizes the boundary behavior of NIBM for non-vanishing times and in the absence of outliers. It establishes the transition kernel as the link between Pearcey and Airy universality. \\

Let us now define the more technical terms to give a proper statement of our main result. 
We first note that we will, as is usual, study NIBM not as a process of $n$ distinguishable particles $(X_1(t),\dots,X_n(t))_{t\geq0}$ in $\R^n$ but instead as the time-dependent \textit{point process} $\lb\sum_{j=1}^n\d_{X_j(t)}\rb_{t\geq0}$ of $n$ indistinguishable particles. Passing to the point process reveals the determinantality of NIBM, meaning that the space-time correlation functions of $\lb\sum_{j=1}^n\d_{X_j(t)}\rb_{t\geq0}$ can be written as determinants of matrices built from a single kernel. To make this precise, take $0<t_1<\dots<t_k$ and integers $1\leq m_1,\dots,m_k\leq n$. A function $\rho^\n_{t_1,\dots,t_k}:\R^{\sum_{j=1}^k m_j}\to\R$ is called \textit{space-time correlation function} if for any test function $f:\R^{\sum_{j=1}^k m_j}\to\R$ the expected statistic
\begin{align}
	&\E \sum_{j=1}^k\sum_{1\leq i^j_1\not=\dots\not=i^j_{m_j}\leq n}f\lb X_{i^1_1}(t_1),\dots,X_{i^1_{m_1}}(t_1),\dots,X_{i^k_1}(t_k),\dots,X_{i^k_{m_k}}(t_k)\rb\label{def:corr_fct1}
\end{align}
can be written as the integral
\begin{align}
	\int_{\R^{\sum_{j=1}^k m_j}}f\lb \x^{(1)}_{m_1},\dots,\x^{(k)}_{m_k}\rb\rho^\n_{t_1,\dots,t_k}\lb \x^{(1)}_{m_1},\dots,\x^{(k)}_{m_k}\rb d\x^{(1)}_{m_1}\dots d\x^{(k)}_{m_k},\label{def:corr_fct2}
\end{align}
where $\x^{(j)}_{m_j}:=(x^{(j)}_1,\dots,x^{(j)}_{m_j})$. The correlation functions of NIBM can be obtained as symmetrized marginals and can be written as (see e.g.~\cite{TracyWidom2,KatoriTanemura})
\begin{align}
	\rho^\n_{t_1,\dots,t_k}\lb\x^{(1)}_{m_1},\dots,\x^{(k)}_{m_k}\rb=\det\lb \left[K_{n,t_i,t_j}\lb x_p^{(i)},x_q^{(j)}\rb\right]_{\substack{1\leq p\leq m_i\\1\leq q\leq m_j}}\rb_{1\leq i,j\leq k},\label{determinantal_relations}
\end{align}
the matrix on the r.h.s., being of size $\sum_{j=1}^k m_j\times\sum_{j=1}^k m_j$ and $((s,x),(t,y))\mapsto K_{n,s,t}(x,y)$ is the \textit{correlation kernel} 
\begin{multline}\label{def:Kn}
	K_{n,s,t}(x,y)\\
	:=\frac{n}{(2\pi i)^2 \sqrt{st}} \int_{x_0+i\R} dz \int_{\Gamma} dw ~ \frac{\exp\lb\frac{n}{2t} \left[(z-y)^2 +2t g_{\mu_n}(z)\right]-\frac{n}{2s} \left[(w-x)^2 +2s g_{\mu_n}(w)\right]\rb}{z-w}\\
	-1(s>t)\frac{\sqrt{n}}{\sqrt{2\pi(s-t)}}\exp\lb-\frac{n}{2(s-t)}(x-y)^2\rb.
\end{multline}
Here $g_{\mu_n}$ is the log-transform 
\begin{align}
	g_{\mu_n}(z) := \int \log(z-s) d\mu_n(s),\label{def:g}
\end{align}
where we use the principal branch of the logarithm, $x_0+i\R$ is a vertical line and
\(\Gamma\) is a closed curve with positive orientation encircling $X_1(0),\dots,X_n(0)$ which has no intersection with the line $x_0+i\R$.

We note that NIBM are (viewed as a point process) completely characterized by the correlation kernel \eqref{def:Kn}, therefore we will state our main results in terms of the correlation kernel.\\

Interestingly, all decisive quantities can be expressed through the Stieltjes transform
\[G_{\mu_n} (z):=\int \frac{d\mu_n(s)}{z-s},\quad z\in\C\setminus\supp(\mu_n)\]
of $\mu_n$ and its derivatives at $x_n^*$
\begin{align}
	G^{(j)}_n:=(-1)^j j! \int\frac{\mu_n(ds)}{(x_n^*-s)^{j+1}},\quad j=0,1,2,3.\label{def:G_j}
\end{align}
As a consequence of Assumptions 1 and 2, \(\left( G_n^{(0)} \right)_n, \left(-G_n^{(1)}\right)_n, \left( G_n^{(2)} \right)_n, \left(-G_n^{(3)}\right)_n\) are bounded sequences of non-negative numbers, while  \(\left(-G_n^{(1)}\right)_n,\left(-G_n^{(3)}\right)_n\) are also bounded away from zero.

We recall from \eqref{linear_evolution} the linear evolution
\begin{align}
	x_n^*(t)=x_n^*+t\int\frac{\mu_n(ds)}{x_n^*-s}=x_n^*+tG^\0_n,
\end{align}
and from \eqref{def:t_cr} the critical time
\begin{align}
	t_{\rm cr}=t_{\rm cr}(x_n^*)=\lb\int\frac{d\mu_n(s)}{(x_n^*-s)^2}\rb^{-1} =-\frac{1}{G_n^{(1)}}.
\end{align}
The crucial quantity $I_n$ from \eqref{integral}, determining the limiting correlations, is
\begin{align}
	I_n=n^{1/4}\int\frac{d\mu_n(s)}{(x_n^*-s)^3}=\frac{n^{1/4}}{2}G^{(2)}_n.
\end{align}
We will only consider the case of asymptotically non-negative $I_n$ below, the non-positive case following by obvious modifications.

We need two sets of scaling coefficients, depending on whether $I_n$ is bounded or not. As we will see later, the bounded case corresponds to boundary points close to a merging, the divergent case to points at the edge. With that in mind we define (``E'' and ``M'' standing for edge and merging, respectively) for $G_n^{(2)}>0$
\begin{equation}\label{def:tnS}c_2:=c_2(x_n^*):=\frac1{t_{\rm cr}}\lb \frac{G_{n}^{(2)}}2\rb^{-1/3},\qquad
	t_{n}^{\rm E}(\t):=t_{n}^{\rm E}(\t,x_n^*):=t_{\rm cr}+\frac{2\tau}{c_2^2  n^{1/3}},
\end{equation}
and
\begin{equation}\label{def:tnW} c_3:=c_3(x_n^*) := \frac{1}{t_{\rm cr}} \left(-\frac{G_n^{(3)}}{6}\right)^{-1/4},\qquad 
	t_{n}^{\rm M}(\t):=t_{n}^{\rm M}(\t,x_n^*):=t_{\rm cr}+\frac{\tau}{c_3^2  n^{1/2}}.
\end{equation}
The constants $c_2$ and $c_3$ are related to the shape of $\psi_{n,t_{\rm cr}}$ around $x_n^*(t_{\rm cr})$, see Remark \ref{remark_main} below. We remark that $c_2$ and $c_3$ in general depend on $n$, in particular $c_2$ can tend to infinity as $n\to\infty$.
Finally, we note that a conjugation of the kernel $K_n$ in \eqref{def:Kn} by setting for  functions $f_n$
\begin{align}
	\tilde K_{n,s,t}(x,y):=K_{n,s,t}(x,y)\exp({f_n(t,y)-f_n(s,x)}),\label{conjugation}
\end{align}
does not change the determinant \eqref{determinantal_relations} and thus $\tilde K_n$ generates the same determinantal process as $K_n$. Choosing the particular gauge factors 
\begin{align}\label{gauge_factor}
	f_n(s,x):=-nG_n^{(0)}x+\frac{n\left(G_n^{(0)}\right)^{2}s}{2},
\end{align}
will lead us to a converging kernel allowing us to state many of our results in terms of the kernel directly instead of correlation functions.

The following is our main result in full detail. 
\begin{thm}\label{thrm_main}
	Let $(\mu_n)_{n\in\N}$ and $(x_n^*)_{n\in\N}$ be such that Assumptions 1 and 2 hold, and recall $I_n=I_n(x_n^*)$ from \eqref{integral}.
	\begin{enumerate}
		\item  If $I_n\to\infty$ as $n\to\infty$,  we have  
		\begin{align}
			&\frac{1}{c_2n^{2/3}}\tilde K_{n,t_{n}^{\rm E}(\t_1),t_{n}^{\rm E}(\t_2)}\left(x^{*}_n (t_n^{\rm E} (\t_1))+\frac{u}{c_2n^{2/3}}, x^{*}_n( t_n^{\rm E} (\t_2))+\frac{v}{c_2n^{2/3}}\right)\\
			&=\mathbb K^{\rm Ai}_{\t_1,\t_2}(u,v)+o\lb e^{- \s(u+v)}\rb,\label{Airy-convergence}
		\end{align}
		as \(n\to\infty\) for any $\s \geq0$, uniform with respect to $\tau_1,\t_2$ belonging to an arbitrary compact subset of $\mathbb R$. If \(I_n \gg n^{\gamma}\) for some \(\gamma >0\), the error term can be improved to \(\O\left(e^{- \s(u+v)} n^{-\e}\right)\) and is uniform with respect to $u,v\in [-M, M n^{\e}]$ for some \(\e >0\) and an arbitrary constant \(M>0\). If \(1\ll I_n \ll n^{\gamma}\) for all \(\gamma >0\), the error holds uniform in $u,v\in [-M, M s_n]$, where \(s_n\) is an arbitrary sequence with \(1\ll s_n \ll \left(I_n\right)^{1/3}\).

		\item 
		If $I_n\to0$ as $n\to\infty$, we have 
		\begin{align}
			&\frac{1}{c_3 n^{3/4}}\tilde K_{n,t^{\rm M}_{n}(\tau_1),t^{\rm M}_{n}(\tau_2)} \left(x_n^{*}(t_n^{\rm M} (\t_1))+\frac{u}{c_3 n^{3/4}}, x^{*}_n(t_n^{\rm M} (\t_2))+\frac{v}{c_3 n^{3/4}}\right)\\
			&=\mathbb K^{\rm P}_{\t_1,\t_2}(u,v)+\O(I_n)+\O\left(n^{-\epsilon}\right),\label{Pearcey-convergence}
		\end{align}
		as \(n\to\infty\), for $0<\e< \frac{1}{24}$. The convergence is uniform for $u,v,\tau_1,\t_2$ in arbitrary compact subsets of $\mathbb R$.
		
		\item 
		If $0<C_1\leq I_n<C_2<\infty$ for all $n$ large enough and some $n$-independent constants $C_1<C_2$, then with
		\begin{align}\label{def:a_n}
			a_n:=
			\lb-G_n^{(3)}\rb^{-3/4}I_n
		\end{align} 
		we have for every $0<\e< \frac{1}{24}$ 
		\begin{align}
			&\frac{1}{c_3 n^{3/4}}\tilde K_{n,t^{\rm M}_{n}(\tau_1),t^{\rm M}_{n}(\tau_2)} \left(x_n^{*}(t_n^{\rm M} (\t_1))+\frac{u}{c_3 n^{3/4}}, x^{*}_n(t_n^{\rm M} (\t_2))+\frac{v}{c_3 n^{3/4}}\right)\\
			&=\mathbb K^{a_n}_{\t_1,\t_2}(u,v)+\O(n^{-\e}),\label{wa-convergence}
		\end{align}
		as \(n\to\infty\).
		The convergence is uniform for $u,v,\tau_1,\t_2$ in arbitrary compact subsets of $\mathbb R$.
	\end{enumerate}
\end{thm}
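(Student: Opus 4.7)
The argument is a steepest descent analysis of the double contour integral \eqref{def:Kn}. Writing the exponent as $n[\Phi_t(z;y)-\Phi_s(w;x)]$ with
\[
\Phi_t(z;y):=\frac{(z-y)^2}{2t}+g_{\mu_n}(z),
\]
the critical points are zeros of $\Phi'_t(z;y)=(z-y)/t+G_{\mu_n}(z)$; for $s=t=t_{\rm cr}$ and $x=y=x_n^*(t_{\rm cr})$ one checks directly from \eqref{def:G_j} that $z=x_n^*$ is a saddle, and that
\[
\Phi''_{t_{\rm cr}}(x_n^*;\cdot)=\tfrac{1}{t_{\rm cr}}+G_n^{(1)}=0,\qquad \Phi'''_{t_{\rm cr}}(x_n^*;\cdot)=G_n^{(2)},\qquad \Phi^{(4)}_{t_{\rm cr}}(x_n^*;\cdot)=G_n^{(3)}.
\]
Hence the saddle is degenerate of order at least three, and of order four exactly when $G_n^{(2)}\to 0$. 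Since $I_n=\tfrac{1}{2}n^{1/4}G_n^{(2)}$ measures, up to a bounded factor, the weight of the cubic relative to the quartic contribution on the natural Pearcey scale $z-x_n^*\sim n^{-1/4}$, it is natural that $I_n$ controls which of Airy, Pearcey or transition phase survives in the limit.

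In each regime I would substitute $z=x_n^*+\tilde z/(cn^\alpha)$ and $w=x_n^*+\tilde w/(cn^\alpha)$ with $(c,\alpha)=(c_3,1/4)$ for (2)--(3) and $(c_2,1/3)$ for (1), together with the prescribed rescalings of the temporal and spatial arguments. Expanding $g_{\mu_n}$ around $x_n^*$ to fourth order then reproduces, in the exponent, exactly the polynomial phase appearing in $\mathbb K^{\rm P}$, $\mathbb K^{a_n}$ or $\mathbb K^{\rm Ai}$, provided the constant and linear pieces are absorbed by the gauge factor $f_n$ of \eqref{gauge_factor}. The key estimate for the error comes from Assumption 2: the bound $\int d\mu_n(s)/|x_n^*-s|^5\leq C$ controls the fifth-order remainder of $g_{\mu_n}$ on an $n^{-\delta}$-neighbourhood of $x_n^*$ (any $\delta>1/5$), yielding a remainder of size $O(n^{-\e})$. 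The mesoscopic eigenvalue gap derived from Assumption 2 further ensures that $g_{\mu_n}$ is analytic on this neighbourhood, so that $\Gamma$ can be threaded through $x_n^*$ unimpeded.

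The most delicate step is the global contour deformation and exponential-decay bound. The plan is to deform $x_0+i\R$ into a vertical contour through $x_n^*$ and $\Gamma$ into a closed curve that passes through $x_n^*$ along the X-shape of $\Gamma^{\rm P}$ (for cases (2)--(3)) or along the wedge shape of $\Gamma^{\rm Ai}$ (for case (1)), and to establish a bound of the form $\Re[\Phi_t(w)-\Phi_s(z)]\gtrsim |w-x_n^*|^\alpha+|z-x_n^*|^\alpha$ uniformly in $n$, with $\alpha\in\{3,4\}$. This is the main obstacle, because $\mu_n$ is not assumed to converge and no global asymptotics of $g_{\mu_n}$ are available; all control must come from Assumptions 1 and 2 alone. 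Here Assumption 1 prevents pathological behaviour of $g_{\mu_n}$ on a fixed compact range, while a careful choice of the rays of the deformed contours makes them locally tangent to steepest-descent directions of the dominant local phase.

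Once these uniform estimates are in hand, dominated convergence on the rescaled integrals yields the three limits with the stated error rates, and the Gaussian subtraction in \eqref{def:Kn} matches directly after completing the square under the same substitution. Case (1) is then obtained in two equivalent ways: either by a direct steepest descent on the Airy scale, or by first deriving the transition-kernel expansion $\mathbb K^{a_n}$ on the Pearcey scale with $a_n\to\infty$ and then invoking Proposition \ref{prop:interpolation} after the rescaling $\tilde z\mapsto a_n^{1/3}\tilde z$. The subdivision $1\ll I_n\ll n^\gamma$ versus $I_n\gg n^\gamma$ in the error rate of (1) reflects the rate at which $a_n$ diverges and the width of the $(u,v)$-window on which the rescaled transition-kernel bound produces an $O(n^{-\e})$ remainder.
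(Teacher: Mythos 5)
Your local analysis is exactly the paper's: the degenerate saddle at $x_n^*$ with $\tfrac1{t_{\rm cr}}+G_n^{(1)}=0$, the identification of $I_n$ as the cubic-versus-quartic weight on the $n^{-1/4}$ scale, the rescalings, the gauge absorption of the constant and linear terms, and the use of Assumption 2 to control the fifth-order Taylor remainder all match. The genuine gap is precisely the step you yourself flag as ``the main obstacle'' and then do not resolve: the global contour choice and decay estimate. The bound you propose, $\Re[\Phi_t(w)-\Phi_s(z)]\gtrsim |w-x_n^*|^{\alpha}+|z-x_n^*|^{\alpha}$ uniformly in $n$ along fixed rays, is not what the paper proves and is not available in this generality (with no convergence of $\mu_n$, the phase away from $x_n^*$ can behave arbitrarily along a straight ray, and $\Gamma$ must in any case encircle all starting points, some of which may escape to infinity). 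The paper's mechanism is different: the global part of $\Gamma$ is taken to be Biane's curve $y_{t,\mu_n}$ itself, which is an exact ascent path for $w\mapsto\phi_{n,\tau_1}(w,u)$ by the monotonicity of the homeomorphism $H(\a)=\a+iy_{t,\mu_n}(\a)+tG_{\mu_n}(\a+iy_{t,\mu_n}(\a))$; the vertical line is a descent path for the $z$-phase by the sign of $\int d\mu_n(s)/((\a-s)^2+\b^2)-1/t$ relative to $y_{t,\mu_n}(\a)$; and the decay on the outer contours is then inherited entirely from the local expansion at the points where the curve enters and leaves the relevant disk. Locating those entry and exit points (at angles near $0$ and $2\pi/3$ on the $D_n^{1/3}$ scale, near $\pi/3$ and $2\pi/3$ on the $D_n^{1/4}$ scale) is the content of Proposition \ref{proposition:density}, which has no counterpart in your plan; without it the local contour pieces cannot be glued to the global ascent path. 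One also needs an a priori polynomial bound on the arc length of Biane's curve (the paper proves $\O(n^4)$) and sub-Gaussian decay of the $z$-phase for $|\Im z|\geq n$ to integrate over the unbounded contour.

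Two further points. First, in the slow Airy regime $1\ll I_n\ll n^{\gamma}$ the main contribution lives on the scale $r_n$ with $(nG_n^{(2)})^{-1/3}\ll r_n\ll n^{-1/4}$ (where the cubic dominates), but the geometry of Biane's curve is only controlled on the scale $n^{-1/4+\e}$ (where the quartic dominates and the curve enters at angle $\approx\pi/3$, not $\approx 0$); the connecting segment between the two scales, on which $\Re\,\w^3$ changes sign, requires a separate argument using negativity of $\Re\,\w^4$. Your plan does not address this two-scale matching. Second, your proposed shortcut of deriving case (1) from the Pearcey-scale transition-kernel expansion with $a_n\to\infty$ plus Proposition \ref{prop:interpolation} does not work as stated: the $\O(n^{-\e})$ error in Proposition \ref{prop:weakasy} relies on boundedness of $a_n$ (both for the remainder $h_n$ and for the $L^1$-bound of the limiting integrand), and Proposition \ref{proposition:density}(2) itself requires $n^{1/4}G_n^{(2)}\ll n^{\gamma}$, so the Airy case genuinely needs the separate analysis on the $(nG_n^{(2)})^{-1/3}$ scale that the paper carries out in Section \ref{Sec:proofAiry}.
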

\begin{remark}\label{remark_main}\noindent
	\begin{enumerate}
		\item The generality of the results in terms of arbitrary and $n$-dependent $x_n^*$ and $\mu_n$ is remarkable: neither $\mu_n$ nor $x_n^*$ are required to converge as $n\to\infty$. This is a very strong form of universality in the starting points of NIBM.
		\item We believe this result to be optimal up to the exponent 5 in \eqref{majorization}, and refinements of error terms. The exponent 5 can be improved to 4+$\e$ in the Pearcey case and up to 3$+\e$ in the Airy case with $I_n\gg n^\g$ by another more technical argument but for the sake of readability of the proof of Theorem \ref{thrm_main} we omit this further complication and use one condition that fits all cases. It is clear for $t_{\rm cr}$ to be positive, the exponent can not be smaller than 2. If the exponent is between 2 and 3, then the quantities $G_n^\2$ and  $G_n^\3$ might not be finite, which are in turn needed for defining $c_2$, $c_3$ and $I_n$. 
		\item The critical space-time points as well as the rescalings of the kernel in Theorem \ref{thrm_main} are described in terms of the initial objects $\mu_n$ and $x_n^*$: An initial point $x_n^*$ is picked and the time $t=t_{\rm cr}$ is then determined by $x_n^*$. In the literature it is more customary to first fix the time $t>0$ and then to look at an edge or merging point $x$ for this fixed time. It is also customary to express the rescaling factors $c_2$ and $c_3$ in terms of the density $\psi_{n,t}$ of $\mu_n\boxplus\s_t$ around $x$. For instance, to have Airy universality at a given edge point $(x,t)$, one usually needs to have a square root vanishing at $x$ of the form $\psi_{n,t}(y)\sim c_2'\sqrt{x-y}$ for all $y<x$ small enough and some $c_2'>0$. Biane has shown in \cite[p.~715ff]{Biane} that this pre-factor $c_2'$ can be expressed in terms of our $c_2$, similarly for $c_3$, and thus our rescaling is compatible with the rescalings used in the literature. However, it is worth noting that computing $c_2$ in \eqref{def:tnS} and $c_3$ in \eqref{def:tnW} is simpler (see \eqref{def:G_j}) than e.g.~determining the pre-factor of the square root vanishing of the density of the free convolution $\mu_n\boxplus\s_t$ at some point $x$.
	\end{enumerate}
\end{remark}

\subsection*{Interpretation of Theorem \ref{thrm_main} in terms of edge and merging points}
Theorem \ref{thrm_main} identifies the universality classes based on the behavior of the quantity \(I_n\) in \eqref{integral} without specifying the nature of the considered boundary points.
Let us call a boundary point $(x,t)$ with $t>0$ a \emph{merging point} with respect to $\mu_n\boxplus\s_t$ if for the density $\psi_{n,t}$ of $\mu_n\boxplus\s_t$ we have $\psi_{n,t}(x)=0$ and $\psi_{n,t}(x')>0$ for all $x'\not=x$ in some neighborhood of $x$ which is allowed to depend on $n$. A boundary point $(x,t)$ with $t>0$ that is not a merging point will be called \emph{edge point} with respect to $\mu_n\boxplus\s_t$. For edge points we have $\psi_{n,t}(x)=0$ and $\psi_{n,t}(x')>0$ either for all $x'>x$ small enough, or for all $x'<x$ small enough, but not for both. It is easy to relate edge and merging points to the three cases of Theorem \ref{thrm_main} via the following geometric argument: 
For $x_n^*$ not being one of the starting points, $I_n(x_n^*)>0$  is equivalent to
\begin{align}
	\frac{d}{dx'}t_{\rm cr}(x')>0
\end{align} 
for $x'$ close to $x_n^*$. This means that starting the evolution from $x'>x_n^*$ small enough leads to a boundary point with a larger critical time than starting from $x_n^*$. This is equivalent to $\psi_{n,t_{\rm cr}}(x)=0$ for $x>x_n^*(t_{\rm cr})$ small enough. As $(x_n^*(t_{\rm cr}),t_{\rm cr})$ is a boundary point and $\psi_{n,t_{\rm cr}}$ is continuous, we must then have $\psi_{n,t_{\rm cr}}(x)>0$ for $x<x_n^*(t_{\rm cr})$ small enough, hence $(x_n^*(t_{\rm cr}),t_{\rm cr})$ is an edge point at the upper edge of some bulk.\\
\indent Similar arguments show that $I_n(x_n^*)=0$ correspond to a merging point $(x_n^*(t_{\rm cr}),t_{\rm cr})$. We can determine the unique $x_n^*$ leading to a merging point as follows: it lies in an interval $(X_j(0),X_{j+1}(0))$ for some $j$ and fulfills
\begin{align}
	x_n^*=\argmax\limits_{x'\in(X_j(0),X_{j+1}(0))}t_{\rm cr}(x')=\argmax\limits_{x'\in(X_j(0),X_{j+1}(0))}\lb\int\frac{\mu_n(ds)}{(x'-s)^2}\rb^{-1}.\label{define-argmax}
\end{align}
This is useful and intuitive as it means that a merging point is the boundary point of a space-time gap in the spectrum that extends furthest in time.

 Starting from an $x_n^*$ leading to a merging point, i.e.~with \eqref{define-argmax}, we can describe the transition from Pearcey to Airy statistics as follows: Defining for $c_n>0$
\begin{align}
	\bar{x}_n^*:=x_n^*-\frac{c_n}{n^{1/4}},
\end{align}
and assuming $\bar x_n^*\in(X_j(0),X_{j+1}(0))$ such that Assumption 2 holds for $\bar x_n^*$, we observe Pearcey statistics for $c_n\to0$, Airy statistics for $c_n\to\infty$ and transition kernel statistics for $c_n$ bounded away from 0 and infinity. It is straightforward to check that an initial deviation of order $n^{-1/4}$ from the point $x_n^*$ results in a spatial deviation of order $\O(G_{\mu_n}(x_n^*)n^{-1/2}+n^{-3/4})$ from the merging point, and a temporal deviation of order $\O(n^{-1/2})$. The quantity $G_{\mu_n}(x_n^*)$ is zero if $\mu_n$ is symmetric around $x_n^*$ but will in general be non-zero, e.g.~for the merging of the upper two bulks in Figure \ref{figure1}, showing that the transition from Pearcey to Airy statistics in general takes place at edge points that are further away from the merging point than the usual $n^{-3/4}$ Pearcey scaling.\\

The transition described by Theorem \ref{thrm_main} is a transition from the extended Pearcey kernel to the extended Airy kernel. A transition in the opposite direction, i.e.~from Airy to Pearcey, has been found in \cite{ADvM,AvM}. It can be achieved by arranging for $r$ outliers at an edge of the spectrum of NIBM in a precisely defined way. This leads to the $r$-Airy kernel which is a perturbation of the extended Airy kernel and has been shown to interpolate from the Airy kernel ($r=0$ outliers) to the Pearcey kernel $(r\to\infty$ outliers) \cite{ADvM,AFvM}. Note that due to Airy and Pearcey kernels emerging from different scalings of NIBM, it can be expected that a transition from one kernel to the other can only be observed sending an interpolating parameter to infinity, and that a transition from Airy to Pearcey will not be the same as a transition from Pearcey to Airy. The  transition from the Pearcey process to the Airy line ensemble has been studied in \cite{ACvM} and \cite{BC}. In \cite{ACvM} an asymptotic relation between the Pearcey and the Airy kernel has been found and used to study gap probabilities. This has been extended in \cite{BC} via a novel Riemann-Hilbert problem to show that the probability of a large gap around the merging for the Pearcey process asymptotically factorizes into two Airy line ensemble gap  probabilities.   Interestingly, these results do not involve an interpolating kernel like the $r$-Airy kernel for the Airy-to-Pearcey transition. Essentially it is shown in \cite{ACvM} with a clever PDE approach that (in a different parameterization)
\begin{align}
	&\lim_{a\to\infty}a\exp\lb{\frac1{27}a^{8/3}(\t_1-\t_2)+\frac{a^{4/3}}3(u-v)}\rb\\
	&\times\mathbb K^{\rm P}_{2a^{2/3}\t_1,2a^{2/3}\t_2}\lb a^{1/3}u+\frac23a^{5/3}\t_1-2\lb\frac a3\rb^3,a^{1/3}v+\frac23a^{5/3}\t_2-2\lb\frac a3\rb^3\rb\\
	&=\mathbb K^{\rm Ai}_{\t_1,\t_2}(u,v).\label{asymptoticrelation}
\end{align}
The meaning of \eqref{asymptoticrelation} is not obvious, however, utilizing the transition kernel $\mathbb K^a$ we can explain \eqref{asymptoticrelation} as follows. The difference between the transition kernel \eqref{eq:transitionkernel} and the Pearcey kernel \eqref{eq:Pearceykernel} is the presence of cubic terms in the phase function of the former kernel. As is well known, for any polynomial $p(x)$ of degree $k$ it is always possible to find an $h\in\R$ such that the $x^{k-1}$ term in $p(x+h)$  has the coefficient 0 (being a simple instance of the Tschirnhaus transformation). Applying this to the polynomial in the phase function of the transition kernel and absorbing the resulting changes in the quadratic and lower order terms by modifying all quantities $\t_1,\t_2,u,v$ shows that the transition kernel can be masked as the Pearcey kernel with changed parameters and a conjugation,
\begin{align}\label{eq:conn}
	\mathbb K^{a}_{\tau_1, \tau_2}(u,v)=e^{\frac{1}{2} \left(\frac{a}{3}\right)^2(\tau_1 -\tau_2) + \frac{a}{3} (u-v)}~ \mathbb{K}^{\rm P}_{\tau_1 -3\left(\frac{a}{3}\right)^2, \tau_2 -3\left(\frac{a}{3}\right)^2}\left(u+\frac{a}{3}\tau_1 -2\left(\frac{a}{3}\right)^3, v+\frac{a}{3}\tau_2 -2\left(\frac{a}{3}\right)^3 \right).
\end{align}
Combining \eqref{eq:conn} with the interpolation property \eqref{statement_prop1} then gives the asymptotic relation \eqref{asymptoticrelation}.  It is worth mentioning that the expression \eqref{eq:conn} also shows that the \emph{transition process} generated by the transition kernel has the same regularity properties as the Pearcey process.

\subsection*{Correlations around the boundary of $\mu\boxplus\s_t$}
In our results above we considered the boundary of the support of $\mu_n\boxplus\s_t$ as a deterministic shape around which to study correlations of NIBM. We did not make any assumption about convergence of $\mu_n$ as $n\to\infty$. If however the sequence $(\mu_n)_{n\geq1}$ does have a weak limit $\mu$ as $n\to\infty$, then it is a natural question to ask about the correlations of NIBM around the boundary of the support of $\mu\boxplus\s_t$, which is in this case the weak limit of $\mu_n\boxplus\s_t$ as $n\to\infty$.

There are some interesting differences between using $\mu_n\boxplus\s_t$ and using $\mu\boxplus\s_t$ for a deterministic shape. 
For example, in the $\mu_n\boxplus\s_t$ setting, bulks consisting of $o(n)$ eigenvalues can be visible and their edge and merging statistics can be studied, while such bulks are not visible using $\mu\boxplus\s_t$. 
Another notable difference is that $\mu\boxplus\s_t$ has a richer  structure than $\mu_n\boxplus\s_t$ in terms of boundary of support and behavior of the density at boundary points.  For instance, it is easy to show that the boundary of $\mu_n\boxplus\s_t$ always evolves non-linearly in time, and each boundary point of $\mu_n\boxplus\s_t$ for $t>0$ is  \emph{critical} in the sense that it can be written as $(x_n^*(t_{\rm cr}),t_{\rm cr})$ for some $x_n^*$ (see the arguments following \eqref{def:y} below). In contrast to this, the boundary of $\mu\boxplus\s_t$ can evolve linearly in time and in this case there are \textit{pre-critical} boundary points. To see this, we note that similarly to above we can describe any boundary point of the support of $\mu\boxplus\s_t$ as $(x_\mu^*(t),t)$ for some $x^*\in\R$ and $t\leq t_{\rm cr,\mu}$, where
\begin{align}
	x_\mu^*(t):=x^*+t\int\frac{d\mu(s)}{x^*-s},\qquad t_{\rm cr,\mu}(x^*):=\lb\int\frac{d\mu(s)}{(x^*-s)^2}\rb^{-1},\label{linear_evolution2}
\end{align}
provided the first integral exists. Note that we use the subscript $\mu$ to distinguish this evolution and critical time from the previous evolution \eqref{linear_evolution} and critical time \eqref{def:t_cr}. Note also that we consider $x^*$ as being solely determined by $\mu$ and thus $n$-independent.
Now let $x^*$ be a boundary point of the support of $\mu$ with
\begin{align}
	\int\frac{d\mu(s)}{(x^*-s)^2}<\infty,\label{sub-critical}
\end{align}
i.e.~$t_{\rm cr,\mu}(x^*)>0$.
Then $x^*$ evolves linearly as a boundary point of $\mu\boxplus\s_t$ up to the critical time. Any point $(x_\mu^*(t),t)$ with $t<t_{\rm cr,\mu}(x^*)$ is then pre-critical in the sense that it cannot be written as $(\tilde x_{\mu}(t_{\rm cr,\mu}(\tilde x)),t_{\rm cr,\mu}(\tilde x))$ for any $\tilde x$.  
Moreover, a point $x^*$ with \eqref{sub-critical} may even be an interior point of the spectrum, like $x^*=0$ for $\mu$ with density $\psi(x)=\frac8{15\sqrt\pi}x^6e^{-x^2}$, $x\in\R$. It is a feature of using $\mu\boxplus\s_t$ that special points $x^*$ can be easily identified and followed in time until they become truly part of a bulk, for example for the $x^*=0$ above for $t_{\rm cr,\mu}=\frac52$. 

The behavior of the density $\psi_t$ of $\mu\boxplus\s_t$ at boundary points is also richer than that of $\psi_{n,t}$. 
It has been shown in \cite{AEK17,AEK20} that the density $\psi_{n,t}$ of $\mu_n\boxplus\s_t$ has a square root vanishing at edge points and a cubic root vanishing at merging points, suggesting Airy and Pearcey universality at these boundary points. However, for pre-critical internal boundary points of $\mu\boxplus\s_t$ as in our example above, typically vanishing of order higher than 1 takes place while at merging points one may have square root vanishing from one side and a higher power vanishing from the other side \cite{ClaeysKuijlaarsLiechtyWang}. This leads to being able to observe Pearcey \emph{or} Airy correlations at a merging point (defined using $\mu\boxplus\s_t$) depending on the behavior of the density at that merging point \cite{CNV2}.

Finally, let us mention that computing all quantities $I_n$, $t_{\rm cr}$, $c_2,\, c_3$ approximately can be much easier using $\mu\boxplus\s_t$ than  $\mu_n\boxplus\s_t$. For example, in Figure \ref{fig:PlotBM} the starting points are chosen as  quantiles, slightly modified to fulfill Assumption 2, of a $\mu$ with a piecewise polynomial density. All quantities in terms of $\mu$ are explicitly computable and may be used as approximations of the harder to compute quantities in terms of $\mu_n$.

\begin{figure}[h]
	\begin{minipage}{0.48\linewidth}	
		\centering
		\includegraphics[trim=5mm 60mm 0mm 60mm, width=1.2\textwidth]{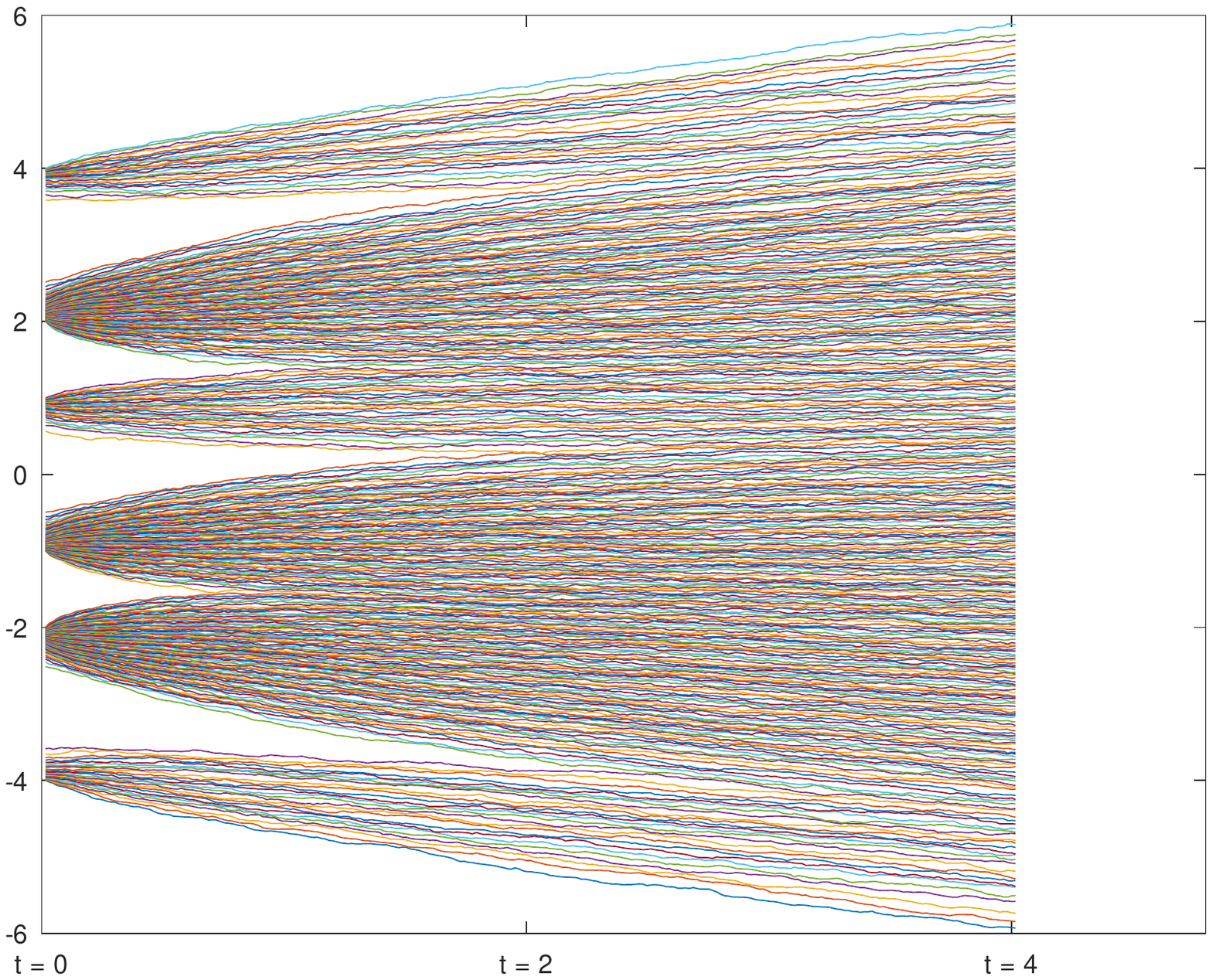}
	\end{minipage}
	\begin{minipage}{0.48\linewidth}
		\centering
		\includegraphics[trim=5mm 60mm -5mm 60mm, width=1.2\textwidth]{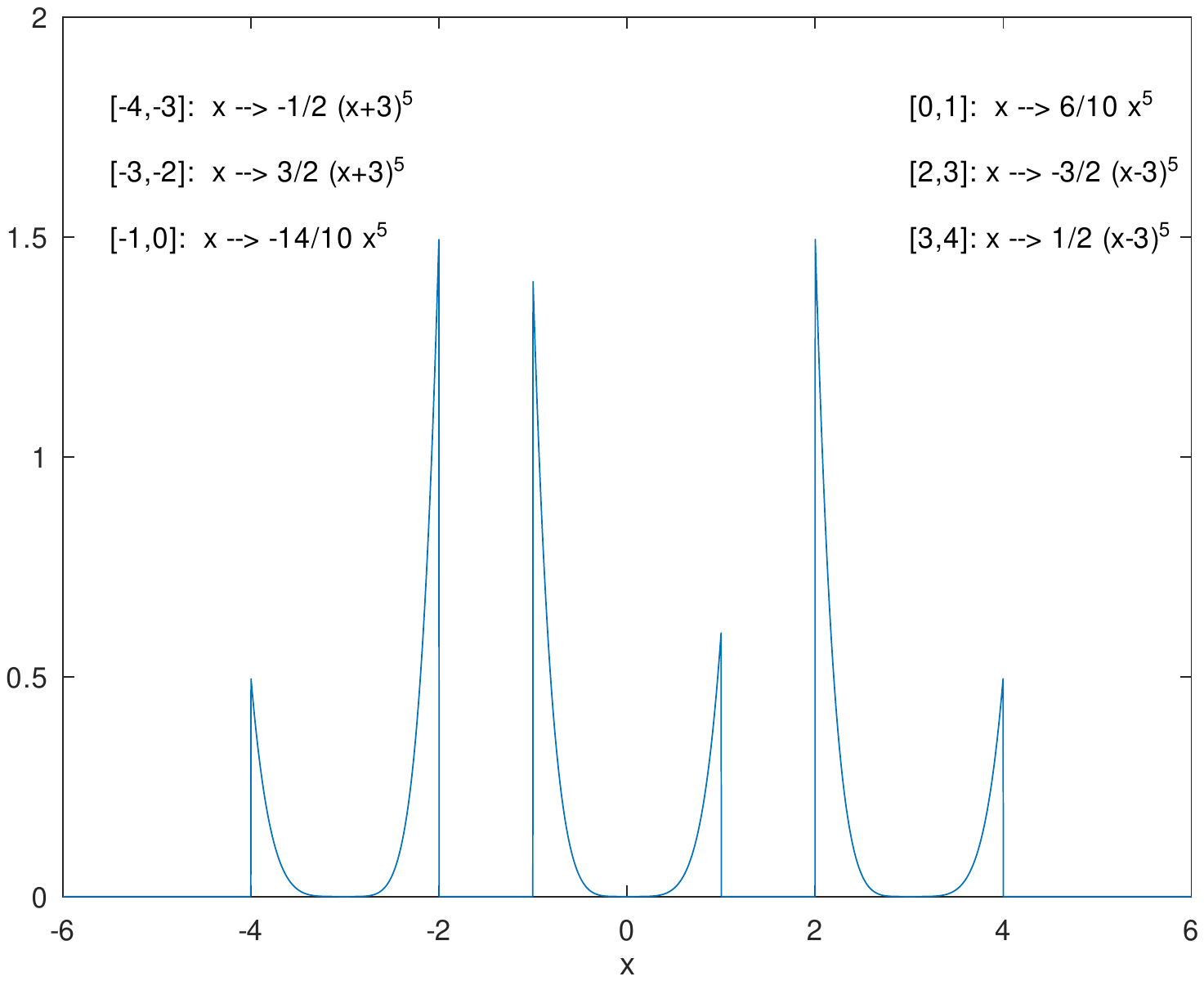}
	\end{minipage}
\caption{Sample paths of $n=300$ NIBM (l.h.s.) started from (modified) quantiles of a probability measure with density given on the r.h.s. For $x_n^*=0$ the evolution \eqref{linear_evolution} at critical time \(t_{\rm cr}(0)\approx 1.61\) leads to correlations descibed by the transition kernel. Starting at \(x_n^*=\pm 3\) leads to Airy correlations, whereas starting at suitable points \(\approx \pm 1.46\) leads to Pearcey correlations.}\label{fig:PlotBM}
\end{figure}

In the following we will give an analog of Theorem \ref{thrm_main} about correlations of NIBM around boundary points of $\mu\boxplus\s_t$. For Theorem \ref{thrm_main} the decisive quantity is $I_n(x_n^*)=n^{1/4}\int\frac{d\mu_n(s)}{(x_n^*-s)^3}$, leading to the three cases of $\lv I_n\rv\to\infty$, $\lv I_n\rv\to0$ and $\lv I_n\rv$ bounded away from 0 and infinity. In contrast to that, working with the limit $\mu\boxplus\s_t$ the correct analog to $I_n$ is
\begin{align}
	I_\mu:=I_\mu(x^*):=\int\frac{d\mu(s)}{(x^*-s)^3}.\label{integral2}
\end{align}
Assuming existence of $I_\mu$ and as above restricting to non-negative $I_\mu$, only two cases can occur: either $I_\mu>0$ or $I_\mu=0$. In contrast to Theorem \ref{thrm_main}, using an $n$-independent rescaling in terms of $\mu$ instead of $\mu_n$ requires a control over the rate of convergence of $\mu_n$ to $\mu$. To this end, let $d(\mu_n,\mu):=\sup_{x\in\R}\lv F_n(x)-F(x)\rv$ denote the Kolmogorov distance of $\mu_n$ and $\mu$, where $F_n$ and $F$ are the distribution functions of $\mu_n$ and $\mu$, respectively. Moreover, we define  
\begin{align}
	c_{2,\mu}:=\frac1{t_{\rm cr,\mu}}\lb \frac{G_{\mu}^{''}(x^*)}2\rb^{-1/3},\qquad
	t_{n,\mu}^{\rm E}(\t):=t_{\rm cr,\mu}(x^*)+\frac{2\tau}{c_{2,\mu}^2  n^{1/3}},\label{def:tnEmu}\\
	\label{def:tnWmu} c_{3,\mu} := \frac{1}{t_{\rm cr,\mu}} \left(-\frac{G_\mu^{'''}(x^*)}{6}\right)^{-1/4},\qquad 
	t_{n,\mu}^{\rm M}(\t):=t_{\rm cr,\mu}+\frac{\tau}{c_{3,\mu}^2  n^{1/2}},
\end{align}
assuming existence of these expressions.

\begin{cor}\label{cor_mu}\noindent
	\begin{enumerate}
		\item Suppose that $I_\mu(x^*)>0$, $\liminf\limits_{n\to\infty}\textup{dist}(x^*,\textup{supp}(\mu_n))>0$ and $d(\mu_n,\mu)=o(n^{-2/3})$, \(n\to\infty\). Then we have
		\begin{align}
			&\frac{1}{c_{2,\mu}n^{2/3}}\tilde K_{n,t_{n,\mu}^{\rm E}(\t_1),t_{n,\mu}^{\rm E}(\t_2)}\left(x_\mu^{*} (t_{n,\mu}^{\rm E} (\t_1))+\frac{u}{c_{2,\mu}n^{2/3}}, x_\mu^{*}( t_{n,\mu}^{\rm E} (\t_2))+\frac{v}{c_{2,\mu}n^{2/3}}\right)\\
			&=\mathbb K^{\rm Ai}_{\t_1,\t_2}(u,v)+o\lb 1\rb,\label{Airy-convergence_mu}
		\end{align}
		where the convergence is uniform for $\tau_1,\t_2$ and \(u,v\) in compacts of $\mathbb R$. 
		
		\item Suppose that $I_\mu(x^*)=0$, $\liminf\limits_{n\to\infty}\textup{dist}(x^*,\textup{supp}(\mu_n))>0$ and  $d(\mu_n,\mu)=o(n^{-3/4})$, \(n\to\infty\). Then we have
		\begin{align}
			&\frac{1}{c_{3,\mu} n^{3/4}}\tilde K_{n,t^{\rm M}_{n,\mu}(\tau_1),t^{\rm M}_{n,\mu}(\tau_2)} \left(x_\mu^{*}(t_{n,\mu}^{\rm M} (\t_1))+\frac{u}{c_{3,\mu} n^{3/4}}, x^{*}_\mu(t_{n,\mu}^{\rm M} (\t_2))+\frac{v}{c_{3,\mu} n^{3/4}}\right)\\
			&=\mathbb K^{\rm P}_{\t_1,\t_2}(u,v)+o(1),\label{wa-convergence_mu}
		\end{align}
		as \(n\to\infty\), where the convergence is uniform for $u,v,\tau_1,\t_2$ in arbitrary compact subsets of $\mathbb R$.
		\item Suppose that $I_\mu(x^*)=0$ with $\textup{dist}(x^*,\textup{supp}(\mu))>0$ and let $a\geq0$. Then there is a sequence $(\mu_n)_{n\geq1}$ converging weakly to $\mu$ such that
		\begin{align}
			&\frac{1}{c_{3,\mu} n^{3/4}}\tilde K_{n,t^{\rm M}_{n,\mu}(\tau_1),t^{\rm M}_{n,\mu}(\tau_2)} \left(x_\mu^{*}(t_{n,\mu}^{\rm M} (\t_1))+\frac{u}{c_{3,\mu} n^{3/4}}, x^{*}_\mu(t_{n,\mu}^{\rm M} (\t_2))+\frac{v}{c_{3,\mu} n^{3/4}}\right)\\
			&=\mathbb K^{a}_{\t_1,\t_2}(u+\t_1b,v+\t_2b)+o(1),\label{wa-convergence_mu2}
		\end{align}
		as \(n\to\infty\), where $b\in\R$ is a constant depending on the initial configurations $(\mu_n)_{n\geq1}$. Here the convergence is uniform for $u,v,\tau_1,\t_2$ in arbitrary compact subsets of $\mathbb R$.
	\end{enumerate}
\end{cor}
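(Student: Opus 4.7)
The plan is to reduce Corollary~\ref{cor_mu} to Theorem~\ref{thrm_main} applied with the choice $x_n^*:=x^*$ for every $n$, and to translate the $\mu_n$-based rescaling quantities $(c_2,c_3,t_{\rm cr},x_n^*(\cdot),a_n)$ into their $\mu$-based counterparts. The key input is a quantitative comparison of the derivatives $G_{\mu_n}^{(j)}(x^*)$ and $G_\mu^{(j)}(x^*)$. Since $x^*$ has positive asymptotic distance $\delta$ to $\textup{supp}(\mu_n)$ (respectively to $\textup{supp}(\mu)$), the kernels $s\mapsto 1/(x^*-s)^{j+1}$ and their first derivatives are bounded on $\{|s-x^*|\geq\delta\}$ and $L^1$ at $\pm\infty$; integrating by parts in $\int (d\mu_n-d\mu)(s)/(x^*-s)^{j+1}$, with the boundary pieces at $\pm\infty$ and $\pm\delta$ each controlled by $d(\mu_n,\mu)$, yields
\begin{equation*}
|G_{\mu_n}^{(j)}(x^*)-G_\mu^{(j)}(x^*)|\leq C_{\delta,j}\,d(\mu_n,\mu),\qquad j=0,1,2,3.
\end{equation*}
As a consequence, $|c_2-c_{2,\mu}|+|c_3-c_{3,\mu}|+|t_{\rm cr}(x^*)-t_{\rm cr,\mu}(x^*)|+\sup_{t\in K}|x_n^*(t)-x_\mu^*(t)|=O(d(\mu_n,\mu))$ uniformly on compacts; Assumptions~1 and~2 are automatic from $\mu$ and the distance condition.

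For part~(1), $I_\mu(x^*)>0$ forces $I_n=\tfrac{1}{2}n^{1/4}G_{\mu_n}^{(2)}(x^*)\sim\tfrac{1}{2}n^{1/4}G_\mu^{(2)}(x^*)\to\infty$, so Theorem~\ref{thrm_main}(1) applies. Writing the point $x_\mu^*(t_{n,\mu}^{\rm E}(\tau))+u/(c_{2,\mu}n^{2/3})$ in $\mu_n$-coordinates as $x_n^*(t_n^{\rm E}(\tau'))+u_n/(c_2 n^{2/3})$, a direct calculation gives
\begin{equation*}
u_n = (c_2/c_{2,\mu})u+c_2 n^{2/3}\bigl(x_\mu^*(t)-x_n^*(t)\bigr),\qquad
\tau' = (c_2/c_{2,\mu})^2\tau+\tfrac12 c_2^2 n^{1/3}\bigl(t_{\rm cr,\mu}-t_{\rm cr}\bigr),
\end{equation*}
so both residues are of order $O(n^{2/3}d(\mu_n,\mu))=o(1)$ under $d(\mu_n,\mu)=o(n^{-2/3})$, and continuity of $\mathbb K^{\rm Ai}$ concludes. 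Part~(2) proceeds identically: $I_\mu=0$ and the moment estimate give $I_n=O(n^{1/4}d(\mu_n,\mu))=o(n^{-1/2})\to 0$, so Theorem~\ref{thrm_main}(2) applies, and the change of variables is controlled at the sharper scale $n^{3/4}$, which is exactly what the hypothesis $d(\mu_n,\mu)=o(n^{-3/4})$ secures.

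Part~(3) is constructive. Fix $\kappa>0$ and choose three points $y_1,y_2,y_3\in\R\setminus(\textup{supp}(\mu)\cup\{x^*\})$, and put $\mu_n:=(1-\kappa n^{-1/4})\mu+\kappa n^{-1/4}\nu$ with $\nu=\sum_{i=1}^3 p_i\delta_{y_i}$ (quantised to $n$ equal-mass atoms, the quantisation error being $O(1/n)$). The five parameters $(p_i,y_i)$ modulo $\sum p_i=1$ leave enough slack to enforce simultaneously $G_\nu^{(j)}(x^*)=G_\mu^{(j)}(x^*)$ for $j=0,1$ and $G_\nu^{(2)}(x^*)=(2a/\kappa)(-G_\mu^{(3)}(x^*))^{3/4}$, which forces $t_{\rm cr}(x^*)=t_{\rm cr,\mu}(x^*)$ and $x_n^*(\cdot)=x_\mu^*(\cdot)$ exactly for every $n$, and delivers $a_n\to a$. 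Theorem~\ref{thrm_main}(3) then yields $\mathbb K^{a}_{\tau_1,\tau_2}(u,v)$ in the limit, i.e.~the claim with $b=0$; more general constructions in which $G_\nu^{(0)}$ is only matched up to the size of the perturbation produce the residual shift $c_{3,\mu}n^{3/4}(x_\mu^*(t)-x_n^*(t))$, which is $O(1)$ and, after the time-dependent cancellation from the $\tau/(c_{3,\mu}^2 n^{1/2})$ piece of $t_{n,\mu}^{\rm M}(\tau)$, becomes an additive drift of the form $\tau b$ with $b\neq 0$ encoded in the choice of $\nu$. The main obstacle is the tightness of the error bounds in the change-of-variable step: the spatial scaling factors $n^{2/3}$ and $n^{3/4}$ amplify every mismatch between $\mu_n$- and $\mu$-based quantities, so the Kolmogorov-distance hypotheses in~(1) and~(2) are essentially forced, and in~(3) the three-point-mass construction is what allows exact matching of $G_\mu^{(0)}$ and $G_\mu^{(1)}$ while still prescribing the transition parameter $a$.
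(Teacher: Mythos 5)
Parts (1) and (2) of your argument coincide with the paper's proof: set $x_n^*:=x^*$, note that Assumptions 1 and 2 follow from weak convergence and the distance condition, control $G_{\mu_n}^{(j)}(x^*)-G_\mu^{(j)}(x^*)$ by $d(\mu_n,\mu)$, and translate the $\mu$-based scaling into the $\mu_n$-based one via a bounded $\tau'$ and $u'=u+o(1)$; your explicit integration-by-parts bound is a correct way to make the comparison of the $G^{(j)}$ quantitative (the paper states the resulting expansions without detailing this step). Part (3) is where you genuinely diverge. The paper keeps $\mu_n$ close to $\mu$ at rate $n^{-1/4}$ by widening the gap around $x^*$ and shifting the spectrum so that the $\mu$-merging point remains a boundary point of $\mu_n\boxplus\s_t$, takes $x_n^*$ to be the $\mu_n$-merging initial point (which differs from $x^*$), and the shift $b$ then arises from the $O(n^{-1/4})$ mismatch $G_{\mu_n}(x_n^*)-G_\mu(x^*)$ interacting with the $\t/(c_{3,\mu}^2 n^{1/2})$ time increment. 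You instead perturb by an explicit mixture $(1-\k n^{-1/4})\mu+\k n^{-1/4}\nu$ with $\nu$ a three-atom measure matching $G_\mu^{(0)}$ and $G_\mu^{(1)}$ exactly and prescribing $G_\nu^{(2)}$, which keeps $x_n^*=x^*$, makes the linear evolution and critical time coincide exactly with the $\mu$-based ones, and yields the claim with $b=0$ — a legitimate instance of the existence statement, and arguably a cleaner one, though it does not exhibit the mechanism producing $b\neq0$ that the paper's construction is designed to display. Two details of your construction deserve a word more than you give them: the solvability of the three-atom truncated moment problem subject to $p_i\geq0$ and $y_i$ staying at positive distance from $x^*$ and outside $\supp(\mu)$ (this works because Jensen gives $-G_\mu^{(1)}(x^*)>\lb G_\mu^{(0)}(x^*)\rb^2$ strictly whenever $\mu$ is not a point mass, and the free parameter $\k$ lets you shrink the required third moment), and the fact that the $O(1/n)$ quantisation error in $G^{(0)},G^{(1)},G^{(2)}$ is negligible at the scales $n^{3/4}$, $n^{1/2}$ and $n^{1/4}$ respectively — both are routine but should be recorded.
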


We remark in passing that the condition $\liminf\limits_{n\to\infty}\textup{dist}(x^*,\textup{supp}(\mu_n))>0$ is a convenient replacement of Assumption 2 avoiding further technical assumptions on the behavior of $\mu_n$ and $\mu$ at $x^*$.

Analogously to above, edge points in the present setting (defined using $\mu\boxplus\s_t$) correspond to initial points $x^*$ with $I_\mu(x^*)>0$. Moreover, merging points correspond to initial points $x^*$ maximizing their critical time within a gap of the support of $\mu$: we speak of $\mu$ having the gap $(\a,\b)$ if  $\a< \b$ are such that $\mu((\a,\b))=0$, $\mu((\a-\d,\a])>0$ and $\mu([\b,\b+\d))>0$ for all $\d>0$. 
The $x^*\in[\a,\b]$ leading to the merging point of the two bulks left and right of the gap is given by
\begin{align}
	x^*:=\argmax_{x'\in[\a,\b]}t_{\rm cr,\mu}(x')=\argmax\limits_{x'\in[\a,\b]}\lb\int\frac{d\mu(s)}{(x'-s)^2}\rb^{-1}\label{define-argmax2},
\end{align}
again as the $x^*$ with the largest critical time in the gap. In contrast to the \(\mu_n\)-dependent setting above, we do not necessarily have $I_\mu(x^*)=0$ which may be the case if $x^*$ is $\a$ or $\b$. While we do not consider this here, \cite{CNV2} has shown in a situation with $\a=\b$ that Airy correlations can arise at the $\mu\boxplus\s_t$-merging point. This is the case if one bulk dominates the other, effectively pushing it away. In \cite{CNV2} it was found that in such a case there is a mesoscopic one-sided gap in the spectrum of order larger than $n^{-2/3}$ is present with high probability, effectively making the dominated bulk negligible to the dominating one as far as edge scaling is concerned. 

Part 3 of Corollary \ref{cor_mu} shows that the transition kernel can make an appearance in the $\mu\boxplus\s_t$ setting right at a merging point. However, it is less universal than in the $\mu_n\boxplus\s_t$ setting as now the limit retains more information about the particular sequence $(\mu_n)_n$ (displayed by the constant \(b\) in the kernel's arguments). The proof, to be found in Section \ref{section:corollaries}, is constructive, giving a general geometric procedure to construct examples in which this happens. 

\subsection*{Largest eigenvalue of a bulk}
Theorem \ref{thrm_main} implies that the space-time correlation functions of the appropriately rescaled NIBM converge to the ones of the Airy line ensemble, the Pearcey process or an infinite-dimensional stochastic process governed by the transition kernel, respectively. All three can be seen as a collection of infinitely many layers. In contrast to the other two, the Airy line ensemble has with probability one a largest layer, which is called Airy$_2$ process. It is of independent interest as it describes the typical fluctuations of the largest eigenvalue of NIBM as well as the fluctuations of height functions in certain random growth models belonging to the KPZ universality class. For NIBM, this has been shown to various extents in situations where $\mu_n\to\mu$ for some $\mu$, most notably in the form of universality of its one-dimensional marginals, which have the Tracy-Widom distribution (with parameter $\b=2$).
Theorem \ref{thrm_main} allows us to give a strong version of this statement, valid not just for the largest eigenvalue overall but for the largest eigenvalue of a bulk that might only have a mesoscopic distance to another bulk. Moreover, we do not require the initial configuration $\mu_n$ to have any large $n$ limit at all.
The Airy$_2$ process $(\A(\t))_{\t\in\R}$ \cite{PS} is defined by its finite-dimensional distributions 
\begin{align}
	P(\A(\t_1)\leq a_1,\dots,\A(\t_m)\leq a_m):=\det(I-\mathcal K^{\rm Ai}_{a_1,\dots,a_m} )_{L^2(\{\t_1,\dots,\t_m\}\times\R,\#\otimes \lambda)},\label{Airy_2_def}
\end{align}
where $\t_1<\dots<\t_m$, $a_1,\dots,a_m\in\R$, $\#$ is the counting measure, $\lambda$ the Lebesgue measure, and $\mathcal K^{\rm Ai}_{a_1,\dots,a_m}$ is the integral operator on $L^2(\{\t_1,\dots,\t_m\}\times\R,\#\otimes \lambda)$ given by
\begin{align}\label{def_Fredholm_limit}
	(\mathcal K^{\rm Ai}_{a_1,\dots,a_m}g)(\t,u):=\int_{\{\t_1,\dots,\t_m\}\times\R}r(\t,u)\mathbb K^{\rm Ai}_{\t,\t'}(u,v)r(\t',v)g(\t',v)\#(d\t')dv,
\end{align} 
and $r(\t_j,x):=1_{(a_j,\infty)}(x), \,j=1,\dots,m$.  
The Airy$_2$ process is stationary with one-dimensional marginals that have the $(\b=2$) Tracy-Widom distribution.  
\begin{cor}\label{cor_Airy_2}
	Let $\mu_n$ and $x_n^*$ satisfy Assumptions 1 and 2 and assume that $I_n(x_n^*)\to\infty$.
	Then there is some $\e>0$ such that for any sequence $(s_n)_n$ with $1\ll s_n\ll \min(I_n(x_n^*)^{1/3},n^{\e})$ the following holds. 
	Define $\xi(\tau)$ as the largest particle of $X(t_{n}^{E}(\t))$ which lies below the threshold value $x_n^*(t_n^{\rm E}(\t))+\frac{s_n}{c_2}n^{-2/3}$. Then, as $n\to\infty$, we have
	\begin{align}
		(c_2n^{2/3}(\xi(\t)-x_n^*(t_n^{\rm E}(\t))))_{\t\in\R}\to (\A(\t))_{\t\in\R},\label{conv:xi}
	\end{align}
	to be understood as weak convergence of the finite-dimensional distributions. In particular,  for any $\t\in\R$, we have weak convergence of $c_2n^{2/3}(\xi(\t)-x_n^*(t_n^{\rm E}(\t)))$ to the Tracy-Widom distribution as $n\to\infty$.
\end{cor}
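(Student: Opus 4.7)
The plan is to express the finite-dimensional distributions of the rescaled process $(c_2 n^{2/3}(\xi(\t)-x_n^*(t_n^{\rm E}(\t))))_{\t\in\R}$ as Fredholm determinants and then pass to the limit using the kernel convergence from Theorem \ref{thrm_main}(1). Fix $\t_1<\dots<\t_m$ and $a_1,\dots,a_m\in\R$. Provided each $\xi(\t_j)$ exists, the event $\bigcap_{j=1}^m\{c_2 n^{2/3}(\xi(\t_j)-x_n^*(t_n^{\rm E}(\t_j)))\leq a_j\}$ coincides with the event that NIBM has no particles in the space-time set $\bigsqcup_{j=1}^m \{t_n^{\rm E}(\t_j)\}\times J_{n,j}$, where $J_{n,j}:=(x_n^*(t_n^{\rm E}(\t_j))+a_j/(c_2 n^{2/3}),\, x_n^*(t_n^{\rm E}(\t_j))+s_n/(c_2 n^{2/3})]$. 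By the determinantal formula \eqref{determinantal_relations}, this gap probability equals the Fredholm determinant $\det(I-\hat K_n \chi_n)$ on $L^2(\{\t_1,\dots,\t_m\}\times\R,\#\otimes\lambda)$, where $\hat K_n(\t_i,u;\t_j,v)$ denotes the rescaled kernel on the left-hand side of \eqref{Airy-convergence} and $\chi_n(\t_j,u)=1_{(a_j, s_n]}(u)$.

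The hypothesis $s_n\ll\min(I_n^{1/3},n^{\e})$ places $u,v\in[-M,s_n]$ within the range of uniform validity of Theorem \ref{thrm_main}(1), so $\hat K_n(\t_i,u;\t_j,v)=\mathbb K^{\rm Ai}_{\t_i,\t_j}(u,v)+\O\lb e^{-\s(u+v)} n^{-\e}\rb$ uniformly on this domain. Expanding the Fredholm determinant as a Neumann series and applying Hadamard's inequality to each $k\times k$ block, the factor $e^{-\s(u+v)}$ combined with the standard bound on $\mathbb K^{\rm Ai}$ supplies an integrable majorant of the form $C^k k^{k/2}\prod_p e^{-2\s u_p}$, summable in $k$. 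Dominated convergence then gives $\det(I-\hat K_n\chi_n)\to\det(I-\K\chi_n^\infty)$ as $n\to\infty$, where $\chi_n^\infty(\t_j,u)=1_{(a_j, s_n]}(u)$. The super-exponential decay of $\mathbb K^{\rm Ai}_{\t,\t'}(u,v)$ as $u,v\to+\infty$, together with $s_n\to\infty$, then yields $\det(I-\K\chi_n^\infty)\to\det(I-\K\chi)$ with $\chi$ as in \eqref{def_Fredholm_limit}. Chaining these two limits produces \eqref{conv:xi}; applying the resulting one-dimensional convergence with a large negative threshold confirms \emph{a posteriori} that $\xi(\t_j)$ exists with probability tending to $1$, legitimizing the Fredholm representation in the first paragraph.

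The main obstacle is to control the Neumann expansion of $\det(I-\hat K_n\chi_n)$ uniformly in $n$ so that dominated convergence applies. This rests squarely on the exponential-decay factor $e^{-\s(u+v)}$ in the error term of Theorem \ref{thrm_main}(1), which is valid only for $u,v\in[-M, Ms_n]$ under the case $1\ll I_n\ll n^{\g}$, or $u,v\in[-M, Mn^{\e}]$ under $I_n\gg n^{\g}$. It is this range of validity that forces the hypothesis $s_n\ll\min(I_n^{1/3}, n^{\e})$: outside it the kernel estimate degenerates, and both the summability of the Neumann series and the possibility of letting the cutoff $s_n\to\infty$ in the last step would break down.
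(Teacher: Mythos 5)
Your proposal is correct and is essentially the argument the paper itself defers to: the paper omits the proof, stating that the proof of \cite[Theorem 1.4]{CNV2} applies with minor changes and that it hinges precisely on the two ingredients you isolate, namely the exponential decay $e^{-\s(u+v)}$ of the remainder in \eqref{Airy-convergence} and its validity on the growing range $u,v\in[-M,Ms_n]$ (resp.\ $[-M,Mn^{\e}]$), which is exactly what forces the hypothesis $s_n\ll\min(I_n^{1/3},n^{\e})$. The only point worth making explicit in your Hadamard/dominated-convergence step is that the claimed row-wise majorant $\prod_p e^{-\s u_p}$ for the limiting kernel uses the standard fact that the \emph{full} extended Airy kernel (double integral minus heat part) decays exponentially in $u+v$ — the heat part alone does not, and the joint decay is seen from the usual Airy-function factorization of $\mathbb K^{\rm Ai}_{\t,\t'}$.
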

The corollary is a strong generalization of \cite[Theorem 1.4]{CNV2}, where convergence was shown for the largest eigenvalue of a bulk around a mesoscopic gap close to the merging, under strong assumptions on the convergence of $\mu_n$ to some specific $\mu$ having a density with an isolated zero of order higher than 3. We see here that all these assumptions are not necessary. As the proof of \cite[Theorem 1.4]{CNV2} applies with minor changes, we will omit it here but note in passing that the proof requires both the strong estimate on the decay of the remainder in $u$ and $v$ in \eqref{Airy-convergence} as well as the convergence statement \eqref{Airy-convergence} being valid in a growing range of $u$ and $v$.

\subsection*{Application to random starting points}
So far the initial configuration $\mu_n$ has been deterministic. It is however also common to consider Markov processes like NIBM with random starting points and from a random matrix point of view it is very natural to consider a random matrix as a starting point of a Hermitian Brownian motion. In fact, proving universality for NIBM with random starting points for very short times is at the heart of many universality proofs of different random matrix ensembles using the so-called three-step approach (see e.g.~\cite{EYbook} and references therein). 

Given the minimal assumptions on the (deterministic) starting points in the previous results, it should come as little surprise that we can extend these results to random starting points as long as Assumptions 1 and 2 are satisfied. In fact we can in our previous results simply allow $\mu_n$ to be random and then start NIBM independently from the starting points. Note that with random starting points NIBM are in general not determinantal but their space-time correlation functions can be defined using \eqref{def:corr_fct1} and \eqref{def:corr_fct2}. Note also that for such statements the necessary rescaling quantities $x_n^*, c_2,c_3$ are in general random as well, much like in self-normalizing limit theorems. Stating such a result would however be highly repetitive compared to the previous results, thus we just mention this possibility and focus instead on a version of Corollary \ref{cor_Airy_2} which should be of interest in its own right. It shows that under mild assumptions the largest eigenvalue of \emph{any} Hermitian random matrix with sufficiently bounded spectrum exhibits Tracy-Widom fluctuations provided that it has a sufficiently large Gaussian component. 
\begin{cor}\label{cor_random}
	Let for each $n\in\N$, $A$ be an $n\times n$ Hermitian random matrix and let $\mu_n$ denote the empirical distribution of its eigenvalues. Assume that there are non-random $n$-independent $a<b$ such that, as $n\to\infty$, there is no eigenvalue of $A$ larger than $b$ and $\mu_n([a,b])\geq c$ for some $n$-independent $c>0$, both with probability $1-o(1)$. Let $x_{\max}$ denote the largest eigenvalue of $A+\sqrt t G$, where  $G$ is a random matrix from the Gaussian Unitary Ensemble with diagonal entries of variance $n^{-1}$, independent of $A$. Then for any fixed $t>(b-a)^2/c$, $x_{\max}$ has  Tracy-Widom fluctuations in the limit $n\to\infty$. More precisely, there are real random variables $b_n$ and $k_n>0$ independent of $G$ such that for any $s\in\R$
	\begin{align}
		\lim_{n\to\infty}P\lb k_nn^{2/3}(x_{\max}-b_n)\leq s\rb= F_{\rm TW}(s),
	\end{align}
	where $F_{\rm TW}$ is the distribution function of the ($\beta=2$) Tracy-Widom distribution. 
\end{cor}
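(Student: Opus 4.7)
The plan is to reduce the claim to Corollary \ref{cor_Airy_2} by conditioning on $A$. The construction of NIBM in the paper identifies the eigenvalues of $A + \sqrt{t}G$ with those of $n^{-1/2}M(t)$ when $M(0)$ has spectrum equal to that of $A$ and $\sqrt{t}G$ plays the role of $n^{-1/2}(M(t)-M(0))$, so on the asymptotically full-probability event $\Omega_n$ where all eigenvalues of $A$ lie in $(-\infty, b]$ and $\mu_n([a,b]) \geq c$, the problem reduces to studying the largest NIBM particle at time $t$ started from $\mu_n$. I will choose $x_n^*$ as the unique solution in $(b,\infty)$ of $\int d\mu_n(s)/(x_n^* - s)^2 = 1/t$, i.e.\ $t_{\rm cr}(x_n^*) = t$. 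Because this integral is continuous, strictly decreasing on $(b,\infty)$, tends to $0$ at infinity, and is bounded below by $c/(x_n^* - a)^2$, the hypothesis $t > (b-a)^2/c$ guarantees a unique solution with $b + \delta \leq x_n^* \leq b + \sqrt{t}$ for some $n$-independent $\delta > 0$; in particular $x_n^*$ is bounded and $\mathrm{dist}(x_n^*, \mathrm{supp}(\mu_n)) \geq \delta$ throughout $\Omega_n$.

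This uniform gap makes Assumption 2 hold with the trivial bound $\int d\mu_n(s)/|x_n^* - s|^5 \leq \delta^{-5}$, while Assumption 1 follows from $\mu_n([-L,L]) \geq c$ for $L = \max(|a|,|b|)$, and $I_n(x_n^*) \geq n^{1/4} c / (x_n^* - a)^3$ diverges at rate $n^{1/4}$. Corollary \ref{cor_Airy_2} at $\tau = 0$ therefore yields, conditionally on $A$ on $\Omega_n$, that $c_2 n^{2/3}(\xi(0) - x_n^*(t)) \Rightarrow \mathcal{A}(0) \sim F_{\rm TW}$, where $\xi(0)$ is the largest eigenvalue of $A + \sqrt{t}G$ lying below the threshold $T_n := x_n^*(t) + (s_n/c_2) n^{-2/3}$ for any admissible sequence $1 \ll s_n \ll \min(I_n^{1/3}, n^{\varepsilon})$.

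The main obstacle is then to upgrade $\xi(0)$ to the overall maximum $x_{\max}$, which amounts to showing $P(\exists\text{ eigenvalue} > T_n \mid A) = o(1)$ on $\Omega_n$. The natural approach is via the one-point function: the conditional expected number of eigenvalues above $T_n$ equals $\int_{T_n}^\infty K_{n,t,t}(x,x)\,dx$, and since the gauge conjugation \eqref{conjugation} is trivial on the diagonal, these values coincide with those of $\tilde K_n$ appearing in \eqref{Airy-convergence}. The quantitative, uniform-in-$u,v$ form of \eqref{Airy-convergence} on $[-M, M s_n]$ together with the exponential factor $e^{-\sigma(u+v)}$ allows a change of variables to yield
\[
\mathbb{E}[\#\{\text{eigenvalues above } T_n\} \mid A] = \int_{s_n}^\infty \mathbb{K}^{\rm Ai}_{0,0}(u,u)\,du + o(1) = o(1),
\]
and Markov's inequality delivers the required tail bound. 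Finally, setting $k_n := c_2$ and $b_n := x_n^*(t)$, both measurable functions of $A$ and hence independent of $G$, dominated convergence together with $P(\Omega_n) \to 1$ upgrades the conditional convergence to the unconditional statement $\lim_n P(k_n n^{2/3}(x_{\max} - b_n) \leq s) = F_{\rm TW}(s)$. The central difficulty is precisely this tail-control step: the quantitative, uniform-in-spatial-variables strengthening of \eqref{Airy-convergence} (rather than mere pointwise convergence) is indispensable here, and is also the reason one needs $I_n$ to diverge at a polynomial rate so that $s_n$ can itself be chosen to tend to infinity.
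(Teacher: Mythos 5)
Your proposal follows essentially the same route as the paper: condition on $A$, use $t>(b-a)^2/c$ to force the initial point $x_n^*$ reaching the top edge to lie at a macroscopic ($n$-independent) distance above $b$, so that Assumptions 1 and 2 hold, $I_n\gtrsim n^{1/4}$, and Corollary \ref{cor_Airy_2} applies with $k_n=c_2$ and $b_n=x_n^*(t)$ measurable with respect to $A$ alone. The only place you go beyond the paper's very brief argument is the explicit upgrade from the thresholded maximum $\xi(0)$ to $x_{\max}$; there your one-point-function estimate is not fully covered by \eqref{Airy-convergence}, whose uniformity extends only to $u\leq Mn^{\e}$, so the tail of the integral beyond that range would need a separate bound --- but the paper itself leaves precisely this step implicit, deferring it to the proof of Corollary \ref{cor_Airy_2} (i.e.\ to \cite[Theorem 1.4]{CNV2}), so this does not put you behind the paper's own level of detail.
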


The remainder of this paper is organized as follows. Theorem \ref{thrm_main} will be proved via a careful asymptotic analysis of the double contour integral \eqref{def:Kn}. The main asymptotic contribution of the double contour integral will come from a neighborhood of $x_n^*$, which size ranges from $n^{-1/3}$ for $I_n\gg n^\g$ for some $\g>0$, up to $n^{-1/4+\e}$ for $I_n$ bounded. The later called \emph{slow Airy case} of small growth $1\ll I_n\ll n^\g$ for any $\g>0$ is particularly delicate and requires working on two scales simultaneously. Section \ref{section:prelim} prepares for the choice of the contour $\Gamma$ in \eqref{def:Kn}. This requires a good understanding of the behavior of the density $\psi_{n,t_{\rm cr}}$ around the merging point, which will be provided in Proposition \ref{proposition:density}. In Section \ref{Sec:proofAiry}, Theorem \ref{thrm_main} is proven for the \emph{fast Airy case} $I_n\gg n^\g$ for some $\g$ and the slow Airy case; the Pearcey and transition cases of Theorem \ref{thrm_main} are proven in Section \ref{Sec:Pearcey}. Proofs of Proposition \ref{prop:interpolation}, Corollary \ref{cor_mu} and Corollary \ref{cor_random} are given in Section \ref{section:corollaries}.\\

\textbf{Acknowledgements:} The authors would like to thank  Torben Kr\"uger for valuable discussions. Partial support by the DFG through the CRC 1283 ``Taming uncertainty and profiting from
randomness and low regularity in analysis, stochastics and their applications'' is gratefully acknowledged.

\section{Preliminaries}\label{section:prelim}
The proof of Theorem \ref{thrm_main} relies on an asymptotic analysis of the rescaled correlation kernel $\tilde{K}_{n}$ defined in \eqref{conjugation} based on the representation as a double complex contour integral given in \eqref{def:Kn}.
In order to study the asymptotic behavior of the integral, it is crucial to make appropriate choices for both contours of integration \(x_0+i\R\) and \(\Gamma\) in \eqref{def:Kn}, being particularly important in the neighborhood of critical points of the integrand which provide the main contributions to the limits.

For the choice of the $w$-integration contour $\Gamma$, up to a local modification around the points $x_n^*$, we will use the graph of the function $y_{t,\mu_n}:\R\to\R$, defined by
\begin{align}
	y_{t,\mu_n} (x):=\inf\left\{y>0 ~: ~ \int \frac{d\mu_n(s)}{(x-s)^2+y^2} \leq \frac{1}{t}\right\},\label{def:y}
\end{align}
for $t=t_n^{\rm E}(\t_1)$ or $t=t_n^{\rm M}(\t_1)$, viewed as a curve in the complex plane, joint with its complex conjugate. The significance of this function stems from the fact that it is a multiple of the density $\psi_{n,t}$ of the measure $\mu_n\boxplus\s_t$ under a change of variable. More precisely, Biane \cite{Biane} has shown that 
\begin{align}
	\psi_{n,t}(\tilde x_n(t))=\frac{y_{t,\mu_n}(x)}{\pi t},
\end{align}
where for every $x$ and $t>0$, the function $x\mapsto \tilde x_n(t)$,
\begin{align}\label{homeo}
	\tilde x_n(t):=x+t\int\frac{(x-s)d\mu_n(s)}{(x-s)^2+y_{t,\mu_n}(x)^2}
\end{align}
is a homeomorphism from $\R$ to $\R$ (for any \(t>0\) fixed). We note that $\tilde x_n(t)$ coincides with the linear evolution $x_n^*(t)$ from \eqref{linear_evolution} for $x=x_n^*$ if $y_{t,\mu_n}(x)=0$. Moreover, using these notions we can see that every boundary point of the support of \(\mu_n \boxplus \sigma_t\) can be expressed as \(\left(x_n^*(t_{\rm cr}), t_{\rm cr}\right)\) with \(t_{\rm cr} = t_{\rm cr}(x_n^*)\). To this end, we recall that such a boundary point, say \(b\), is characterized by \(\psi_{n,t}(b)=0\) and that \(\psi_{n,t}\) takes on positive values on any neighborhood of \(b\). Using the homeomorphism \eqref{homeo} we can write \(b = \tilde x_n(t)\) for some initial point \(x = x_n^*\), which implies \(y_{t,\mu_n} (x_n^*) = 0\) and, using \eqref{def:y}, we then have \(t= t_{\rm cr}(x_n^*).\)

For our analysis of the double contour integral, we need to understand the behavior of the function $y_{t,\mu_n}$ in small neighborhoods of the points $x_n^*$, or equivalently of the density $\psi_{n,t}$ at $x_n^*(t)$, where $t=t_n^{\rm E}(\t_1)$ or $t=t_n^{\rm M}(\t_1)$. For the Pearcey and transition cases, in which we have $t=t_n^{\rm M}(\t_1)$, it suffices to know where the graph of $y_{t,\mu_n}$ enters the disk around $x_n^*$ of radius $n^{-1/4+\e}$ for some small $\e>0$, whereas for the Airy case, in which we have $t=t_n^{\rm E}(\t_1)$, we need to study the behavior of $y_{t,\mu_n}$ on the boundary of the disk around $x_n^*$ of radius slightly larger than $(nG_n^\2)^{-1/3}$. To address the problem of separation of the scales $(nG_n^\2)^{-1/3}$ and $n^{-1/4}$, we will also need information on $y_{t_n^{\rm E}(\t_1),\mu_n}$ on the larger scale $n^{-1/4+\e}$, where we recall that for the statement and the proofs of the first and the last part of Theorem \ref{thrm_main} we focus on the case of positive values of \(G_n^{(2)}\). Before we turn our attention to these points in depth in Proposition \ref{proposition:density} below, for convenience of the reader we first state the following elementary lemma giving the expansion of the Stieltjes transform \(G_{\mu_n}\) in shrinking disks in the plane, which we will use frequently and which readily follows from standard arguments. For the definition of the coefficients $G_n^{(j)}$ we refer to \eqref{def:G_j}, and we consider Assumptions 1 and 2 valid throughout the section.

\begin{lemma}\label{lemma:expansion}
	For every \(0<\epsilon<\frac{1}{20}\) we have 
	\[G_{\mu_n}(z) = G_n^{(0)}+ G_{n}^{(1)} (z-x_n^{*}) + \frac{G_n^{(2)}}{2}(z-x_n^{*})^2+\frac{G_n^{(3)}}{6} (z-x_n^{*})^3 + \mathcal{O}\lb\left(z-x_n^{*}\right)^4\rb,\]
	
	as \(n\to\infty\) uniformly in $\{z\in\C:\lv z-x_n^{*}\rv\leq Cn^{-1/4+\epsilon}\}$ for any $C>0$.
	
\end{lemma}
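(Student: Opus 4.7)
The plan is to obtain the expansion directly from the geometric series representation of $\frac{1}{z-s}$ around the base point $s = x_n^*$, using the mesoscopic gap implied by Assumption 2 to justify termwise control. Concretely, fix $\epsilon\in(0,\tfrac1{20})$ and write $z-s=(x_n^*-s)+(z-x_n^*)$; the algebraic identity
\[
\frac{1}{(x_n^*-s)+(z-x_n^*)}-\sum_{k=0}^{3}\frac{(-1)^k (z-x_n^*)^k}{(x_n^*-s)^{k+1}}=\frac{(z-x_n^*)^4}{(x_n^*-s)^4\,(z-s)}
\]
holds whenever the denominators are nonzero. Integrating the left-hand side against $d\mu_n(s)$ and identifying $\int \frac{(-1)^k d\mu_n(s)}{(x_n^*-s)^{k+1}}$ with $G_n^{(k)}/k!$ (per definition \eqref{def:G_j}) produces exactly the desired Taylor polynomial, with the integrated right-hand side serving as the error term.

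Next I would justify the pointwise and uniform estimates. By the consequence of Assumption 2 recorded in the introduction, for any $\d>1/5$ one has $\mu_n([x_n^*-n^{-\d},x_n^*+n^{-\d}])=0$ for all $n$ large. Choose such a $\d$ with $\epsilon+\d<\tfrac14$ (possible since $\epsilon<\tfrac1{20}$). Then for $s\in\supp(\mu_n)$ and $|z-x_n^*|\leq Cn^{-1/4+\epsilon}$ one has $|x_n^*-s|\geq n^{-\d}$ and $|z-x_n^*|/|x_n^*-s|\leq Cn^{-1/4+\epsilon+\d}\to 0$, so that $|z-s|\geq\tfrac12|x_n^*-s|$ for $n$ large. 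This makes the integrand in the remainder well-defined and yields
\[
\lvv G_{\mu_n}(z)-\sum_{k=0}^{3}\frac{G_n^{(k)}}{k!}(z-x_n^*)^k\rvv\leq 2|z-x_n^*|^4\int\frac{d\mu_n(s)}{|x_n^*-s|^5}\leq 2C|z-x_n^*|^4
\]
by Assumption 2, giving the $\O((z-x_n^*)^4)$ bound uniformly on the disk.

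The proof is essentially routine once the geometric series is combined with the gap; the only subtle point, which I consider the main (and very mild) obstacle, is to verify that the numerology of exponents lines up: one must simultaneously have $\d>1/5$ (for the mesoscopic gap from Assumption 2) and $\d<1/4-\epsilon$ (so that $|z-s|$ stays comparable to $|x_n^*-s|$ on the whole shrinking disk), and this is exactly why the hypothesis $\epsilon<1/20$ is imposed. All other steps are bookkeeping.
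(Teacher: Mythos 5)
Your proof is correct and follows essentially the same route as the paper: expand the Cauchy kernel geometrically around $x_n^*$, use the mesoscopic gap implied by Assumption 2 to keep $|z-s|$ comparable to $|x_n^*-s|$ on the shrinking disk (this is exactly where $\epsilon<\tfrac1{20}$ enters in both arguments), and control the remainder by the fifth-moment bound. The only cosmetic difference is that you use the finite geometric series with its exact remainder, whereas the paper sums the full Taylor tail against a convergent geometric series of ratios; both reduce to the same estimate.
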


\begin{proof}[Proof of Lemma \ref{lemma:expansion}]
	
	For any fixed \(0<\epsilon<\frac{1}{20}\), \(C>0\) and \(n\) large enough, by Assumption 2 we have analyticity of the Stieltjes transform \(G_{\mu_n}\) in an open (complex) disk centered at \(x_n^{*}\) with a radius  \(C n^{-1/5}\) for some $C>0$. Hence, for all \(z\) with \(\vert z- x_n^* \vert \leq C n^{-1/4 +\epsilon}\) and $0<\e<\frac1{20}$ we have 
	\[G_{\mu_n}(z) = \sum_{k=0}^3 \frac{G_n^{(k)}}{k!} (z-x_n^{*})^k + R_n (z),\quad R_n(z) :=\sum_{k=4}^\infty \frac{G_n^{(k)}}{k!} (z-x_n^{*})^k.\]
	We infer
	\begin{align*}\left\vert R_n(z) \right\vert\ &\leq \sum_{k=4}^\infty \vert z-x_n^{*} \vert^k  \int \frac{1}{\vert x_n^* -s \vert^{k+1}} d \mu_n (s)= \vert z-x_n^{*} \vert^4 \sum_{k=4}^\infty  \int \frac{\vert z-x_n^{*} \vert^{k-4}}{\vert x_n^* -s \vert^{k-4}} \frac{1}{\vert x_n^* -s \vert^5} d \mu_n (s)\\
		&\leq  \vert z-x_n^{*} \vert^4 \sum_{k=4}^\infty q^{k-4} \int \frac{1}{\vert x_n^* -s \vert^5} d \mu_n (s)
	\end{align*} 
for some $0<q<1$,
	giving
	\(R_n(z)  = \mathcal{O}\lb\left(z-x_n^*\right)^4\rb,\ n\to\infty\) by Assumption 2.
	
\end{proof}

We turn to a proposition providing information on the behavior of the density describing function \(y_{t, \mu_n}\) on small disks centered at the points \(x_n^*\). This information will be crucial for choosing convenient integration contours for our analysis later on. We will have to distinguish between a fast and a slow Airy case, depending on the speed of divergence of $n^{1/4}G_n^\2=2I_n$ to infinity. In the latter we need  detailed information about the behavior of \(y_{t, \mu_n}\) close to $x_n^*$ on two scales. More precisely, we will study the location of the graph of \(y_{t, \mu_n}\) with respect to parts of the boundaries of two differently sized disks centered at \(x_n^*\), defined in the proposition below as \(S_j^{\frac13}\) and \(S_j^{\frac14}\) for \(j = 1,2\). We will see that for large values of \(n\) the graph of \(y_{t_n^{\rm E}(\tau), \mu_n} (x)\) lies below  \(S_1^{\frac13}\) and above \(S_2^{\frac13}\), whereas the graphs of $ y_{t_n^{\rm M}(\tau), \mu_n} (x)$ and $y_{t_n^{\rm E}(\tau), \mu_n} (x)$ enter the disk $D_n^{\frac14}$ centered at $x_n^*$ from the right around $x_n^*+e^{i\pi/3}$, leaving it around $x_n^*+e^{i2\pi/3}$, whenever the growth of the sequence \(\left(n^{1/4} G_n^{(2)}\right)_n\) is sub-polynomial, see Figure \ref{figure_local_density_Pearcey} below for a visualization.
\begin{figure}[h]
\begin{minipage}{0.48\columnwidth}
	\centering
	\def\svgwidth{\columnwidth}
	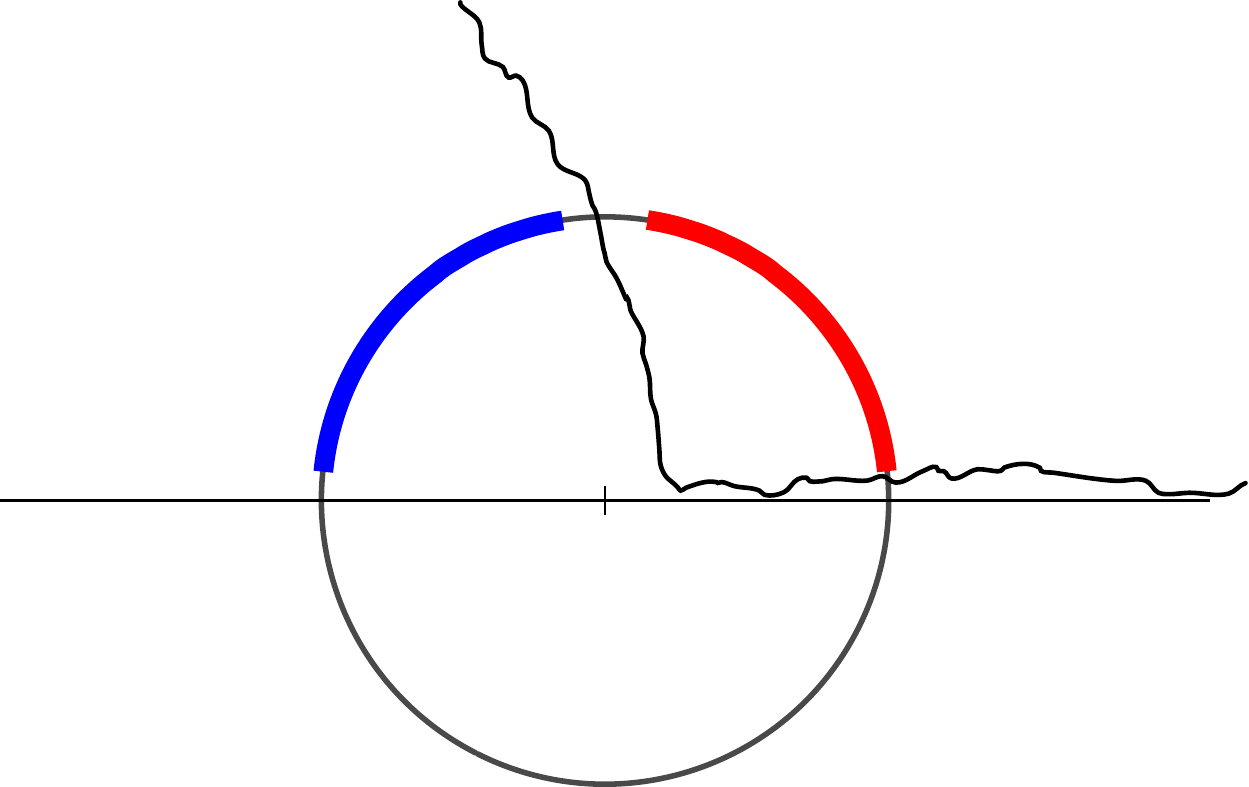
\end{minipage}
\begin{minipage}{0.48\columnwidth}
	\centering
	\def\svgwidth{\columnwidth}
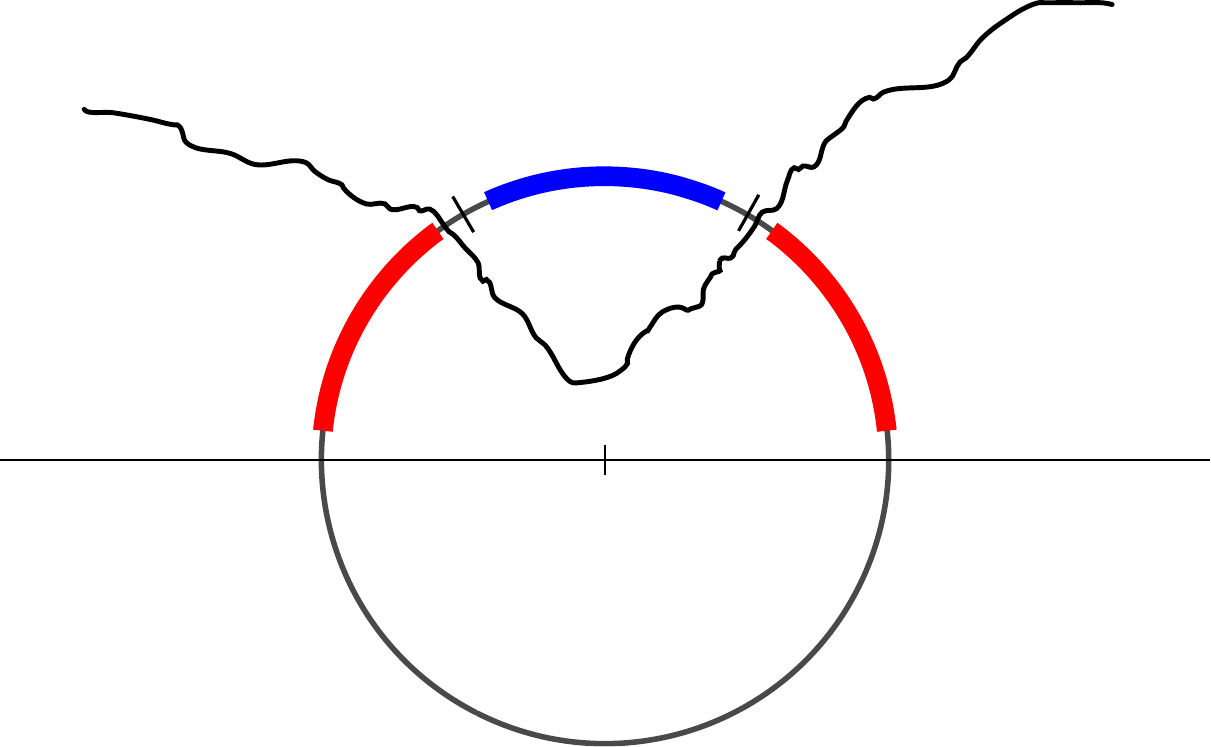
\end{minipage}
	\caption{Behavior of \(x\mapsto y_{t_n^{\rm E}(\tau), \mu_n} (x)\) (l.h.s.) and \(x\mapsto y_{t_n^{\rm M}(\tau), \mu_n} (x)\) (r.h.s.) around $x_n^*$.}\label{figure_local_density_Pearcey}
\end{figure}
\begin{prop}\label{proposition:density}\noindent
	\begin{enumerate}
		\item(Fast and slow Airy cases)\\ Assume that we have $n^{1/4}G_n^\2 \to+\infty$, as \(n\to\infty\), and let	$(r_n)_n$ be a sequence of positive real numbers with $(nG_n^\2)^{-1/3} \ll r_n\ll n^{-1/4}$ as $n\to\infty$. Let \(D_n^{\frac13}\subset\C\) be the closed disk centered at \(x_n^*\) with radius $r_n$. For \(\delta>0\) let \(S_1^{\frac13}(\d)\) be the subset of the boundary \(\partial D_n^\frac13 \) consisting of all points \(w\) with 
		\[\arg \left(w-x_n^* \right) \in \left( \delta, \frac{\pi}{2}-\delta\right),\]
		and let \(S_2^{\frac13}(\d)\) be the subset of the boundary \(\partial D_n^{\frac13} \) consisting of all points \(w\) with 
		\[\arg \left(w-x_n^* \right) \in \left( \frac{\pi}{2}+\delta, \pi-\delta\right).\]
		Then for any $\d>0$ small enough, $n$ large enough and uniformly with respect to \(\tau\) in compact subsets of $\R$, the graph of the function \(x\mapsto y_{t_n^{\rm E}(\tau), \mu_n} (x)\) (considered as a subset of the complex plane) lies for $x\in\R\cap D_n^{\frac13}$, below \(S_1^{\frac13}\) and above \(S_2^{\frac13}\).
		
		\item (Slow Airy, Pearcey and transition cases)\\ Assume that for every $\g>0$ we have $\left\vert n^{1/4}G_n^\2\right\vert \ll n^\g$.
		Let \(0<\epsilon<\frac{1}{20}\) and \(D_n^{\frac14}\subset\C\) be the closed disk centered at \(x_n^*\) with radius \(n^{-\frac{1}{4}+\epsilon}\). For \(\delta>0\) let \(S_1^{\frac14}(\d)\) be the subset of \(\partial D_n^\frac14 \) consisting of all points \(w\) with 
		\[\arg \left(w-x_n^* \right) \in \left( \delta, \frac{\pi}{3}-\delta\right) \cup \left(\frac{2}{3}\pi +\delta, \pi -\delta \right),\]
		and let \(S_2^{\frac14}(\d)\) be the subset of \(\partial D_n^{\frac14} \) consisting of all points \(w\) with 
		\[\arg \left(w-x_n^* \right) \in \left( \frac{\pi}{3}+\delta, \frac{2}{3} \pi-\delta\right).\]
		Then for every $\d>0$ small enough, $n$ large enough and uniformly with respect to \(\tau\) in compact subsets of $\R$, the graphs of the functions \(x\mapsto y_{t_n^{\rm E}(\tau), \mu_n} (x)\) and \(x\mapsto y_{t_n^{\rm M}(\tau), \mu_n} (x)\) lie for $x\in\R\cap D_n^{\frac14}$, embedded in $\C$, above \(S_1^{\frac14}\) and below \(S_2^{\frac14}\). In the statement on \(y_{t_n^{\rm E}(\tau), \mu_n}\) we assume positive values of  \(G_n^\2\).
	\end{enumerate}
\end{prop}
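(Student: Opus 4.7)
The plan is to reduce both parts of the proposition to a sign analysis for a single scalar quantity on the relevant boundary circle, and to extract this sign from the local expansion of $G_{\mu_n}$ at $x_n^*$ given by Lemma \ref{lemma:expansion}. The starting point is that for $y > 0$, the defining equation \eqref{def:y} is equivalent to $-\mathrm{Im}\,G_{\mu_n}(x+iy)/y = 1/t$, with the left-hand side strictly decreasing in $y$. Parameterizing a point on the relevant circle as $w = x_n^* + r e^{i\theta}$ with $\sin\theta > 0$ and setting $x := x_n^* + r\cos\theta$, the graph of $y_{t,\mu_n}$ lies above $w$ if and only if
\[
A(\theta) := -\frac{\mathrm{Im}\,G_{\mu_n}(x_n^* + r e^{i\theta})}{r\sin\theta} - \frac{1}{t} > 0,
\]
and below $w$ iff $A(\theta) < 0$. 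The proposition thus reduces to a sign analysis for $A(\theta)$ on the prescribed angular arcs.

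Inserting Lemma \ref{lemma:expansion}, using $-G_n^{(1)} = 1/t_{\rm cr}$ together with the elementary identities $\sin(2\theta)/\sin\theta = 2\cos\theta$ and $\sin(3\theta)/\sin\theta = 4\cos^2\theta - 1$, I would obtain
\[
A(\theta) = \left(\tfrac{1}{t_{\rm cr}} - \tfrac{1}{t}\right) - G_n^{(2)}\, r\cos\theta - \tfrac{G_n^{(3)}}{6}\, r^2(4\cos^2\theta - 1) + \O(r^3).
\]
The remainder is an identification of the leading term in each regime. In part 2 on $D_n^{\frac14}$ (so $r = n^{-1/4+\epsilon}$), the hypothesis $|n^{1/4}G_n^{(2)}|\ll n^\gamma$ for every $\gamma>0$ gives $G_n^{(2)}/r = 2I_n n^{-\epsilon} \to 0$, whence $G_n^{(2)} r\cos\theta = o(r^2)$; the time-offset is $\O(n^{-1/2})$ for $t = t_n^{\rm M}(\tau)$ and $\O((G_n^{(2)})^{2/3} n^{-1/3})$ for $t = t_n^{\rm E}(\tau)$, and both are $o(r^2)$ as soon as $\epsilon < 1/20$. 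Hence the leading term is $-\tfrac{G_n^{(3)}}{6} r^2(4\cos^2\theta - 1)$; since $-G_n^{(3)}$ is bounded below away from $0$, $A(\theta)$ has the sign of $4\cos^2\theta - 1$, which is strictly positive on the arcs of $S_1^{\frac14}$ (where $|\cos\theta| > \tfrac12$) and strictly negative on $S_2^{\frac14}$ (where $|\cos\theta| < \tfrac12$), uniformly for $\theta$ in $\delta$-separated arcs and $\tau$ in compacts. For part 1 on $D_n^{\frac13}$ (so $r = r_n$), the inequalities $r_n \ll n^{-1/4} \ll G_n^{(2)}$ and $r_n^3 \gg (nG_n^{(2)})^{-1}$ reverse the hierarchy: $G_n^{(2)} r_n$ now dominates both $r_n^2$ and $(G_n^{(2)})^{2/3} n^{-1/3} \asymp 1/t_{\rm cr} - 1/t_n^{\rm E}(\tau)$, so $A(\theta) \sim -G_n^{(2)} r_n \cos\theta$, which is strictly negative on $S_1^{\frac13}$ and strictly positive on $S_2^{\frac13}$.

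The delicate step is the slow Airy subregime of part 2, where $I_n \to \infty$ sub-polynomially and, \emph{a priori}, the terms $G_n^{(2)} r\cos\theta$ and $r^2(4\cos^2\theta - 1)$ could compete with opposite signs. The crucial quantitative input is the sub-polynomial growth restriction $I_n \ll n^\gamma$ for every $\gamma > 0$: applied with $\gamma = \epsilon$ it produces the decisive $n^{-\epsilon}$ factor in $G_n^{(2)}/r$, just enough to keep the quadratic term in the lead throughout the mesoscopic scale $r = n^{-1/4+\epsilon}$ and so to preserve the Pearcey-type sign pattern. Uniformity in $\tau$ on compacts is then automatic, as the $\tau$-dependence enters only through the continuous quantity $1/t_{\rm cr} - 1/t$, and the remainder $\O(r^3) = o(r^2)$ is independent of $\tau$.
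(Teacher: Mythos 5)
Your proposal is correct and follows essentially the same route as the paper's proof: expand $-\Im G_{\mu_n}(x_n^*+re^{i\theta})/(r\sin\theta)$ via Lemma \ref{lemma:expansion}, use $-G_n^{(1)}=1/t_{\rm cr}$ and the identities $\sin 2\theta/\sin\theta=2\cos\theta$, $\sin 3\theta/\sin\theta=4\cos^2\theta-1$, and compare the resulting quantity with $1/t$ by isolating the dominant term ($-G_n^{(2)}r\cos\theta$ on the $r_n$-scale, $-\tfrac{G_n^{(3)}}{6}r^2(4\cos^2\theta-1)$ on the $n^{-1/4+\epsilon}$-scale). Your unified sign function $A(\theta)$ and the explicit hierarchy checks in each regime match the paper's computations, including the delicate slow Airy subcase where the sub-polynomial growth of $I_n$ keeps the cubic coefficient subordinate to the quartic one.
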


\begin{proof}[Proof of Proposition \ref{proposition:density}]

In the first part of statement (1) we claim that the graph of \(y_{t_n^{\rm E}(\tau), \mu_n} (x)\) for $x\in\R\cap D_n^{\frac13}$ lies below the arc \(S_1^{\frac13}\). To show this, by \eqref{def:y} it is sufficient to show for all points \(x+iy \in S_1^{\frac13}\) that the inequality 
\[\int \frac{d\mu_n(s)}{(x-s)^2+y^2}<\frac{1}{t_n^{\rm E}(\tau)}\]
holds for \(\tau\) coming from a compact subset, if \(n\) is chosen large enough. In the other cases we follow a similar reasoning.
	
	We begin with the proof of the first statement by observing the identity for \(y>0\) (see  \eqref{def:y})
	\begin{align*}
		\int \frac{d\mu_n(s)}{(x-s)^2+y^2}=-\frac{\Im G_{\mu_n}(x+iy)}y.
	\end{align*}
	For a subsequent expansion we observe using the definitions of $t_{\rm cr}$, $c_2$ and $t_n^{\rm E}(\t)$ from \eqref{def:t_cr} and \eqref{def:tnS} that
	\[-G_n^{(1)}= \frac{1}{t_n^{\rm E}(\tau)} + \frac{2^{1/3}\tau t_{\rm cr} \left(G_n^{(2)}\right)^{2/3}}{t_n^{\rm E}(\tau) n^{1/3}}.\] 
	Writing \(z=x+iy=x_n^{*} + r_n e^{i\theta} \) we get from Lemma \ref{lemma:expansion} for \(\delta < \theta < \frac{\pi}{2}-\delta\) with any fixed $0<\delta<\frac\pi2$ 
	\begin{align*}
		-\frac{\Im G_{\mu_n}(x+iy)}{y}&=-G_n^{(1)}-\frac{G_n^{(2)}}{2} r_n \frac{\sin 2\theta}{\sin \theta} -\frac{G_n^{(3)}}{6} r_n^2 \frac{\sin 3\theta}{\sin \theta} + \mathcal{O}\left(r_n^3\right)\\
		&=-G_n^{(1)}-G_n^{(2)} r_n \cos \theta + \mathcal{O}\left(r_n^2\right)\\
		&= \frac{1}{t_n^{\rm E}(\tau)} + \frac{2^{1/3}\tau t_{\rm cr} \left(G_n^{(2)}\right)^{2/3}}{t_n^{\rm E}(\tau) n^{1/3}}-G_n^{(2)} r_n \cos \theta + \mathcal{O}\left(r_n^2\right)\\
		&=\frac{1}{t_n^{\rm E}(\tau)}- G_n^{(2)}r_n \cos\theta \left(1-\frac{2^{1/3}\tau t_{\rm cr}}{t_n^{\rm E}(\tau) \cos \theta} \frac{\left(nG_n^{(2)}\right)^{-1/3}}{r_n}+\mathcal{O}\left(\frac{r_n}{G_n^{(2)}}\right)\right),
	\end{align*}
	as \(n\to\infty\), uniformly with respect to \(\delta < \theta < \frac{\pi}{2}-\delta\) and \(\tau\) in compact subsets. It follows directly from our assumptions that as $n\to\infty$
	\[\frac{r_n}{G_n^{(2)}} \ll \frac{n^{-1/4}}{G_n^{(2)}} \to 0,\qquad \frac{\left(nG_n^{(2)}\right)^{-1/3}}{r_n}\to 0,\]
	and thus we can conclude
	\[-\frac{\Im G_{\mu_n}(x+iy)}{y}=\frac{1}{t_n^{\rm E}(\tau)}- G_n^{(2)}r_n \cos\theta \left(1+o(1)\right),\]
	as \(n\to\infty\), uniformly with respect to \(\delta < \theta < \frac{\pi}{2}-\delta\) and \(\tau\) in compact subsets.
	Hence, for every fixed \(0<\delta <\frac\pi2\) and \(\tau\) coming from a compact subset, if \(n\) is chosen large enough we have for all points \(x+iy\) lying in \(S_1^{\frac{1}{3}}(\delta)\) the inequality
	\[-\frac{\Im G_{\mu_n}(x+iy)}{y}<\frac{1}{t_n^{\rm E}(\tau)}.\]
	
	 In an analogous fashion we obtain for \(\frac{\pi}{2}+\delta <\theta < \pi -\delta\) the inequality
	\[-\frac{\Im G_{\mu_n}(x+iy)}{y}>\frac{1}{t_n^{\rm E}(\tau)},\]
	which is valid for all \(\tau\) in a compact subset and all points \(x+iy\) lying in \(S_2^{\frac{1}{3}}(\delta)\) if \(n\) is large enough. Now, the statement about the graph of \(y_{t_n^{\rm E}(\tau), \mu_n}\) follows directly from the definition of $y_{t_n^{\rm E}(\tau)}$ in \eqref{def:y}, proving part 1 of the proposition.

	For part 2, we first consider the behavior of the graph of \(y_{t_n^{\rm E}(\tau), \mu_n}\). Writing \(z=x+iy = x_n^* + n^{-1/4+\epsilon} e^{i\theta}\) we get with Lemma \ref{lemma:expansion} for \(\theta \in \left( \delta, \frac{\pi}{3}-\delta\right) \cup \left(\frac{2}{3}\pi +\delta, \pi -\delta \right)\) with a fixed $0<\d<\frac\pi3$
	\begin{align*}-&\frac{\Im G_{\mu_n}(x+iy)}{y}=\frac{1}{t_n^{\rm E}(\tau)}
		-\frac{G_n^{(3)}}{6}n^{-1/2+ 2\epsilon} \frac{\sin 3\theta}{\sin \theta}\\
		&\times \left(1-\frac{2^{1/3} \tau t_{\rm cr} 6 \sin \theta }{t_n^{\rm E} (\tau) G_n^{(3)} \sin 3\theta} n^{1/6 -2\epsilon} \left(G_n^{(2)} \right)^{2/3}+\frac{3 \sin 2\theta}{G_n^{(3)} \sin 3\theta} n^{1/4 - \epsilon} G_n^{(2)}+ \mathcal{O} \left(n^{-1/4 +\epsilon}\right) \right),
	\end{align*}
	as \(n\to\infty\), uniformly with respect to \(\tau\) in a compact subset and in \(\theta\) coming from the above union of intervals. It follows from the assumptions that 
	\[n^{1/4 - \epsilon} G_n^{(2)} \to 0,\qquad n^{1/6 -2\epsilon} \left(G_n^{(2)} \right)^{2/3} =\left(n^{1/4-3\epsilon} G_n^{(2)}\right)^{2/3} \to 0,\qquad n\to\infty.\]
	Hence, we can conclude that
	\[-\frac{\Im G_{\mu_n}(x+iy)}{y}=\frac{1}{t_n^{\rm E}(\tau)}
	-\frac{G_n^{(3)}}{6}n^{-1/2+ 2\epsilon} \frac{\sin 3\theta}{\sin \theta}\left(1+o(1)\right),\]
	as \(n\to\infty\), uniformly with respect to \(\tau\) in a compact subset and in \(\theta\) coming from the above union of intervals. From this we obtain the inequality
	\[-\frac{\Im G_{\mu_n}(x+iy)}{y}>\frac{1}{t_n^{\rm E}(\tau)},\]
	which is valid for all \(\tau\) in a compact subset and all points \(x+iy\) lying in \(S_1^{\frac{1}{4}}(\delta)\) if \(n\) is large enough. 
	
	On the other hand, for the angle \(\theta \in \left( \frac{\pi}{3}+\delta, \frac{2}{3} \pi-\delta\right)\) we obtain the inequality
	\[-\frac{\Im G_{\mu_n}(x+iy)}{y}<\frac{1}{t_n^{\rm E}(\tau)},\]
	which is valid for all \(\tau\) in a compact subset and all points \(x+iy\) lying in \(S_2^{\frac{1}{4}}(\delta)\) if \(n\) is large enough. From this the statement about the location of the graph of \(y_{t_n^{\rm E}(\tau), \mu_n}\) for large \(n\) follows. 
	
	Finally we deal with the graph of \(y_{t_n^{\rm M}(\tau), \mu_n}\). From the definitions of $t_{\rm cr}$, $c_3$ and  $t_n^{\rm M}(\tau)$ in \eqref{def:t_cr} and \eqref{def:tnW} we get
	\[-G_n^{(1)}= \frac{1}{t_n^{\rm M}(\tau)} +\frac{\tau t_{\rm cr}}{t_n^{\rm M}(\tau) n^{1/2}} \left(-\frac{G_n^{(3)}}{6}\right)^{1/2}=\frac{1}{t_n^{\rm M}(\tau)}+\mathcal{O}\left(n^{-1/2}\right),\]
	as \(n\to\infty\), uniformly with respect to \(\tau\) in compact subsets. Hence, with \(z=x+iy = x_n^* + n^{-1/4+\epsilon} e^{i\theta}\) and \(\theta \in \left( \delta, \frac{\pi}{3}-\delta\right) \cup \left(\frac{2}{3}\pi +\delta, \pi -\delta \right)\), we obtain from Lemma \ref{lemma:expansion}
	\begin{align*}-&\frac{\Im G_{\mu_n}(x+iy)}{y}=\frac{1}{t_n^{\rm M}(\tau)}
		-\frac{G_n^{(3)}}{6}n^{-1/2+ 2\epsilon} \frac{\sin 3\theta}{\sin \theta} \left(1+\frac{3 \sin 2\theta}{G_n^{(3)} \sin 3\theta} n^{1/4 - \epsilon} G_n^{(2)}+ \mathcal{O} \left(n^{-2\epsilon}\right) \right).
	\end{align*}
	By the assumption that \(n^{1/4 - \epsilon} G_n^{(2)} \to 0\) we get
	\[-\frac{\Im G_{\mu_n}(x+iy)}{y}=\frac{1}{t_n^{\rm M}(\tau)}
	-\frac{G_n^{(3)}}{6}n^{-1/2+ 2\epsilon} \frac{\sin 3\theta}{\sin \theta}\left(1+o(1)\right),\]
	as \(n\to\infty\), uniformly with respect to \(\tau\) in a compact subset and \(\theta\) restricted to the above union of intervals. From this we obtain the inequality
	\[-\frac{\Im G_{\mu_n}(x+iy)}{y}>\frac{1}{t_n^{\rm M}(\tau)},\]
	which is valid for all \(\tau\) in a compact subset and all points \(x+iy\) lying in \(S_1^{\frac{1}{4}}(\delta)\) if \(n\) is large enough. 
	
	On the other hand, for \(\theta \in \left( \frac{\pi}{3}+\delta, \frac{2}{3} \pi-\delta\right)\) we obtain
	\[-\frac{\Im G_{\mu_n}(x+iy)}{y}<\frac{1}{t_n^{\rm M}(\tau)},\]
	which is valid for all \(\tau\) in a compact subset and all points \(x+iy\) lying in \(S_2^{\frac{1}{4}}(\delta)\) if \(n\) is large enough. From this we finally can read off the statement about the location of the graph of \(y_{t_n^{\rm M}(\tau), \mu_n}\) for large \(n\). 
\end{proof}

\section{Proof of Theorem \ref{thrm_main} for the Airy case} \label{Sec:proofAiry}

In this section we start the proof of Theorem \ref{thrm_main}. The first part deals with the Airy case, where we have to investigate the rescaled correlation kernel 
\begin{align}\label{eq:rescaledkernelAiry}
	\frac{1}{c_2n^{2/3}}\tilde K_{n,t_{n}^{\rm E}(\t_1),t_{n}^{\rm E}(\t_2)}\left(x^{*}_n (t_n^{\rm E} (\t_1))+\frac{u}{c_2n^{2/3}}, x^{*}_n( t_n^{\rm E} (\t_2))+\frac{v}{c_2n^{2/3}}\right).
\end{align}
Here \(\tilde K_{n,s,t}\) is the gauged kernel 
\begin{align*}
	\tilde K_{n,s,t}(x,y)=K_{n,s,t}(x,y)\exp({f_n(t,y)-f_n(s,x)}),
\end{align*}
where the gauge factors \(f_n\) are defined in \eqref{gauge_factor}, the kernel \(K_{n,s,t}\) is given in \eqref{def:Kn},  and
\begin{equation*}c_2=\frac1{t_{\rm cr}}\lb \frac{G_{n}^{(2)}}2\rb^{-1/3},\qquad
	t_{n}(\t)=t_{n}^{\rm E}(\t)=t_{\rm cr}+\frac{2\tau}{c_2^2  n^{1/3}}.
\end{equation*}
We drop the superscript E in the time parameter throughout this section (as we only consider this parameterization). Moreover, we recall that in the Airy case we assume \(I_n = n^{1/4}\frac{G_n^{(2)}}{2} \to +\infty\) as \(n\to \infty.\)

We observe first that it is sufficient to focus on the case \(\t_1 \leq \t_2\),  the convergence of the rescaled heat kernel part in \eqref{def:Kn} follows from a simple computation. Thus, we have to deal with the double complex contour integral appearing in  \eqref{eq:rescaledkernelAiry}, which we write as
\begin{align}\label{eq:AiryInt}\frac{n^{1/3}e^{f_n^*(\t_2,v)-f_n^*(\t_1,u)}}{c_2 (2\pi i)^2 \sqrt{t_n(\t_1)t_n(\t_2)}} \int_{x_0+i\R} dz \int_{\Gamma} dw ~ \frac{e^{\phi_{n,\t_2} (z,v) - \phi_{n,\t_1} (w,u)}}{z-w},
\end{align}
abbreviating the phase function of the rescaled kernel as
\begin{align}
	\phi_{n,\tau_2}(z,v) := \frac{n}{2 t_n(\tau_2)}\left(z-x_n^* \left(t_n (\tau_2)\right)-\frac{v}{c_2 n^{2/3}}\right)^2 +n g_{\mu_n}(z)\label{def:phi_A}
\end{align}
with the log-transform $g_{\mu_n}$ being defined in \eqref{def:g}, and 
\begin{align}\label{gaugeAiry}
	f_n^*(\t,u):=f_n\lb t_n(\t),x_n^*(t_n(\t))+\frac{u}{c_2n^{2/3}}\rb
\end{align}
is the rescaled gauge factor from \eqref{gauge_factor}.

\subsection{Choice of contours and preparations}
In order to prepare the expression \eqref{eq:AiryInt} for the asymptotic analysis, in this subsection we are mainly concerned with an appropriate choice of the contours of integration. Due to the multiple $n$-dependence of the phase function \(\phi_{n,\tau}\) this requires some care.
Our analysis will not use the saddle points of \(\phi_{n,\tau}\) explicitly, however it is instructive for the choice of contours to look at two facts that can be found in \cite[Section 3.1]{CNV2}: the saddle points of the integrand (the critical points of the phase function \(\phi_{n,\tau}\)) in both of the variables \(z\) and \(\w\) lie on the graph of the function $y_{t_n(\t),\mu_n}$ introduced in \eqref{def:y}. Moreover, a further analysis of an extension of the homeomorphism in \eqref{homeo} to the complex plane reveals that these saddles are located in \(n\)-dependent vicinities of the point \(x_n^{*}\). However, it is cumbersome to consider suitable descent or ascent paths in both variables for the phase function passing through the saddle points exactly. Therefore we will explicitly construct paths that pass by the relevant points close enough for the asymptotic analysis and have in particular an appropriate crossing behavior.
 
By analyticity, we change the contour in the \(z\)-variable in \eqref{eq:AiryInt} to a contour  \(\Sigma\) which is a straight vertical line with a local modification close to the point \(x_n^{*}\). To this end, for some \(0<\epsilon<\frac{1}{20}\) we define $\hat z_n:=x_n^*+n^{-1/4+\e}e^{\frac{7i\pi}{16}}$ and 
\begin{align}
	&\Sigma:=(\overline{\hat z_n}-i\infty,\overline{\hat z_n}]\cup[\overline{\hat z_n},x_n^*]\cup[x_n^*,\hat z_n]\cup[\hat z_n,\hat z_n+i\infty),\label{def:Sigma}
\end{align}
where for points $z_1,z_2\in\C$ we denote by $[z_1,z_2]$ the straight segment from $z_1$ to $z_2$ and we use the notation $(z_1,z_2]$ and $[z_1,z_2)$ for the half-lines between $z_1$ and $z_2$ if $z_1$ or $z_2$ are points at infinity, respectively. 
We fix the orientation from bottom to top, see Figure \ref{figure:contour_Sigma_Airy} for a visualization, and we remark that the specific value $\frac{7\pi}{16}$ is chosen so that we have $\Re (z-x_n^*)^3<0$ and $\Re(z-x_n^*)^4>0$ for all $z\in\Sigma\cap D_n^{1/4}$ (as given in Proposition \ref{proposition:density}). 
\begin{figure}[h]
	\centering
	\def\svgwidth{0.4\columnwidth}
	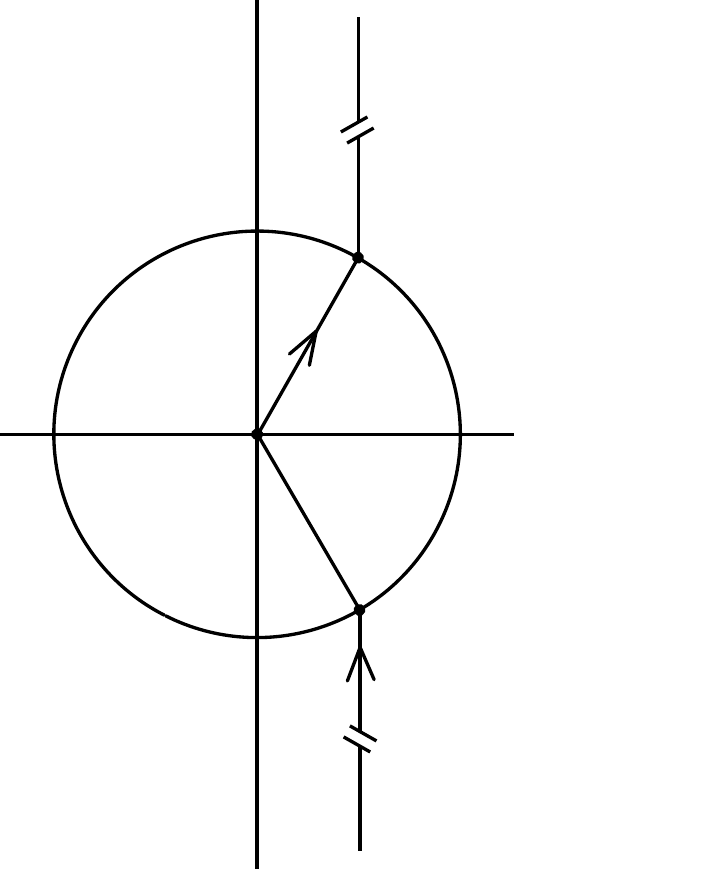
	\caption{Choice of contour $\Sigma$}\label{figure:contour_Sigma_Airy}
\end{figure}
The choice of the $w$-contour $\Gamma$ is more complicated and will depend on the speed of divergence of \(n^{1/4} G_n^{(2)} \to \infty\), as \(n\to\infty\). We will call the situation \(n^{1/4} G_n^{(2)} \gg n^{\gamma} \) for some \(\gamma >0\)  the \emph{fast Airy case}, and the situation  \(1\ll n^{1/4} G_n^{(2)} \ll n^{\gamma} \) for all \(\gamma >0\) the \emph{slow Airy case}. It is known (see \cite[Lemma 3.3 (1)]{CNV2}) that the graph of $x\mapsto y_{t_n(\t_1),\mu_n}(x)$, embedded in the closed upper half plane of $\C$, is for every $u,\t_1$ an ascent path for $w\mapsto\phi_{n,\t_1}(w,u)$ emanating from the saddle point located on this graph. As already indicated, avoiding convergence issues with the double contour integral in \eqref{eq:AiryInt}, for the global part of \(\Gamma\) we use the graph of \(y_{t_n(\t_1),\mu_n}(x)\), but we will introduce local modifications in the vicinity of \(x_n^{*}\).
To motivate and prepare these modifications, we first apply Lemma \ref{lemma:expansion} to the phase function $\phi_{n,\tau_1}(w,u)$, giving via term-by-term integration
\begin{align}
	&\phi_{n,\tau_1}(w,u)=\frac{n}{2 t_n(\tau_1)}\left(w-x_n^* \left(t_n (\tau_1)\right)-\frac{u}{c_2 n^{2/3}}\right)^2 +n g_{\mu_n}(x_n^*)+n\int_{x_n^*}^wG_{\mu_n}(\xi)d\xi\\
	&=\frac{n}{2 t_n(\tau_1)}\left(w-x_n^* \left(t_n (\tau_1)\right)-\frac{u}{c_2 n^{2/3}}\right)^2+n g_{\mu_n}(x_n^*)+nG_n^\0(w-x_n^*)+\frac{nG_n^\1}2(w-x_n^*)^2\\
	&+\frac{nG_n^\2}6(w-x_n^*)^3+\frac{nG_n^\3}{24}(w-x_n^*)^4+\O(n(w-x_n^*)^5),\label{expansion_phi}\quad\quad n\to\infty,
\end{align}
valid uniformly in $w\in D_n^{1/4}$, where $D_n^{1/4}\subset \C$ is the closed disk centered in $x_n^*$ with radius $n^{-1/4+\e}$ and \(\e>0\) chosen sufficiently small as above. We see from \eqref{expansion_phi} that $\phi_{n,\t_1}$ is, up to a small error, a quartic polynomial in $w\in D_n^{1/4}$, and we will see later that for $w-x_n^*$ of the order \(r_n\) larger than $(nG_n^\2)^{-1/3}$ but smaller than $n^{-1/4}$, the third power term in \eqref{expansion_phi} dominates. With regard to the envisaged limiting kernel in \eqref{extended_Airy}, this indicates that the asymptotic main contribution to the integral is coming from neighborhoods of \(x_n^{*}\) of size \(r_n\).

\subsubsection*{The contour \(\Gamma\) in the fast Airy case}
We will start with the $w$-contour in the fast Airy case, in which we have $n^{1/4}G_n^\2\gg n^\g$ for some $\g>0$, meaning we can separate the two scales \(\left(nG_n^{(2)}\right)^{-1/3}\) and \(n^{-1/4}\) by a suitable sequence \(r_n\) in terms of a power of \(n\). To this end, we choose 
\begin{align}
	r_n =(nG_n^\2)^{-1/3}n^{\g/6}\label{def:rn}
\end{align} 
such that we have  $(nG_n^\2)^{-1/3}\ll r_n\ll n^{-1/4}n^{-\g/6}$, and thus $r_n$ is bounded well away from both scales. Now we look at the closed disk centered at $x_n^*$ of radius $r_n$, denoted by $D_n^{1/3}$. From Proposition \ref{proposition:density} we know that the graph of $y_{t_n(\t_1),\mu_n}$ enters $D_n^{1/3}$ coming from the right at some point $w_{1,n}$ about which we know that for any small $\d>0$ for $n$ sufficiently large we have $0\leq\arg(w_{1,n}-x_n^*)<\d$. If there are several such points, we choose as $w_{1,n}$ the left-most of them, and we choose from now on a fixed $0<\d<\pi/6$.

The contour $\Gamma$ is now defined as follows: We start at a real point that lies to the right of the right-most edge of the support of $y_{t_n(\t_1),\mu_n}$ and right of $D_n^{1/3}$ (the exact position of the starting point does not matter). We then follow the real line to the left until we either meet $D_n^{1/3}$ or the support of the graph of $y_{t_n(\t_1),\mu_n}$, that means the point at which the graph becomes positive. If we meet $D_n^{1/3}$ first, we follow the real line further to $x_n^*$. If we meet the support of $y_{t_n(\t_1),\mu_n}$ first, we follow this graph to the left to the point $w_{1,n}$. From $w_{1,n}$ we go vertically down to the real line to the point $\Re w_{1,n}$ and then follow the line to $x_n^*$. In both cases, we go from $x_n^*$ straight to 
\begin{align}
	w_{2,n}:=x_n^*+r_ne^{2\pi i/3}\in\partial D_n^{1/3}.
\end{align} 
From $w_{2,n}$ we then go vertically up to 
\begin{align}
	w_{3,n}:=\Re w_{2,n}+iy_{t_n(\t_1),\mu_n}(\Re w_{2,n}).
\end{align}
Note here that by Proposition \ref{proposition:density} we have $\Im w_{3,n}\geq\Im w_{2,n}$ as the graph of $y_{t_n(\t_1),\mu_n}$ lies above this part of the disk $D_n^{1/3}$. The point $w_{3,n}$ lies on the graph of $y_{t_n(\t_1),\mu_n}$ and we follow it to the left-most edge of the support of $y_{t_n(\t_1),\mu_n}$. We close the contour by following the complex conjugate contour back to the starting point. The contour is depicted in Figure \ref{fig:contourGamma_Airy} in the more complicated case that $y_{t_n(\t_1),\mu_n}$ is non-zero also to the right of $D_n^{1/3}$ in which case we may choose the starting point close or even at the right-most point of the support of $y_{t_n(\t_1),\mu_n}$.
\begin{figure}[h]
	\centering
	\footnotesize
	\def\svgwidth{0.7\columnwidth}
	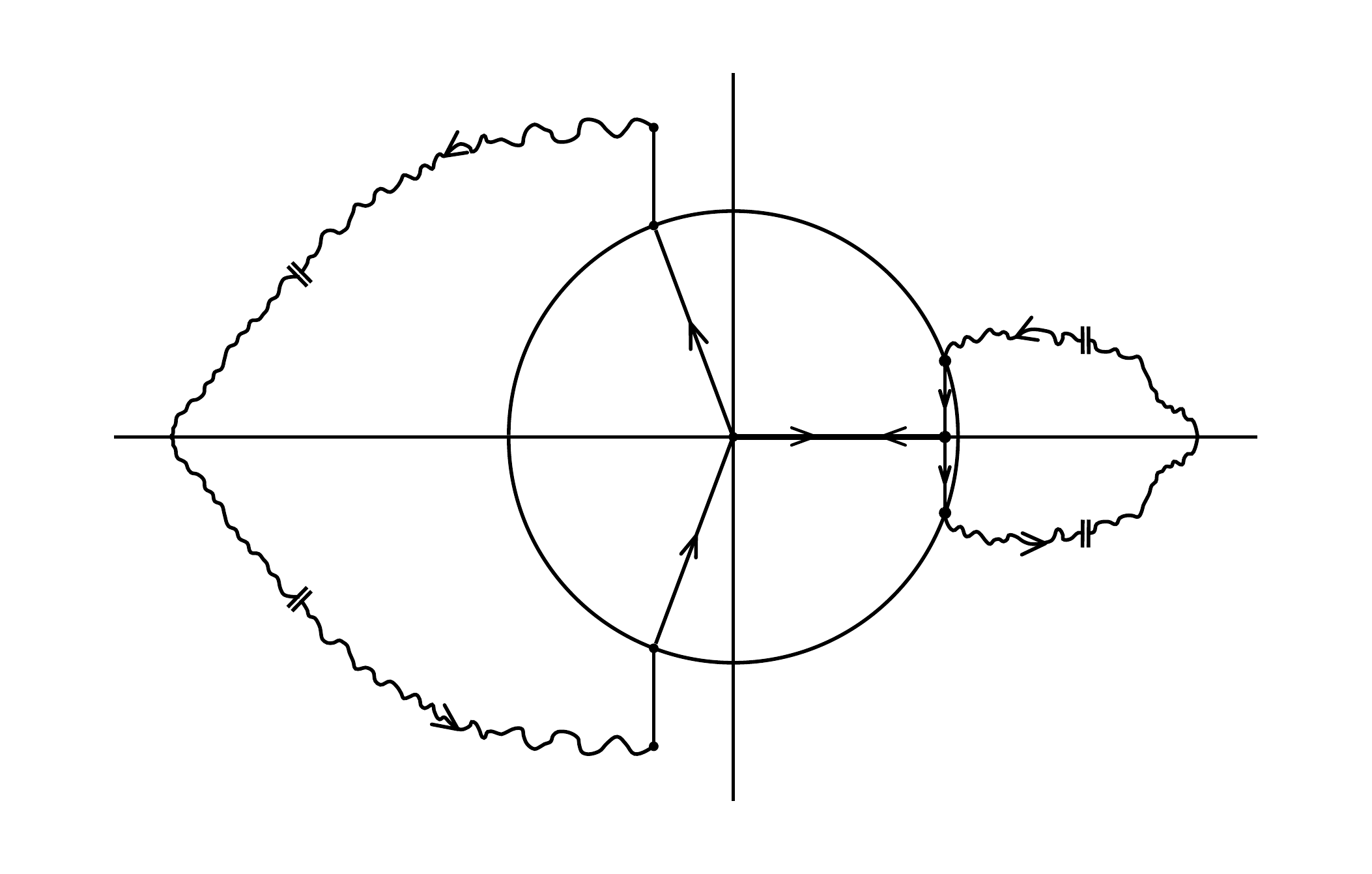
	\caption{Choice of contour $\Gamma$ in the fast Airy case}\label{fig:contourGamma_Airy}
\end{figure}

\subsubsection*{The contour \(\Gamma\) in the slow Airy case}

In the slow Airy case, where $1\ll n^{1/4}G_n^\2\ll n^\g$ for all $\g>0$, we will work on both relevant scales. Let $r_n,\e>0$ be such that $(nG_n^\2)^{-1/3}\ll r_n\ll n^{-1/4}$ and $0<\e<\frac{1}{20}$. The bound \(\frac{1}{20}\) is chosen in order to ensure that errors coming from \eqref{expansion_phi} stay small later in the analysis of the remainder terms. Let $D_n^{1/3}$ and $D_n^{1/4}$ be the closed disks centered at $x_n^*$ with radii $r_n$ and $n^{-1/4+\e}$, respectively. Then by Proposition \ref{proposition:density} (2), there are points $w_{1,n}$ and $w_{5,n}$ on $\partial D_n^{1/4}$ where the graph of $y_{t_n(\t_1),\mu_n}$ intersects $\partial D_n^{1/4}$ coming from the right, and stays in the interior of $D_n^{1/4}$ between these points. Moreover, they satisfy for every fixed $0<\d<\pi/6$ and $n$ large enough
\begin{align}\label{def:slowAirypoints}
	\pi/3-\d<\arg(w_{1,n}-x_n^*)<\pi/3+\d,\quad 2\pi/3-\d<\arg(w_{5,n}-x_n^*)<2\pi/3+\d.
\end{align}
Now, to define the $w$-contour $\Gamma$, we take the right-most point of the support of $y_{t_n(\t_1),\mu_n}$ and follow the graph of $y_{t_n(\t_1),\mu_n}$ to the left until we hit $w_{1,n}$. Note that such a point exists by Proposition \ref{proposition:density} (2). From $w_{1,n}$ we go down vertically to the point $w_{2,n}$ determined by
\begin{align}
	\Re w_{2,n}=\Re w_{1,n},\quad \arg(w_{2,n}-x_n^*)=\pi/7.
\end{align}
The relevance of $w_{2,n}$ and in particular the angle $\pi/7$ is that $w_{2,n}$ lies in a sector, bounded away from its boundaries, where the real part of $w\mapsto (w-x_n^*)^3$ is positive and the real part of $w\mapsto (w-x_n^*)^4$ is negative. From $w_{2,n}$ we go
straight to 
\begin{align}
	w_{3,n}:=x_n^*+r_ne^{i\pi/7}\in\partial D_n^{1/3}.
\end{align}
From $w_{3,n}$ we go down vertically to $\Re w_{3,n}$ and then follow the real line to $x_n^*$. From $x_n^*$ we go straight to
\begin{align}
	w_{4,n}:=x_n^*+r_ne^{i2\pi/3}
\end{align} 
and then go straight to $w_{5,n}$. From $w_{5,n}$ on, we follow the graph of $y_{t_n(\t_1),\mu_n}$ to its left-most support point and from there we close the contour by following its complex conjugate back to $x_n$. We remark that it would also be sufficient to directly go straight from $w_{1,n}$ to $w_{3,n}$ but the detour via $w_{2,n}$ makes the subsequent analysis more transparent. The contour $\Gamma$ is shown in Figure \ref{fig:contour_Gamma_slow}. 
\begin{figure}[h]
	\centering
	\footnotesize
	\def\svgwidth{0.7\columnwidth}
	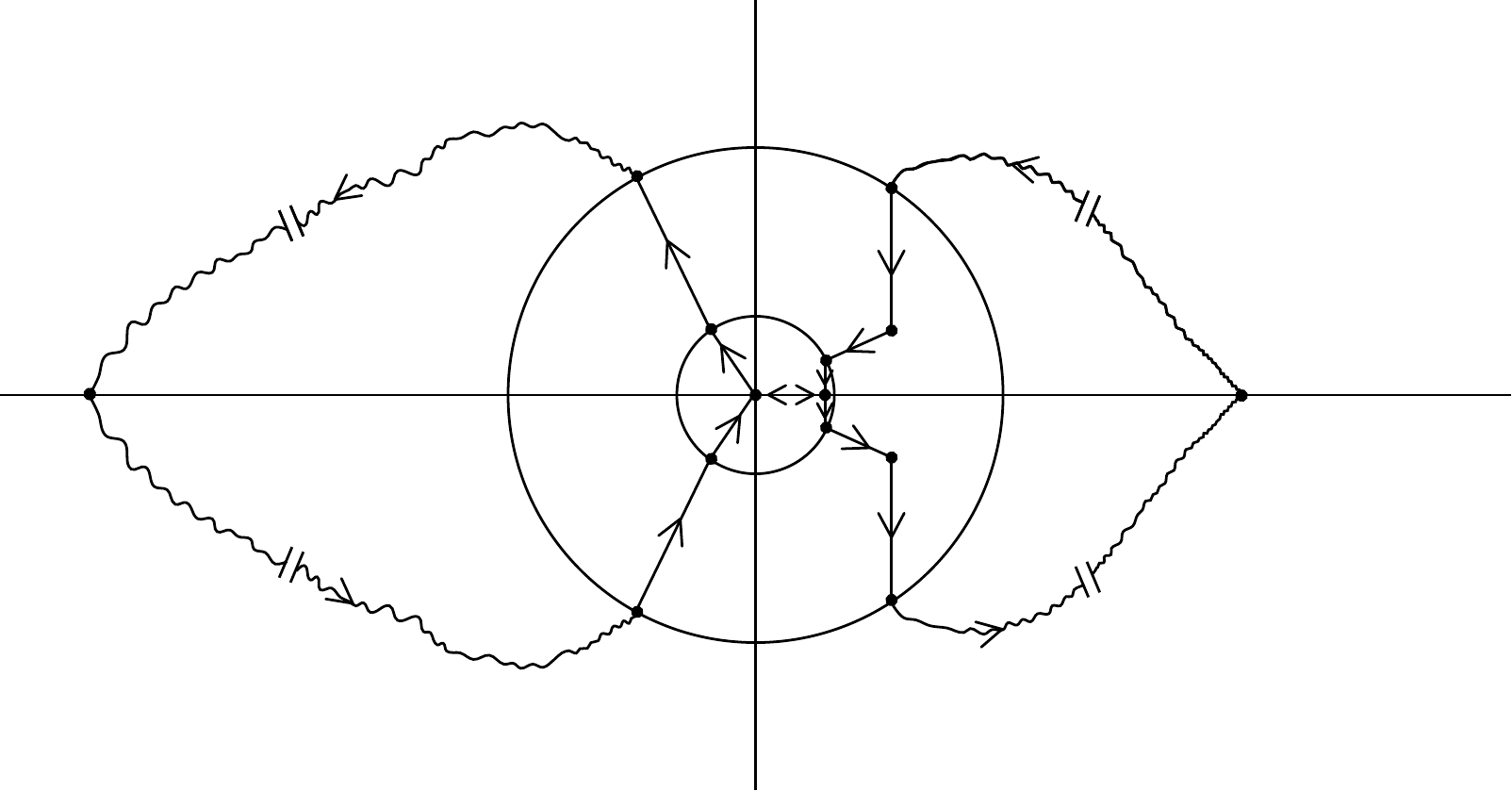
	\caption{Choice of contour $\Gamma$ in the slow Airy case}\label{fig:contour_Gamma_slow}
\end{figure}

\subsubsection*{Splitting the kernel}
Let us introduce some notation in further preparation of the proof of Theorem \ref{thrm_main} in the Airy case, so that we can treat both fast and slow Airy cases together as closely as possible. Using the above constructed paths, we now have

\begin{multline}
	\frac{1}{c_2 n^{2/3}} \tilde{K}_{n,t_n(\t_1),t_n(\t_2)}\left(x_n^*(t_n(\t_1))+\frac{u}{c_2 n^{2/3}}, x_n^*(t_n(\t_2))+\frac{v}{c_2 n^{2/3}}\right)\\\label{rescaledkernel_Airy}
	=  \frac{n^{1/3}e^{f_n^*(\t_2,v)-f_n^*(\t_1,u)}}{c_2 (2\pi i)^2 \sqrt{t_n(\t_1)t_n(\t_2)}} \int_\Sigma dz \int_{\Gamma} dw ~ \frac{e^{\phi_{n,\t_2} (z,v) - \phi_{n,\t_1} (w,u)}}{z-w},
\end{multline}
where $\phi_{n,\t}$ has been introduced in \eqref{def:phi_A} and the rescaled gauge functions \(f_n^*(\t,u)\) have been defined in \eqref{gaugeAiry}. We remark that the two contours $\Sigma$ and $\Gamma$ now intersect at $x_n^*$ and thus violate the non-intersection condition imposed for \eqref{def:Kn}. It is however easily seen that thanks to the explicit crossing behavior of the contours, the singularity at $x^*_n$ is integrable and the representation is readily shown to be valid by taking a suitable limit.
The leading contribution in the limit $n\to\infty$ will be provided by the double integral restricted to the parts of the contours that lie inside of $D_n^{1/3}$. To define this, we set
\begin{align}
	&\Gamma_{\rm in}:=\Gamma\cap D_n^{1/3},\quad \Sigma_{\rm in}:=\Sigma\cap D_n^{1/3}\label{def:GammaSigma_in},\\
	&\Gamma_{\rm out}:=\lb\Gamma\setminus \Gamma_{\rm in}\rb,\quad \Sigma_{\rm out}:=\lb\Sigma\setminus \Sigma_{\rm in}\rb\label{def:GammaSigma_out}.
\end{align}
We will see below that the main contribution of \eqref{rescaledkernel_Airy} comes from
\begin{align}
	&\label{def:Kn1_A}{K}_{n,\t_1,\t_2}^{(1)}(u,v):=\frac{n^{1/3}e^{f_n^*(\t_2,v)-f_n^*(\t_1,u)}}{c_2 (2\pi i)^2 \sqrt{t_n(\t_1)t_n(\t_2)}} \int_{\Sigma_{\rm in}} dz \int_{\Gamma_{\rm in}} dw ~ \frac{e^{\phi_{n,\t_2} (z,v) - \phi_{n,\t_1} (w,u)}}{z-w}.
\end{align}
We split the remaining part of the kernel into
\begin{align}
	&\label{def:Kn2_A}{K}_{n,\t_1,\t_2}^{(2)}(u,v):=\frac{n^{1/3}e^{f_n^*(\t_2,v)-f_n^*(\t_1,u)}}{c_2 (2\pi i)^2 \sqrt{t_n(\t_1)t_n(\t_2)}} \int_{\Sigma} dz \int_{\Gamma_{\rm out}} dw ~ \frac{e^{\phi_{n,\t_2} (z,v) - \phi_{n,\t_1} (w,u)}}{z-w},\\
	&\label{def:Kn3_A}{K}_{n,\t_1,\t_2}^{(3)}(u,v):=\frac{n^{1/3}e^{f_n^*(\t_2,v)-f_n^*(\t_1,u)}}{c_2 (2\pi i)^2 \sqrt{t_n(\t_1)t_n(\t_2)}} \int_{\Sigma_{\rm out}} dz \int_{\Gamma_{\rm in}} dw ~ \frac{e^{\phi_{n,\t_2} (z,v) - \phi_{n,\t_1} (w,u)}}{z-w},
\end{align}
hence, we have
\begin{align}
	&\frac{1}{c_2 n^{2/3}} \tilde{K}_{n,t_n(\t_1),t_n(\t_2)}\left(x_n^{*}(t_n(\tau_1))+\frac{u}{c_2 n^{2/3}}, x_n^{*}(t_n(\tau_2))+\frac{v}{c_2n^{2/3}}\right)\\
	&={K}_{n,\t_1,\t_2}^{(1)}(u,v)+{K}_{n,\t_1,\t_2}^{(2)}(u,v)+{K}_{n,\t_1,\t_2}^{(3)}(u,v).
\end{align} 

\subsection{Local analysis of the main part}
In this subsection we will prove, simultaneously for the fast and slow Airy cases, that the kernel part $K_{n,\t_1,\t_2}^\1$ gives the Airy kernel in the large $n$ limit. 

\begin{prop}\label{prop:local_Airy}
	We have for every $\s\geq0$ 
	\begin{align}
		{K}_{n,\t_1,\t_2}^{(1)}(u,v)=\mathbb K^{\rm Ai}_{\t_1,\t_2}(u,v)+o(e^{-\s(u+v)}),
	\end{align}
	\(n\to\infty\), where the $o$-term is uniform for $u,v\in[-M,M \tilde r_n]$ with 
	\begin{align}
		\tilde r_n:=r_n(nG_n^\2)^{1/3}\label{def:tilde_r_n},
	\end{align} 
	for fixed $M>0$ and uniform for $\t_1,\t_2$ in compacts, and $r_n$ is given in \eqref{def:rn}.
	
	If $n^{1/4}G_n^\2\gg n^\g$ for some $\g>0$, then the $o$-term can be replaced by $\O(e^{-\s(u+v)}n^{-\e_0})$, for some $\e_0>0$, and the convergence is uniform for $u,v\in[-M,Mn^{\e_0}]$ for every fixed $M>0$ .
\end{prop}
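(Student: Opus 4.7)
The plan is to rescale the inner integration variables via $z = x_n^* + W_0\z$ and $w = x_n^* + W_0\w$ with $W_0 := 2^{1/3}(nG_n^\2)^{-1/3}$, and to show that the rescaled integrand converges to that of $\mathbb K^{\Ai}_{\t_1,\t_2}(u,v)$. The prefactor $n^{1/3}/(c_2(2\pi i)^2\sqrt{t_n(\t_1)t_n(\t_2)})$, together with the net Jacobian factor $W_0$ coming from $dz\,dw/(z-w)$, collapses to $(2\pi i)^{-2}(1+\O(n^{-1/3}))$: using the identity $c_2 t_{\rm cr} = 2^{1/3}(G_n^\2)^{-1/3}$ one checks directly that $n^{1/3}W_0/c_2 = t_{\rm cr}$, which cancels the $\sqrt{t_n(\t_1)t_n(\t_2)}\to t_{\rm cr}$ in the denominator.

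For the phase, I integrate Lemma \ref{lemma:expansion} term-by-term to expand $g_{\mu_n}$ around $x_n^*$ up to fourth order with $\O((z-x_n^*)^5)$ error. Substituting this into \eqref{def:phi_A} and exploiting the two identities $G_n^\1=-1/t_{\rm cr}$ and $1/t_n(\t_2)-1/t_{\rm cr}=-2\t_2/(c_2^2 n^{1/3}t_n(\t_2)t_{\rm cr})$, the linear-in-$(z-x_n^*)$ contributions produced by expanding $(z-x_n^*(t_n(\t_2)))^2$ cancel against $nG_n^\0(z-x_n^*)$, and the quadratic-in-$(z-x_n^*)$ contributions collapse to a single multiple of $\t_2$. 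After the substitution $z=x_n^*+W_0\z$ and using the scaling identity above, what survives is
\begin{align*}
\phi_{n,\t_2}(x_n^* + W_0\z, v) = C_n(\t_2,v) + \frac{\z^3}{3} - \t_2\z^2 - v\z + o(1),
\end{align*}
pointwise in $\z,v,\t_2$, and analogously $\phi_{n,\t_1}(x_n^*+W_0\w,u) = C_n(\t_1,u) + \w^3/3 - \t_1\w^2 - u\w + o(1)$. The constant difference $C_n(\t_2,v)-C_n(\t_1,u)$ gathers $ng_{\mu_n}(x_n^*)$ (which cancels between the $z$ and $w$ sides), the terms involving $nG_n^\0 x_n^*(t_n(\t))$, $un^{1/3}G_n^\0/c_2$ and $n t_n(\t)(G_n^\0)^2/2$; a direct computation using \eqref{gauge_factor} shows that these are precisely matched by $f_n^*(\t_2,v)-f_n^*(\t_1,u)$.

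For error control, the quartic Taylor term gives $nG_n^\3 W_0^4\z^4/24 = \O(|\z|^4(nG_n^\2)^{-1/3})$ and the fifth-order remainder contributes $\O(|\z|^5(nG_n^\2)^{-2/3}n^{-1/3})$. On the rescaled in-contour, of radius $\tilde r_n=r_n(nG_n^\2)^{1/3}$, the leading error is $\O(\tilde r_n^4(nG_n^\2)^{-1/3}) = \O(r_n^4 nG_n^\2)$; in the fast Airy case, the explicit choice \eqref{def:rn} makes this $\O(n^{-\e_0})$ for some $\e_0>0$, and in the slow case $r_n\ll n^{-1/4}$ forces it to be $o(1)$. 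The choice of angles in the construction of $\Sigma$ and $\Gamma$ (namely $7\pi/16$ for $\Sigma$, and $\pi/7$, $2\pi/3$ for the inner arcs of $\Gamma$) together with Proposition \ref{proposition:density} yield the descent estimates $\Re(\z^3/3)\leq -c|\z|^3$ on $\Sigma_{\rm in}$ and $\Re(-\w^3/3)\leq -c|\w|^3$ on $\Gamma_{\rm in}$, ensuring super-exponential decay of the integrand off the origin. Dominated convergence then gives convergence to the double contour integral defining $\mathbb K^{\Ai}$, the rescaled in-contours filling out $\Sigma^{\Ai}$ and $\Gamma^{\Ai}$ up to an exponentially small tail because $\tilde r_n\to\infty$. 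The uniform decay $o(e^{-\s(u+v)})$ is inherited from the linear factors $e^{u\w-v\z}$ evaluated along the limiting Airy contours.

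The main obstacle is the slow Airy case, where $(nG_n^\2)^{-1/3}$ and $n^{-1/4}$ are separated only by a sub-polynomial factor. There the remainder estimates are only $o(1)$ rather than $\O(n^{-\e_0})$, and the claimed uniformity up to $u,v\in[-M,M\tilde r_n]$ has to be verified with care: the cubic descent gain on the rescaled in-contour has to be traded against the quartic and fifth-order error growth on a disk that expands only sub-polynomially, and the resulting balance must be delicate enough to preserve the exponential factor $e^{-\s(u+v)}$ throughout the range of $u,v$.
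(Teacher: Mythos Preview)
Your outline follows the paper's approach: the same rescaling $W_0 = c_2 t_{\rm cr}/n^{1/3}$, the same phase expansion via Lemma \ref{lemma:expansion}, and the same cancellation of constants with the gauge factors (note, though, that the cancellation leaves a residual $\O((u^2+v^2)n^{-1/3})$ term, not zero; this matters for the uniformity in growing $u,v$). However, three points are genuine gaps rather than details to be filled in routinely.

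First, the $e^{-\s(u+v)}$ decay is not ``inherited from the linear factors $e^{u\w-v\z}$ evaluated along the limiting Airy contours.'' On $\Gamma^{\Ai}$ and the $\z$-contour through the origin, $\Re\w$ and $\Re\z$ vanish at $0$, so $e^{u\Re\w}$ and $e^{-v\Re\z}$ are merely bounded there, not decaying in $u,v$. The paper obtains the factor by deforming to the shifted contours $\Gamma^{\Ai}_\s$ and $\tilde\Sigma^{\Ai}_\s$ (connected by vertical segments at $\Re\w=-\s$ and $\Re\z=\s$), on which $\Re\w\leq-\s$ and $\Re\z\geq\s$ everywhere; only then is $e^{\s(u+v)}$ times the modulus of the integrand uniformly integrable for $u,v\geq -M$. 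Without this contour shift the claimed exponential factor cannot be extracted uniformly over $u,v\in[-M,M\tilde r_n]$.

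Second, the rescaled $\Gamma_{\rm in}$ is not simply $\Gamma^{\Ai}\cap\tilde D_n^{1/3}$: after the doubled real segments cancel, there remains the vertical piece $\Gamma_{\rm right}$ connecting $w_{j,n}$ (with $j=1$ fast, $j=3$ slow) to the real axis, together with its conjugate. The paper removes $\Gamma_{\rm right}$ by a separate estimate, using that it sits at distance $\sim\tilde r_n\to\infty$ from $0$ in a sector where $\Re\w^3>0$, so the cubic term forces a contribution of order $\O(\tilde r_n e^{-c\tilde r_n^3})$ uniformly in $u\in[-M,M\tilde r_n]$. Third, ``dominated convergence'' is not the right mechanism, since the integration domain expands with $n$ and the integrand is $n$-dependent. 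The paper instead packages the phase error as a single function $h_n=o(1)$ uniformly on $\tilde D_n^{1/3}$, applies $|e^{h_n}-1|\leq|h_n|e^{|h_n|}$, and bounds the difference by $|h_n|$ times one fixed convergent integral (on the $\s$-shifted contours). This is what makes the slow-Airy balance you flag at the end actually go through; your sketch identifies the difficulty but does not supply the device that resolves it.
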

\begin{proof}
	As the parts \({K}_{n,\t_1,\t_2}^{(1)}(u,v)\) are very similar in both Airy cases,  we will give the details of the proof for the statement in the slow Airy case.  The statement about the error terms in the fast Airy case, in which we have \(r_n =(nG_n^\2)^{-1/3} n^{\gamma/6}\), is then straightforward to validate.
	In the treatment of the slow case, we first recall that \(r_n\) is an arbitrary sequence satisfying \(\left(n G_n^{(2)}\right)^{-1/3} \ll r_n \ll n^{-1/4}\), meaning that \(1\ll \tilde r_n \ll \left( n^{1/4} G_n^{(2)}\right)^{1/3}\) with \(\tilde r_n\) defined in \eqref{def:tilde_r_n}. We note that all error terms will be uniform for $\t_1,\t_2$ in compacts, and wherever not stated explicitly, the $\O,o$-terms are to be understood with respect to \(n\to\infty\).

	We begin by making the change of variables inside the disk $D_n^{1/3}$ 
	\begin{align}
		\z:=\frac{n^{1/3}(z-x_n^*)}{c_2 t_{\rm cr}}=\frac{z-x_n^*}{(nG_n^\2)^{-1/3}2^{1/3}},\quad \w:=\frac{n^{1/3}(w-x_n^*)}{c_2 t_{\rm cr}}.\label{change_of_variables_Airy}
	\end{align}
	Thus, for $z,w\in D_n^{1/3}$ we have (via a conformal mapping) $\z,\w\in \tilde D_n^{1/3}:=\{\xi\in\C:\lv \xi\rv\leq \tilde r_n2^{-1/3}\}$, where we have $\tilde r_n\to\infty$.  Moreover, we have (recalling $G_n^\1=-t_{\rm cr}^{-1}$)
	\begin{align}
		\frac{n^{1/3}c_2^2t_{\rm cr}^2\w^2}2\lb\frac1{t_n(\t_1)}+G_n^\1\rb=-\w^2\t_1\frac{t_{\rm cr}}{t_n(\t_1)}=-\w^2\t_1+\O\lb\frac{\w^2\t_1(G_n^\2)^{2/3}}{n^{1/3}}\rb.
	\end{align}
	Now, using this together with expansion \eqref{expansion_phi}, we obtain after some algebra
	\begin{align}
		&\phi_{n,\t_2}(z,v)-\phi_{n,\t_1}(w,u)+f_n^*(\t_2,v)-f_n^*(\t_1,u)\\
		&=\frac{\z^3}3-\t_2\z^2-v\z-\frac{\w^3}3+\t_1\w^2+u\w +\O\lb\frac{u^2+v^2}{n^{1/3}}\rb+o(1),\label{phase_function_local_A}
	\end{align}
	where the $o$-term is uniform for $\z,\w\in\tilde D_n^{1/3}$ and does not depend on $u,v$ and the $\O$-term does not depend on $\z,\w$.
	Furthermore, it is readily seen that
	\begin{align}
		\frac{t_{\rm cr}}{(2\pi i)^2\sqrt{t_n(\t_1)t_n(\t_2)}}=\frac{1}{(2\pi i)^2}+\O\lb \frac{ 1}{n^{1/3}}\rb.
	\end{align}
	Hence, defining
	\begin{align}
		&h_n(\z,\w,u,v,\t_1,\t_2):=\phi_{n,\t_2}(z,v)-\phi_{n,\t_1}(w,u)+f_n^*(\t_2,v)-f_n^*(\t_1,u)\\
		&-\lb
		\frac{\z^3}3-\t_2\z^2-v\z-\frac{\w^3}3+\t_1\w^2+u\w\rb=\O\lb\frac{u^2+v^2}{n^{1/3}}\rb+o(1)
	\end{align}
	with the $\O,o$-terms from above, we have after the change of variables in \eqref{def:Kn1_A}
	\begin{align}
		&{K}_{n,\t_1,\t_2}^{(1)}(u,v)\\
		&=\lb\frac{1}{(2\pi i)^2}+\O\lb \frac{ 1}{n^{1/3}}\rb\rb\int_{\tilde\Sigma^\Ai\cap \tilde D_n^{1/3}} d\z \int_{\lb\Gamma^\Ai\cap \tilde D_n^{1/3}\rb\cup \Gamma_{\rm right}} d\w ~ \frac{e^{\frac{\z^3}3-\t_2\z^2-v\z-\frac{\w^3}3+\t_1\w^2+u\w+h_n(\z,\w,u,v,\t_1,\t_2)}}{\z-\w}.\label{K_n_1_beginning_A}
	\end{align}
	In the last expression we denote
	\begin{align}
		\tilde\Sigma^\Ai:=(\infty e^{-\frac{7i\pi}{16}},0]\cup[0,\infty e^{\frac{7i\pi}{16}})\label{def:Sigma_tilde},
	\end{align} 
	the contour $\Gamma^\Ai$ has been defined following \eqref{extended_Airy}, and we set
	\begin{align}
		\Gamma_{\rm right}:=\left[\frac{n^{1/3}(w_{j,n}-x_n^*)}{c_2t_{\rm cr}},\frac{n^{1/3}(\Re w_{j,n}-x_n^*)}{c_2t_{\rm cr}}\right]\cup\left[\frac{n^{1/3}(\Re w_{j,n}-x_n^*)}{c_2t_{\rm cr}},\frac{n^{1/3}(\overline{w_{j,n}}-x_n^*)}{c_2t_{\rm cr}}\right],
	\end{align}
	with $j=3$ as we are in the slow Airy case (and it would be $j=1$ in the fast Airy case).
	We remark that the integral over the rescaled version of $[\Re w_{j,n},x_n^*]$ in \eqref{K_n_1_beginning_A} is zero as it appears twice with opposite orientations and thus cancels out.
	
	We show next that the contour $\Gamma_{\rm right}$ can be removed from \eqref{K_n_1_beginning_A} at the expense of a small error. To see this,  we observe that $\Gamma_{\rm right}$ lies  entirely in a sector where $\Re\w^3>0$ and is bounded well away from its boundaries. Moreover, we have $\textup{dist}(\Gamma_{\rm right},0)\to\infty$, and these two properties imply that we have for some $c>0$ and $n$ large enough the bound
	\begin{align}
		\Re\left(-\frac{\w^3}3+\t_1\w^2+u(\w+\s)\right)\leq-c\tilde r_n^3,
	\end{align}
	this estimate being uniform in $\w\in\Gamma_{\rm right}$ and uniform in $u\in [-M,M \tilde r_n]$. On the other hand, for $\z\in\tilde\Sigma^\Ai\cap \tilde D_n^{1/3}$ we have $\Re\z^3<0$ and $\Re(-\t_2\z^2-v(\z-\s))=\O(\tilde{r_n}^2)$, uniformly in $\z\in\tilde\Sigma^\Ai\cap \tilde D_n^{1/3}$ and \(v\in [-M,M \tilde r_n]\), and thus
	\begin{align}
		e^{\Re(\frac{\z^3}3-\t_2\z^2-v(\z-\s)-\frac{\w^3}3+\t_1\w^2+u(\w+\s))}=\O(e^{-c'\tilde r_n^3})\label{exponential_factor}
	\end{align}
	uniformly in $u,v\in [-M,M \tilde r_n]$ for some $c'>0$. As the lengths of the contours $\Gamma_{\rm right}$ and $\tilde\Sigma^\Ai\cap \tilde D_n^{1/3}$ are $\O(\tilde r_n)$ and the function $h_n$ is bounded, we get with $\lv \z-\w\rv\geq c''\tilde r_n$ for $\z\in\tilde\Sigma^\Ai\cap \tilde D_n^{1/3},\w\in\Gamma_{\rm right}$ and all $\s\geq0$
	\begin{align}
		&e^{\s(u+v)}\lvv \int_{\tilde\Sigma^\Ai\cap \tilde D_n^{1/3}} d\z \int_{ \Gamma_{\rm right}} d\w ~ \frac{e^{\frac{\z^3}3-\t_2\z^2-v\z-\frac{\w^3}3+\t_1\w^2+u\w+h_n(\z,\w,u,v,\t_1,\t_2)}}{\z-\w}\rvv\\
		&\leq \int_{\tilde\Sigma^\Ai\cap \tilde D_n^{1/3}} \lv d\z\rv \int_{\Gamma_{\rm right}} \lv d\w\rv ~ \frac{e^{\Re(\frac{\z^3}3-\t_2\z^2-v(\z-\s)-\frac{\w^3}3+\t_1\w^2+u(\w+\s)+h_n(\z,\w,u,v,\t_1,\t_2)}}{\lv\z-\w\rv}\\
		&=\O\lb \tilde r_ne^{-c'\tilde r_n^3}\rb=o(1),
	\end{align}
	uniformly for $u,v\in [-M,M \tilde r_n]$.

	To deal with the function $h_n$ from the remaining integral over $\lb\tilde\Sigma^\Ai\cap \tilde D_n^{1/3}\rb \times\lb\Gamma^\Ai\cap \tilde D_n^{1/3}\rb$, we use the inequality
	\begin{align}
		\lv e^z-1\rv\leq\lv z\rv e^{\lv z\rv}, \quad z\in\C.\label{exponential_trick}
	\end{align}
	Furthermore, to achieve a sufficient $L^1$-bound of the contour integral that yields the exponential decay for large positive values of $u,v$, we make one more modification of $\tilde\Sigma^\Ai$ and $\Gamma^\Ai$. To this end we define $\tilde\Sigma^\Ai_\s$ and $\Gamma^\Ai_\s$ by
	\begin{align}
		&\tilde\Sigma^\Ai_\s:=\tilde\Sigma^\Ai\cap\{\Re\z>\s\}\cup\left[\s-\frac{i\s\sin\lb\frac{7\pi}{16}\rb}{\cos\lb\frac{7\pi}{16}\rb},\s+\frac{i\s\sin\lb\frac{7\pi}{16}\rb}{\cos\lb\frac{7\pi}{16}\rb}\right],\label{def:Sigma_sigma}\\
		&\Gamma^\Ai_\s:=\Gamma^\Ai\cap\{\Re\w<-\s\}\cup\left[-\s-\frac{i\s\sin\lb\frac{7\pi}{16}\rb}{\cos\lb\frac{7\pi}{16}\rb},-\s+\frac{i\s\sin\lb\frac{7\pi}{16}\rb}{\cos\lb\frac{7\pi}{16}\rb}\right].
	\end{align}
	In words, we connect the two rays of $\tilde\Sigma^\Ai$ or $\Gamma^\Ai$ not at 0 but with a vertical segment with real parts $\s$ and $-\s$, respectively, thereby achieving
	\begin{align}
		\Re\z\geq\s,\quad \z\in\tilde\Sigma^\Ai_\s,\qquad\Re\w\leq-\s,\quad \w\in\Gamma^\Ai_\s,
	\end{align}
	where we keep the bottom-to-top orientation of the contours.
	Using analyticity we can replace $\tilde\Sigma^\Ai$ and $\Gamma^\Ai$ by their modifications.
	This gives uniformly with respect to $u,v\in [-M,M \tilde r_n]$ 
	\begin{align}
		e^{\s(u+v)}\Bigg\vert &\int_{\tilde\Sigma^\Ai\cap \tilde D_n^{1/3}} d\z \int_{ \Gamma^\Ai\cap \tilde D_n^{1/3}} d\w ~ \frac{e^{\frac{\z^3}3-\t_2\z^2-v\z-\frac{\w^3}3+\t_1\w^2+u\w+h_n(\z,\w,u,v,\t_1,\t_2)}}{\z-\w}\\
		&-\int_{\tilde\Sigma^\Ai\cap \tilde D_n^{1/3}} d\z \int_{\Gamma^\Ai\cap \tilde D_n^{1/3}} d\w ~ \frac{e^{\frac{\z^3}3-\t_2\z^2-v\z-\frac{\w^3}3+\t_1\w^2+u\w}}{\z-\w}\Bigg\vert\\
		\leq\int_{\tilde\Sigma_\s^\Ai\cap \tilde D_n^{1/3}}& \lv d\z\rv \int_{\Gamma_\s^\Ai\cap \tilde D_n^{1/3}} \lv d\w\rv ~ \lv h_n(\z,\w,u,v,\t_1,\t_2)\rv\frac{\lvv e^{\frac{\z^3}3-\t_2\z^2-v(\z-\s)-\frac{\w^3}3+\t_1\w^2+u(\w+\s)+\lv h_n(\z,\w,u,v,\t_1,\t_2) \rv}\rvv}{\lv\z-\w\rv}\\
		=o(1)&,
	\end{align}
	where we used $h_n=o(1)$, and the boundedness of the integral
	\begin{align}
		\int_{\tilde\Sigma_\s^\Ai\cap \tilde D_n^{1/3}} \lv d\z\rv \int_{\Gamma_\s^\Ai\cap \tilde D_n^{1/3}} \lv d\w\rv ~ \frac{\lvv e^{\frac{\z^3}3-\t_2\z^2-v(\z-\s)-\frac{\w^3}3+\t_1\w^2+u(\w+\s)}\rvv}{\lv\z-\w\rv}\label{bounded_Airy}
	\end{align}
	in $n$ as long as $u,v$ are bounded below. 
	Summarizing, we have
	\begin{align}
		&{K}_{n,\t_1,\t_2}^{(1)}(u,v)=\frac{1}{(2\pi i)^2}\int_{\tilde\Sigma^\Ai\cap \tilde D_n^{1/3}} d\z \int_{\Gamma^\Ai\cap \tilde D_n^{1/3}} d\w ~ \frac{e^{\frac{\z^3}3-\t_2\z^2-v\z-\frac{\w^3}3+\t_1\w^2+u\w}}{\z-\w}+o(e^{-\s(u+v)}).
	\end{align}
	In the last step we replace the $n$-dependent contours $\tilde\Sigma^\Ai\cap \tilde D_n^{1/3}$ and $\Gamma^\Ai\cap \tilde D_n^{1/3}$ by their limiting contours $\tilde\Sigma^\Ai$ and $\Gamma^\Ai$ as $n\to\infty$ at the expense of a small error. To see this, we estimate the modulus of the integral
	\begin{align}\int_{\tilde\Sigma^\Ai\setminus \tilde D_n^{1/3}} d\z \int_{\Gamma^\Ai} d\w ~ \frac{e^{\frac{\z^3}3-\t_2\z^2-v(\z-\s)-\frac{\w^3}3+\t_1\w^2+u(\w+\s)}}{\z-\w}
	\end{align}
	by modifying the contour \(\Gamma^\Ai\) to \(\Gamma_{\sigma}^\Ai\) first, and then we use the following estimates (for some \(c'>0\)):
	\begin{align}
		&\frac{1}{\vert \zeta - \omega\vert}  \leq c' , \ \text{ for } (\zeta, \omega) \in \left(\tilde\Sigma^\Ai\setminus \tilde D_n^{1/3} \right) \times \Gamma_{\sigma}^\Ai, \quad\text{and }\\
		&\Re\left(\frac{\zeta^3}{6} -\tau_2 \zeta^2 -v(\zeta-\sigma)\right) \leq -c' \tilde{r}_n^3, 
	\end{align}
	for \(\zeta \in \tilde\Sigma^\Ai\setminus \tilde D_n^{1/3}\), \(v\geq -M\) and bounded \(\tau_2\), and
	\begin{align}
		\Re\left(-\frac{\omega^3}{3} +\tau_1 \omega^2 + u(\omega + \sigma)\right)\leq \Re\left(-\frac{\omega^3}{3}\right)+ \tilde{M} \Re\left(\omega^2\right)-M\Re\left(\omega + \sigma\right),
	\end{align}
	for \(\omega \in \Gamma_{\sigma}^\Ai\), \(u \geq -M \) and \(\vert \tau_1 \vert \leq \tilde{M}\).
	Then we have for some constants \(c'', c''' >0\) 
	\begin{align}&\left\vert \int_{\tilde\Sigma^\Ai\setminus \tilde D_n^{1/3}} d\z \int_{\Gamma^\Ai} d\w ~ \frac{e^{\frac{\z^3}3-\t_2\z^2-v(\z-\s)-\frac{\w^3}3+\t_1\w^2+u(\w+\s)}}{\z-\w}\right\vert\\
		& \leq c'  \int_{\tilde\Sigma^\Ai\setminus \tilde D_n^{1/3}} \vert d\z \vert e^{\Re\left(\frac{\zeta^3}{6}\right)+ \Re\left(\frac{\zeta^3}{6} -\tau_2 \zeta^2 -v(\zeta-\sigma)\right)} \int_{\Gamma_{\sigma}^\Ai} \vert d\w\vert e^{\Re\left(-\frac{\omega^3}{3} +\tau_1 \omega^2 + u(\omega + \sigma)\right)}\\
		& \leq c''  e^{-c' \tilde{r}_n^3} \int_{\tilde\Sigma^\Ai\setminus \tilde D_n^{1/3}} \vert d\z \vert e^{\Re\left(\frac{\zeta^3}{6}\right)} = \O \left( e^{-c''' \tilde{r}_n^3}\right).
	\end{align}

	The remaining double integral over $\lb\tilde\Sigma^\Ai\cap \tilde D_n^{1/3}\rb \times\lb\Gamma^\Ai\setminus \tilde D_n^{1/3}\rb$ can be estimated similarly. Finally, by analyticity we may deform $\tilde\Sigma^\Ai$ into $\Sigma^\Ai$ as defined following \eqref{extended_Airy}.
	This finishes the proof of the proposition.
\end{proof}

\subsection{Analysis of remaining parts}
The aim of this section is to see that the remaining kernel parts $K_{n,\t_1,\t_2}^{(j)}$, $j=2,3$ from \eqref{def:Kn2_A} and \eqref{def:Kn3_A} are asymptotically negligible.
\begin{prop} \label{prop:Airyremaining}
	We have for every $\s\geq0$ 
	\begin{align}
		{K}_{n,\t_1,\t_2}^{(j)}(u,v)=o(e^{-\s(u+v)}),\quad j=2,3,
	\end{align}
	\(n\to\infty\), where the $o$-term is uniform for $u,v\in[-M,M\tilde r_n]$ for fixed $M>0$ and $\tilde r_n$ from \eqref{def:tilde_r_n}, and uniform for $\t_1,\t_2$ in compacts.
	
	If $n^{1/4}G_n^\2\gg n^\g$ for some $\g>0$, then the $o$-term can be replaced by $\O(e^{-n^d-\s(u+v)})$, for some $d>0$, and the convergence is uniform for $u,v\in[-M,Mn^{\e_0}]$ for fixed $M>0$ and some $\e_0>0$.
\end{prop}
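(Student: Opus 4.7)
The plan is to exploit the steepest descent/ascent geometry built into the construction of $\Sigma$ and $\Gamma$: by \cite[Lemma 3.3 (1)]{CNV2}, the graph of $y_{t,\mu_n}$ is an ascent path for $\Re\phi_{n,\tau_1}(\cdot,u)$ away from its saddle, so that walking along $\Gamma$ out of the disk $D_n^{1/3}$ only increases $\Re\phi_{n,\tau_1}$, whereas $\Sigma$ is designed so that walking along its vertical pieces only decreases $\Re\phi_{n,\tau_2}(\cdot,v)$. The goal is to make this quantitative: on each piece of $\Gamma_{\rm out}$, $\Re\phi_{n,\tau_1}(w,u)$ exceeds its saddle value by at least $c\tilde r_n^3$ for some $c>0$ (and analogously for $\Sigma_{\rm out}$). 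Plugged into the double integral this buys a factor $e^{-c\tilde r_n^3}$ against the polynomial-in-$n$ contour length, giving the $o(1)$ bound; in the fast Airy case $\tilde r_n = n^{\gamma/6}$ so the gain is $e^{-n^d}$ with $d = \gamma/2$.

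For $K^{(2)}_{n,\tau_1,\tau_2}(u,v)$ I would split $\Gamma_{\rm out}$ into its global piece on the graph of $y_{t_n(\tau_1),\mu_n}$ and its local piece consisting of the horizontal and vertical segments of Figures \ref{fig:contourGamma_Airy} and \ref{fig:contour_Gamma_slow} lying outside $D_n^{1/3}$. On the global piece the ascent property yields monotonicity, and a direct computation on $\partial D_n^{1/3}$ in the rescaled coordinate $\omega = n^{1/3}(w-x_n^*)/(c_2 t_{\rm cr})$ shows that the gain already attained at the exit point is $\Re(\omega^3/3) \asymp \tilde r_n^3$. On the local piece, the construction guarantees that $w-x_n^*$ lies in a sector where $\Re(w-x_n^*)^3>0$ is bounded away from zero, so by the quartic approximation \eqref{expansion_phi} the function $\Re(-\phi_{n,\tau_1}(w,u))$ is monotone along the segment and again attains the $\tilde r_n^3$-bound at the outer endpoint. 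The $z$-integral over the full $\Sigma$ is controlled by an estimate of the same type as \eqref{bounded_Airy} after the contour shift by $\sigma$ used in the proof of Proposition \ref{prop:local_Airy}, which absorbs $e^{\sigma v}$; an analogous shift of $\Gamma$ absorbs $e^{\sigma u}$ in the admissible range $u,v\in[-M,M\tilde r_n]$.

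The argument for $K^{(3)}_{n,\tau_1,\tau_2}(u,v)$ is mirror-symmetric. On the vertical portions of $\Sigma_{\rm out}$ we have $\Re(z-x_n^*)^2 \leq -c r_n^2$ bounded away from zero, so the quadratic term in \eqref{expansion_phi} dominates and $\Re\phi_{n,\tau_2}(z,v)$ drops by at least $c'nr_n^2$; since $nr_n^2 \to \infty$ this is enough to absorb any polynomial factor coming from the contour length, while the short slanted links to $x_n^*$ through $\hat z_n$ in \eqref{def:Sigma} contribute a bounded error, and the $w$-integral over $\Gamma_{\rm in}$ is bounded uniformly as in the proof of Proposition \ref{prop:local_Airy}. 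The main obstacle is to render the ascent bound on the \emph{long global piece} of $\Gamma$ quantitative and $n$-uniform, particularly in the slow Airy regime where the saddle lies only near (not at) $x_n^*$ and the graph of $y_{t_n(\tau_1),\mu_n}$ can extend a macroscopic distance; for this I would combine the monotonicity of $\Re\phi_{n,\tau_1}$ along the graph with a lower bound on $\psi_{n,t_n(\tau_1)}$ coming from Assumption 1 and Biane's homeomorphism \eqref{homeo}, which guarantees that the $\tilde r_n^3$ gain attained at the exit of $D_n^{1/3}$ is never subsequently lost.
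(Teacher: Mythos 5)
Your global strategy (descent/ascent contours, monotonicity of $\Re\phi_{n,\t_1}$ along the graph of $y_{t_n(\t_1),\mu_n}$ via Biane's homeomorphism, and a quantitative gain at the exit of $D_n^{1/3}$) is the right skeleton and matches the paper's, but several of your quantitative claims fail, and they fail precisely in the slow Airy regime that makes this proposition delicate. First, the advertised gain $e^{-c\tilde r_n^3}$ does not beat a polynomial contour length when $1\ll n^{1/4}G_n^\2\ll n^{\g}$ for all $\g>0$: there $\tilde r_n$ may grow slower than any power of $\log n$, so $e^{-c\tilde r_n^3}\cdot n^{C}\to\infty$. The paper therefore works on two scales: on $\Gamma_{\rm out,1}=\Gamma_{\rm out}\cap D_n^{1/4}$ the smallness is obtained not from an exponential gain but from a dominated-convergence type argument ($J_n\to J$ for a convergent majorant integral, so the tail $J-J_n\to 0$), and only outside $D_n^{1/4}$ does one get a genuine $e^{-Cn^{4\e}}$ from the quartic term, which is what actually absorbs the arc length of the graph of $y_{t_n(\t_1),\mu_n}$ — a length that is not obviously polynomial and requires a separate nontrivial Claim ($\O(n^4)$, via the count of monotonicity intervals from \cite{CNV1}) which you assume without justification.

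Second, your assertion that on the local pieces of $\Gamma_{\rm out}$ one has $\Re(w-x_n^*)^3>0$ bounded away from zero is false on the segment $[w_{1,n},w_{2,n}]$: $w_{1,n}$ sits at argument $\approx\pi/3$ and $w_{2,n}$ at argument $\pi/7$, so $\Re\w^3$ changes sign along it; there one must instead use that $\Re\w^4<0$ and that the quartic term dominates at distance $\asymp n^{-1/4+\e}$ from $x_n^*$. Third, for $K^{(3)}$ the claimed drop $c'nr_n^2$ from ``the quadratic term'' is not available: the heat-kernel quadratic $\frac{n}{2t_n(\t_2)}(z-\cdots)^2$ is cancelled to leading order by $\frac{nG_n^\1}{2}(z-x_n^*)^2$ coming from the log-transform (recall $G_n^\1=-1/t_{\rm cr}$), leaving a net quadratic coefficient of order $n^{2/3}(G_n^\2)^{2/3}$ only; on $(\Sigma\setminus D_n^{1/3})\cap D_n^{1/4}$ the decay comes from the cubic term ($\Re\z^3<0$ at argument $7\pi/16$) and is again only $e^{-c\tilde r_n^3}$, so the same two-scale problem recurs. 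Genuine quadratic domination holds only on $\Sigma\cap\{\lv\Im z\rv\geq n\}$, and even there one must control $g_{\mu_n}(z)-g_{\mu_n}(x_n^*)$ via Assumption 2, since $\mu_n$ may have mass escaping to infinity — a point your sketch does not address.
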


\begin{proof}
	We will give a detailed proof for $j=2$ in the more complicated slow Airy case, in which we have $1\ll n^{1/4}G_n^\2\ll n^\g$ for all $\g>0$, and we will afterwards indicate which modifications are needed for the fast Airy case. We also recall that we assume $\t_2\geq\t_1$ and $0<\e<\frac{1}{20}$.
	
	We first split the contours of integration in \eqref{def:Kn2_A} further into \(n\)-dependent sub-contours
	\begin{align}
		&\Sigma=(\Sigma\cap D_n^{1/4})\cup(\left(\Sigma\backslash D_n^{1/4} \right)\cap\{\lv\Im z\rv< n\})\cup(\Sigma\cap\{n\leq\lv\Im z\rv\}),\\
		&\Gamma_{\rm out}=\Gamma_{\rm out,1}\cup\Gamma_{\rm out,2},\quad
		\Gamma_{\rm out,1}:=\Gamma_{\rm out}\cap D_n^{1/4},\quad\Gamma_{\rm out,2}:=\Gamma_{\rm out}\setminus D_n^{1/4}.\label{def:Gamma_out_1}
	\end{align} 
	We will now show that the corresponding double integrals are negligible. \\
	
	\noindent\textbf{Negligibility of the integral over $\left(\Sigma\cap D_n^{1/4} \right) \times \Gamma_{\rm out,1}$:}\\
	
	The contour $\Gamma_{\rm out,1}$ consists of the three segments $[w_{1,n},w_{2,n}]$, $[w_{2,n},w_{3,n}]$, $[w_{4,n},w_{5,n}]$ and their complex conjugates, where these points have been defined following \eqref{def:slowAirypoints}. As the conjugated parts can be treated similarly, we will focus on the integrals over $\left(\Sigma\cap D_n^{1/4} \right)\times S$, where $S=[w_{1,n},w_{2,n}],[w_{2,n},w_{3,n}],[w_{4,n},w_{5,n}]$. We start with $S=[w_{2,n},w_{3,n}]$.

	Under the change of variables \eqref{change_of_variables_Airy}, the disk $D_n^{1/4}$ is transformed into
	\begin{align*}
		\tilde D_n^{1/4}:=\{\xi\in\C:\lv \xi\rv\leq  \tilde d_n\},
	\end{align*}
	where 
	\begin{align}
		\tilde d_n:=n^{-1/4+\e}(nG_n^\2)^{1/3}2^{-1/3}.
	\end{align}
	Moreover, $\Sigma\cap D_n^{1/4} $ and $[w_{2,n},w_{3,n}]$ are transformed into 
	\begin{align}
		&\tilde\Sigma^\Ai\cap\{\lv\z\rv\leq \tilde d_n\}=[\tilde d_ne^{-\frac{7i\pi}{16}},0]\cup[0,\tilde d_ne^{\frac{7i\pi}{16}}],\\
		&\tilde\Gamma:=[d_n'e^{i\pi/7},2^{-1/3}\tilde r_ne^{i\pi/7}],
	\end{align}
	with $\tilde \Sigma^\Ai$ from \eqref{def:Sigma_tilde} and we know about $d_n'$ (based on Proposition \ref{proposition:density}) that $d_n'/\tilde d_n$ is bounded away from 0 and infinity.
	
	Similarly to \eqref{phase_function_local_A}, a computation using expansion \eqref{expansion_phi} yields for $z,w\in D_n^{1/4}$
	\begin{align}
		&\phi_{n,\t_2}(z,v)-\phi_{n,\t_1}(w,u)+f_n^*(\t_2,v)-f_n^*(\t_1,u)\\
		&=\frac{2^{4/3}G_n^\3\z^4}{24n^{1/3}(G_n^\2)^{4/3}}+\frac{\z^3}3-\t_2\z^2-v\z-\frac{2^{4/3}G_n^\3\w^4}{24n^{1/3}(G_n^\2)^{4/3}}-\frac{\w^3}3+\t_1\w^2+u\w\\
		&+\O\lb\frac{u^2+v^2}{n^{1/3}}\rb+o(1),\label{phase_function_local_A2}
	\end{align}
	with the $o$-term being uniform in $\z,\w\in\tilde D_n^{1/4}$ and independent of $u,v$. Here we make use of the assumption \(\epsilon  < \frac{1}{20}\) in order to control the error terms, in particular it ensures that the remainder term \(n\left(\frac{c_2 t_{\rm cr}}{n^{1/3}} \zeta\right)^5\) is of order \(o(1)\), \(n\to\infty\). 
	
	For $z\in \Sigma\cap D_n^{1/4}$ we have $\Re \z^4>0$ and $\Re\z^3<0$ and for $w\in[w_{2,n},w_{3,n}]$ we have $\Re\w^4<0$ and $\Re \w^3>0$. Moreover, for $w\in[w_{2,n},w_{3,n}]$ we have $\Re\w\to +\infty$ as $n\to\infty$. Therefore, there is some small $c>0$ such that for any $u\in[-M,M\tilde r_n]$ we have 
	\begin{align}
		\Re\lb-\frac{\w^3}3+\t_1\w^2+u(\w+\s)\rb\leq-c\Re\w^3, \quad \w \in \tilde{\Gamma}.
	\end{align}
	This enables us (recalling that \(G_n^{(3)} <0\)) to deduce that there is some $C>0$   such that for any $u,v\in [-M,M\tilde r_n]$
	\begin{align}
		&\Re\lb\phi_{n,\t_2}(z,v)-\phi_{n,\t_1}(w,u)+f_n^*(\t_2,v)-f_n^*(\t_1,u)+\s(u+v)\rb\\
		&\leq \Re\lb C+\frac{\z^3}3-\t_2\z^2-v(\z-\s)-c\w^3\label{cut_out_u}\rb.
	\end{align}
	Now, after adjusting the constant \(C\), the estimate \eqref{cut_out_u}, valid for \(\z \in \tilde\Sigma^\Ai\cap\{\lv\z\rv\leq \tilde d_n\}  \) and \(\w \in \tilde{\Gamma}\),  remains valid if we replace $\tilde\Sigma^\Ai\cap\{\lv \z\rv\leq\tilde d_n\}$ by its $\s$-modification $\tilde \Sigma^\Ai_\s\cap\{\lv \z\rv\leq\tilde d_n\}$ from \eqref{def:Sigma_sigma}. This follows from the observation that the differing parts are bounded and the coefficient of \(\z^4\) in expansion \eqref{phase_function_local_A2} converges to zero, as \(n\to\infty\).
	We infer from this 
	\begin{align}
		&\lvv e^{\s(u+v)}\frac{n^{1/3}}{c_2}\int_{\Sigma\cap D_n^{1/4}} dz \int_{[w_{2,n},w_{3,n}]} dw ~ \frac{e^{\phi_{n,\t_2}(z,v)-\phi_{n,\t_1}(w,u)+f_n^*(\t_2,v)-f_n^*(\t_1,u)}}{z-w}\rvv\label{bounded_Airy2}\\
		&\leq e^C\int_{\tilde\Sigma^\Ai_\s\cap\{\lv \z\rv\leq\tilde d_n\}} \lv d\z\rv \int_{\tilde\Gamma} \lv d\w\rv ~ \frac{\lvv e^{\frac{\z^3}3-\t_2\z^2-v(\z-\s)-c\w^3}\rvv}{\lv\z-\w\rv},
	\end{align}
	where we changed  the contour $\tilde\Sigma^\Ai\cap\{\lv \z\rv\leq\tilde d_n\}$ by analyticity to its $\s$-modification before taking the absolute values inside the integrals.
	We can now argue that 
	\begin{align}
		J:=\int_{\tilde\Sigma^\Ai_\s} \lv d\z\rv \int_{\tilde\Gamma_\infty} \lv d\w\rv ~ \frac{\lvv e^{\frac{\z^3}3-\t_2\z^2-v(\z-\s)-c\w^3}\rvv}{\lv\z-\w\rv}\label{def:I}
	\end{align}
	is finite and bounded for $v\geq -M$, where $\tilde{\Gamma}_\infty:=[0,\infty e^{\frac{i\pi}{7}})$ and we use that on $\tilde\Sigma_\s$ we have $\Re(\z-\s)\geq0$. Moreover, for
	\begin{align}
		J_n:=\int_{\tilde\Sigma^\Ai_\s\cap\{\lv \z\rv\leq\tilde d_n\}} \lv d\z\rv \int_{\tilde\Gamma_\infty\cap \tilde D_n^{1/3}} \lv d\w\rv ~ \frac{\lvv e^{\frac{\z^3}3-\t_2\z^2-v(\z-\s)-c\w^3}\rvv}{\lv\z-\w\rv}
	\end{align}
	we have $\lim_{n\to\infty} J_n=J$, 
	from which we conclude that
	\begin{align}
		\int_{\tilde\Sigma^\Ai_\s\cap\{\lv \z\rv\leq\tilde d_n\}} \lv d\z\rv \int_{\tilde\Gamma} \lv d\w\rv ~ \frac{\lvv e^{\frac{\z^3}3-\t_2\z^2-v(\z-\s)-c\w^3}\rvv}{\lv\z-\w\rv}\leq J-J_n\to0,
	\end{align}
	holding uniformly with respect to \(v\). This in turn implies that \eqref{bounded_Airy2} converges to 0 for $n\to\infty$.
	
	By similar reasoning, we can show that the double integral over $\left(\Sigma\cap D_n^{1/4} \right)\times [w_{4,n},w_{5,n}]$ is negligible: Since for $w\in[w_{4,n},w_{5,n}]$ we also have $\Re\w^4<0$ and $\Re \w^3>0$, we can include the transformed variant of $[w_{4,n},w_{5,n}]$ (under \eqref{change_of_variables_Airy}) as a vanishing tail piece of a contour $\tilde \Gamma_\infty$ such that the integral $I$ from \eqref{def:I} (with this new $\tilde \Gamma_\infty$) is finite. Then the same argument as above yields the desired estimate. 
	
	For the segment $[w_{1,n},w_{2,n}]$, a different argument is needed as $\Re\w^3$ changes signs on it. Briefly speaking, it will rely on the fact that $[w_{1,n},w_{2,n}]$ lies in a region where the quartic term dominates the $\w$-terms in the expansion \eqref{phase_function_local_A2} and we have $\Re\w^4<0$. To make this precise, we note first that for some $c>0$ we have $\textup{dist}([w_{1,n},w_{2,n}],x_n^*)\geq cn^{-1/4+\e}$. Thus we have for $w\in[w_{1,n},w_{2,n}]$ that 
	\begin{align}
		&\Re(\w^4)\leq -c'(G_n^\2)^{4/3}n^{1/3+4\e},\quad \lv\Re(\w^3)\rv\leq c'G_n^\2n^{1/4+3\e},\\
		&\lv\Re(\w^2)\rv\leq c' \left(n^{1/4} G_n^{(2)}\right)^{2/3} n^{2\e},\quad \Re\left(u(\w +\sigma)\right) \leq c' n^{\e}\left(n^{1/4} G_n^{(2)}\right)^{2/3} , 
	\end{align}
	for some $c'>0$, \(u\in [-M,M \tilde{r}_n]\) and $n$ large enough. As we are in the slow Airy case, in which \(n^{1/4} G_n^{(2)}\) is bounded by \(n^{\gamma}\) for every \(\gamma>0\), we thus obtain for some \(c''>0\)
	\begin{align}
		\Re\lb-\frac{2^{4/3}G_n^\3\w^4}{24n^{1/3}(G_n^\2)^{4/3}}-\frac{\w^3}3+\t_1\w^2+u(\w+\s)\rb\leq -c''n^{4\e}.
	\end{align}
	Moreover, for \(z\in \Sigma \cap D_n^{1/4}\) we have
	\begin{align}
		\Re\lb\frac{2^{4/3}G_n^\3\z^4}{24n^{1/3}(G_n^\2)^{4/3}}+\frac{\z^3}3-\t_2\z^2-v(\z+\s)\rb\ll n^{4\e},
	\end{align}
	\(n\to\infty\), uniformly in \(v\in [-M,M \tilde{r}_n]\) and for bounded \(\tau_2\), which yields
	\begin{align}
		e^{\Re\lb\phi_{n,\t_2}(z,v)-\phi_{n,\t_1}(w,u)+f_n^*(\t_2,v)-f_n^*(\t_1,u)+\s(u+v)\rb}\leq e^{-Cn^{4\e}}\label{exponential_decay_Gamma}
	\end{align}
	for some $C>0$, all $(z,w)\in\left(\Sigma\cap D_n^{1/4} \right)\times [w_{1,n},w_{2,n}]$ and all $u,v\in[-M,M\tilde r_n]$. This exponential decay is clearly enough to show (using $\lv z-w\rv\geq C'n^{-1/4+\e}$ for some $C'>0$ and the lengths of the contours being $\O(n^{-1/4+\e})$) that the double integral over $\left(\Sigma\cap D_n^{1/4} \right)\times [w_{1,n},w_{2,n}]$ is $\O(e^{-C''n^{4\e}})$ for some $C''>0$.\\

	\noindent\textbf{Negligibility of the integral over $\left(\left(\Sigma\backslash D_n^{1/4} \right)\cap\{\lv\Im z\rv< n\}\right)\times \Gamma_{\rm out,1}$:}\\
	
	For $z\in\Sigma \cap \partial D_n^{1/4}, w\in\Gamma_{\rm out,1}$ we have by \eqref{phase_function_local_A2} 
	\begin{align}
		\Re\lb\phi_{n,\t_2}(z,v)-\phi_{n,\t_1}(w,u)+f_n^*(\t_2,v)-f_n^*(\t_1,u)+\s(u+v)\rb\leq -cn^{4\e}\label{decay_Sigma}
	\end{align}
	for some $c>0$. We will now show that \eqref{decay_Sigma} can be extended to all $z\in\left(\Sigma\backslash D_n^{1/4} \right)\cap\{\lv\Im z\rv< n\}$ since $\phi_{n,\t_2}(z,v)$ decays as $z$ moves away from $D_n^{1/4}$ along $\Sigma$. To see this, let $\a,\b\in\R$ and compute
	\begin{align}
		\frac{\partial}{\partial\b}\Re\phi_{n,\t_2}(\a+i\b,v)=n\b\lb\int\frac{d\mu_n(s)}{(\a-s)^2+\b^2}- \frac1{t_n(\t_2)}\rb.\label{decay_along_Sigma}
	\end{align} 
	For any $\a\in\R$ the term in the parenthesis is positive if $-y_{t_n(\t_2),\mu_n}(\a)<\b<y_{t_n(\t_2),\mu_n}(\a)$,  0 if $\lv\b\rv=y_{t_n(\t_2),\mu_n}(\a)$ and negative if $\lv\b\rv>y_{t_n(\t_2),\mu_n}(\a)$. By Proposition \ref{proposition:density} we conclude that \eqref{decay_Sigma} holds for $z\in\left(\Sigma\backslash D_n^{1/4} \right)\cap\{\lv\Im z\rv< n\}$. We thus have sufficiently fast exponential decay of the integrand on the contours, and together with the polynomial lengths of the contours and $\lv z-w\rv\geq c'n^{-1/4+\e}$ for some $c'>0$, this yields with the standard estimate the negligibility of the integral over $\left(\left(\Sigma\backslash D_n^{1/4} \right)\cap\{\lv\Im z\rv< n\}\right)\times \Gamma_{\rm out,1}$.\\
	
	\noindent\textbf{Negligibility of the integral over $\lb\Sigma\cap\{\lv\Im z\rv\geq n\}\rb\times \Gamma_{\rm out,1}$:}\\
	
	In this case the $z$-contour is unbounded and we will use sub-Gaussian behavior of $\Re\phi_{n,\tau_2}$ on \(\Sigma\) to obtain the decay. Indeed, from the definition of \eqref{def:phi_A} it is readily seen that the quadratic term eventually dominates the logarithmic one. To quantify this, we first consider the logarithmic term
	\begin{align}
		g_{\mu_n}(z)-g_{\mu_n}(x_n^*)=\int_{x_n^*}^zG_{\mu_n}(\xi)d\xi.
	\end{align}
	We can interpret $g_{\mu_n}(x_n^*)$ as a ``normalization'' to compensate for the potentially large parts of $g_{\mu_n}(z)$ that occur if some initial points $X_j(0)$ go to $\pm\infty$ sufficiently fast. For \(z \in \Sigma\) we have
	
	\begin{align}
		\lv g_{\mu_n}(z)-g_{\mu_n}(x_n^*)\rv \leq \int_{[x_n^*,z]}\lv d\xi\rv\int\frac{d\mu_n(s)}{\lv\xi-s\rv}.
	\end{align}
	Now, splitting up the inner integral into two parts gives
	\begin{align}
		\int\frac{d\mu_n(s)}{\lv\xi-s\rv}= \int_{\vert x_n^*-s\vert <1}\frac{d\mu_n(s)}{\lv\xi-s\rv}+\int_{\vert x_n^*-s\vert \geq 1}\frac{d\mu_n(s)}{\lv\xi-s\rv},
	\end{align}
	and as the contour \(\Sigma\) is close to \(x_n^*+i\R\) for $n$ large, we have
	\begin{align}
		\int_{\vert x_n^*-s\vert <1}\frac{d\mu_n(s)}{\lv\xi-s\rv}\leq 2 \int_{\vert x_n^*-s\vert <1}\frac{d\mu_n(s)}{\lv x_n^*-s\rv}\leq 2 \int\frac{d\mu_n(s)}{\lv x_n^*-s\rv^5}\leq 2 C
	\end{align}
	for some $C>0$ by Assumption 2. This gives for \(z\in \Sigma\)
	\[\lv g_{\mu_n}(z)-g_{\mu_n}(x_n^*)\rv \leq C'\lv z-x_n^*\rv,\]
	for some $C'>0$, independent of  large \(n\). For the quadratic part of $\phi_{n,\t_2}$ we have for $n$ large enough
	\begin{align}
		\Re\left(z-x_n^* \left(t_n (\tau_2)\right)-\frac{v}{c_2 n^{2/3}}\right)^2\leq -c(\Im z)^2,\quad z\in \Sigma\cap\{\lv\Im z\rv\geq n\},
	\end{align}
	where $c>0$, uniform in bounded \(\tau_2\) and \(v \in [-M,M\tilde{r_n}]\). Combining the last two estimates gives
	\begin{align}
		\Re\lb \phi_{n,\t_2}(z,v)-ng_{\mu_n}(x_n^*)\rb\leq -c'n(\Im z)^2\label{sub-Gaussian}
	\end{align}
	for some $c'>0$. Integrating this along $\Sigma\cap\{\lv\Im z\rv\geq n\}$ gives an integral of order $e^{-Cn^3}$ for some $C>0$. Taking into account that on \(\Gamma_{\rm out,1}\) we have 
	\[ \phi_{n,\t_1}(w,u)-ng_{\mu_n}(x_n^*) =\O \left(n\right), \]
	as well as 
	\[f_n^{*}(\tau_2,v)-f_n^{*}(\tau_1,u) = \O \left(n^{2/3}\right),\]
	uniformly with respect to bounded \(\t_1, \t_2\) and \(u,v \in [-M,M\tilde{r_n}]\), the negligibility of the double integral over $\lb\Sigma\cap\{\lv\Im z\rv\geq n\}\rb\times \Gamma_{\rm out,1}$ now follows immediately by taking absolute values inside the integral and a decoupling of the integrals.\\

	\noindent\textbf{Negligibility of the integral over $\Sigma\times \Gamma_{\rm out,2}$:}\\
	
	We first recall from \eqref{def:Gamma_out_1} that the contour $\Gamma_{\rm out,2}$ consists of those parts of $\Gamma$ that agree with the graph of $y_{t_n(\t_1),\mu_n}$ outside of the disk \(D_n^{1/4}\), and also that we have the exponential decay from \eqref{exponential_decay_Gamma}, 
	\begin{align}
		e^{\Re\lb\phi_{n,\t_2}(z,v)-\phi_{n,\t_1}(w,u)+f_n^*(\t_2,v)-f_n^*(\t_1,u)+\s(u+v)\rb}\leq e^{-Cn^{4\e}}\label{exponential_decay_Gamma2}
	\end{align}
	for some $C>0$, proven there for $(z,w)\in\left(\Sigma\cap D_n^{1/4}\right)\times [w_{1,n},w_{2,n}]$.  By the further decay along $\Sigma$, see the discussion following \eqref{decay_along_Sigma}, we can extend \eqref{exponential_decay_Gamma2} to $z\in\Sigma$. We will now see that it can also be extended to $w\in\Gamma_{\rm out,2}$. To this end, note that it holds at the one endpoint $w_{1,n}$ of $\Gamma_{\rm out,2}$ and the same arguments as used for \eqref{exponential_decay_Gamma} also show that it holds for the other endpoint $w=w_{5,n}$ as well.  Now, we compute for $\a,\b\in\R$
	\begin{align} 
		&\frac{\partial}{\partial\a}\Re\phi_{n,\t_1}(\a+iy_{t_n(\t_1),\mu_n}(\a),u)\label{derivative_alpha}=\frac n{t_n(\t_1)}\lb H(\a)-x_n^*(t_n(\t_1))-\frac{u}{c_2n^{2/3}}\rb,
		\\
		&H(\a):=\a+iy_{t_n(\t_1),\mu_n}(\a)+t_n(\t_1)G_{\mu_n}(\a+iy_{t_n(\t_1),\mu_n}(\a)).
	\end{align}
	Biane has shown in \cite[Corollary 3]{Biane} that $H$ is a homeomorphism on $\R$, in particular $H(\a)\in\R$, and it is not hard to see that $\lim_{\a\to\pm\infty}H(\a)=\pm\infty$. Defining $\bar\a$ as the unique (saddle) point for which  $H(\a)=x_n^*(t_n(\t_1))+\frac{u}{c_2n^{2/3}}$ holds, this implies that the derivative in  \eqref{derivative_alpha} is negative for $\a<\bar\a$, 0 for $\a=\bar\a$ and positive for $\a>\bar\a$. Recall that by Proposition \ref{proposition:density} the part of the graph of $y_{t_n(\t_1),\mu_n}$ between $w_{1,n}$ and $w_{5,n}$ lies entirely in $D_n^{1/4}$. Going back to \eqref{expansion_phi}, we see that the exponential decay gets slower as $w$ moves from $w_{1,n}$ or $w_{5,n}$ closer to $x_n^*$, meaning that the point $\bar\a+iy_{t_n(\t_1),\mu_n}(\bar\a)$ lies in the interior of $D_n^{1/4}$. This implies that $-\Re\phi_{n,\t_1}(w,u)$ decays even further as $w$ moves from $w_{1,n}$ to the right or from $w_{5,n}$ to the left and hence inequality \eqref{exponential_decay_Gamma2} extends to $\Gamma_{\rm out,2}$.

	The second ingredient we need is a bound on the relevant length of the contour $\Gamma_{\rm out,2}$. To this end, we make the following
	\subsection*{Claim:} \label{claim}The arc length of the graph of $x\mapsto y_{t_n(\t_1),\mu_n}(x)$, restricted to the support of $y_{t_n(\t_1),\mu_n}$, is $\O(n^4)$, uniformly for $\t_1$ in compacts.\\
	
	We note that the restriction to the intervals where $y_{t_n(\t_1),\mu_n}(x)>0$ is crucial. If some $X_j(0)$ tends to $\pm\infty$ as $n\to\infty$, then the length of $\Gamma_{\rm out,2}$ goes to infinity with $n$, the speed of this divergence depending on the speed of the divergence of the $X_j(0)$. However, as $\Gamma_{\rm out,2}\cap\R$ is passed through in both directions, these integrals cancel and do not need to be considered. 
	
	To prove the claim, we note first that by definition of $y_{t_n(\t_1),\mu_n}$ in \eqref{def:y}, we have $y_{t_n(\t_1),\mu_n}(x)=0$ if $\textup{dist}(x,\supp\mu_n)\geq\sqrt{t_n(\t_1)}$. Thus the support of $y_{t_n(\t_1),\mu_n}$ consists of at most $n$ intervals with total length $2n\sqrt{t_n(\t_1)}$. Each such interval can now be split into intervals of monotonicity, i.e.~intervals on which $y_{t_n(\t_1),\mu_n}$ increases or decreases. It has been shown in the proof of \cite[Lemma 2.1]{CNV1} that there are at most $4n^2(2n-1)+1$ such intervals of monotonicity. On each interval of monotonicity $I$ we can bound the arc length of the graph of $y_{t_n(\t_1),\mu_n}$ by $L(I)+\sqrt{t_n(\t_1)}$, where $L(I)$ denotes the length of $I$ and we used the fact that by definition $y_{t_n(\t_1),\mu_n}(x)\leq\sqrt{t_n(\t_1)}$. With the crude bound $L(I)\leq 2n\sqrt{t_n(\t_1)}$ the claim follows.
	
	With this have now all ingredients to complete the proof of the negligibility of the integral over $\Sigma\times \Gamma_{\rm out,2}$, which is the last step for the proof of Proposition \ref{prop:Airyremaining} for $j=2$ in the slow Airy case: on $\lb\Sigma\cap\{\lv\Im z\rv<n\}\rb\times \Gamma_{\rm out,2}$, we have (relevant) contour lengths that are polynomial in $n$. Hence, by decoupling (via bounding $\lv z-w\rv$ below) and using \eqref{exponential_decay_Gamma2} this proves that the double integral is $\O(e^{-cn^{4\e}})$ for some $c>0$. On $\lb\Sigma\cap\{\lv\Im z\rv\geq n\}\rb\times \Gamma_{\rm out,2}$, however, we can use the monotonicity argument following \eqref{derivative_alpha} to extend the analysis following \eqref{sub-Gaussian}, and the sub-Gaussian decay can be employed again. This finishes the proof for $j=2$ in the slow Airy case. 
	
	Let us comment on the remaining cases. The case $j=3$ can be proved using analogous arguments to the case $j=2$. The difference between the analysis performed in detail here and the fast Airy case is that in the contour $\Gamma$, the local modification around $x_n^*$ essentially connects with the graph of $y_{t_n(\t_1),\mu_n}$ already at $\partial D_n^{1/3}$, not at $\partial D_n^{1/4}$. This makes the proofs conceptionally easier, as the contour part $\Gamma_{\rm out,1}$ is not present, and secondly, there is faster decay of the phase function already at $\partial D_n^{1/3}$, meaning that the same arguments as used here for discussing $\Gamma_{\rm out,2}$ can then be employed, leading to the specified error terms.
\end{proof}

\section{Proof of Theorem \ref{thrm_main} for the transition and Pearcey cases}\label{Sec:Pearcey}
In this section we turn to the proofs of parts two and three of Theorem \ref{thrm_main}. In order to avoid repetition, we will  essentially prove them together, while also relying on the knowledge gained in the proofs in Section \ref{Sec:proofAiry}. To this end, we will investigate the differently rescaled correlation kernel
\begin{align}\label{rescaledtrans}
	\frac{1}{c_3 n^{3/4}}\tilde K_{n,t^{\rm M}_{n}(\tau_1),t^{\rm M}_{n}(\tau_2)} \left(x_n^{*}(t_n^{\rm M} (\t_1))+\frac{u}{c_3 n^{3/4}}, x^{*}_n(t_n^{\rm M} (\t_2))+\frac{v}{c_3 n^{3/4}}\right),
\end{align}
where we recall that \(\tilde K_{n,s,t}\) is the gauged kernel 
\begin{align*}
	\tilde K_{n,s,t}(x,y)=K_{n,s,t}(x,y)\exp({f_n(t,y)-f_n(s,x)}),
\end{align*}
the gauge factors \(f_n\) are defined in \eqref{gauge_factor} and the kernel \(K_{n,s,t}\) is given in \eqref{def:Kn}. The constants are
\begin{equation*} c_3= \frac{1}{t_{\rm cr}} \left(-\frac{G_n^{(3)}}{6}\right)^{-1/4},\qquad 
	t_{n}(\t)=t_{n}^{\rm M}(\t,x_n^*)=t_{\rm cr}+\frac{\tau}{c_3^2  n^{1/2}},
\end{equation*}
where we drop the superscript \(\rm M\) in the time parameter throughout this section (as we only consider this parameterization here). Moreover, we assume that \(I_n = n^{1/4}\frac{G_n^{(2)}}{2} \) is a bounded sequence.

As in the treatment of the Airy case we observe that it is sufficient to focus on the case \(\t_1 \leq \t_2\), as the convergence of the rescaled heat kernel part in \eqref{def:Kn} follows from a simple computation. Thus, we have to deal with the double complex contour integral appearing in  \eqref{rescaledtrans}, which we write as
\begin{align}\label{eq:TransInt}\frac{n^{1/4}e^{f_n^*(\t_2,v)-f_n^*(\t_1,u)}}{c_3 (2\pi i)^2 \sqrt{t_n(\t_1)t_n(\t_2)}} \int_{x_0+i\R} dz \int_{\Gamma} dw ~ \frac{e^{\phi_{n,\t_2} (z,v) - \phi_{n,\t_1} (w,u)}}{z-w},
\end{align}
abbreviating the phase function of the rescaled kernel again as
\begin{align}
	\phi_{n,\tau_2}(z,v) := \frac{n}{2 t_n(\tau_2)}\left(z-x_n^* \left(t_n (\tau_2)\right)-\frac{v}{c_3 n^{3/4}}\right)^2 +n g_{\mu_n}(z),\label{def:phi_P}
\end{align}
and  $f_n^*(\t,u):=f_n\lb t_n(\t),x_n^*(t_n(\t))+\frac{u}{c_3n^{3/4}}\rb$  are the adjusted gauge factors from \eqref{gauge_factor}.

\subsection*{Choice of contours and preparations}
 For the contour in the \(z\)-variable we choose the straight line $\Sigma:=x_n^*+i\R$, with orientation from bottom to top, whereas for the \(w\)-contour \(\Gamma\) special emphasis needs to be given to a $n^{-1/4}$-neighborhood of $x_n^*$. As before, let $D_n^{\frac{1}{4}}$ denote the closed disk of radius $n^{-1/4+\e}$ centered at $x_n^*$ for some \(0<\epsilon< \frac{1}{20}\). 

By Proposition \ref{proposition:density}, for every $\d>0$ and $n$ large enough, there are points $w_{1,n}$ and $w_{2,n}$ on $\partial D_n^{\frac{1}{4}}$ with $\pi/3-\d<\arg(w_{1,n}-x_n^*)<\pi/3+\d$ and $2\pi/3-\d<\arg(w_{2,n}-x_n^*)<2\pi/3+\d$ such that the graph of $x\mapsto y_{t_n(\t_1),\mu_n}(x)$ enters $D_n^{\frac{1}{4}}$ coming from the right at $w_{1,n}$ and leaves the disk $D_n^{\frac{1}{4}}$ at $w_{2,n}$ while staying in the interior of $D_n^{\frac{1}{4}}$ in between. With this, we now define the contour $\Gamma$: we start at a real point that lies to the right of the support of \(y_{t_n(\tau_1), \mu_n}\) and to the right of $D_n^{\frac{1}{4}}$ and follow the graph of \(y_{t_n(\tau_1), \mu_n}\) to the left until we arrive at the point $w_{1,n}$. From $w_{1,n}$ we move on a straight line segment to \(x_n^*\) and from \(x_n^*\) we move on a straight line segment to the point $w_{2,n}$. From here we continue to follow the graph of \(y_{t_n(\tau_1), \mu_n}\) again to the left until we stop at a real point to the left of the support of \(y_{t_n(\tau_1), \mu_n}\). Now we close the contour by joining it with the complex conjugate path so that the resulting contour \(\Gamma\) has a counter-clockwise orientation (see Figure \ref{figure:contour_Gamma_Pearcey}).

\begin{figure}[h]
	\centering
	\def\svgwidth{0.8\columnwidth}
	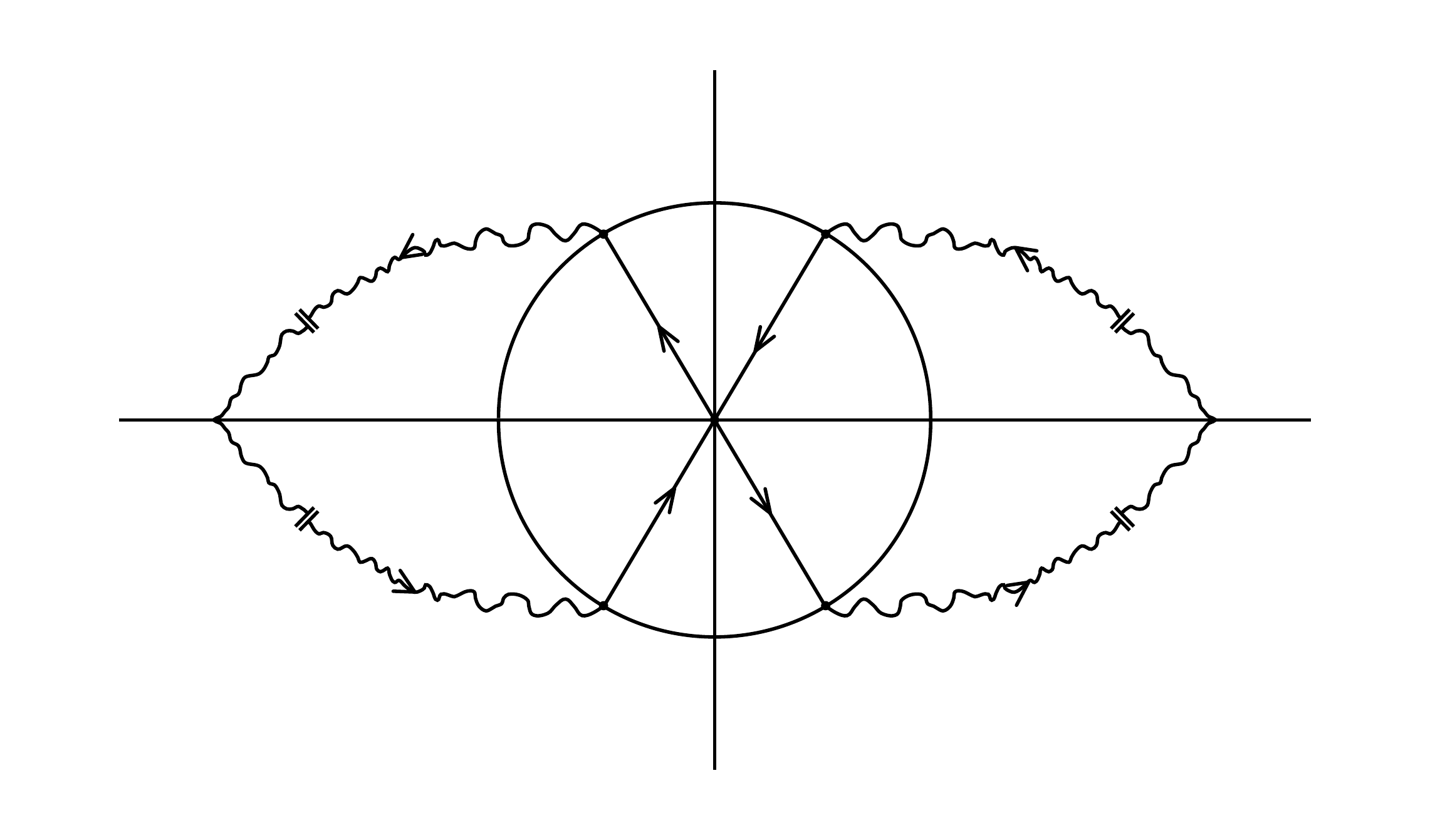
	\caption{Choice of the contour $\Gamma$}\label{figure:contour_Gamma_Pearcey}
\end{figure}

Now denote by $\Gamma_{\rm in}$ the part of $\Gamma$ located inside the disk $D_n^{\frac{1}{4}}$ and by $\Gamma_{\rm out}$ the remaining part, i.e.~
\begin{align}
	\Gamma_{\rm in}:=[w_{1,n},x_n^*]\cup[x_n^*,w_{2,n}]\cup {[\overline{w_{2,n}},x_n^*]}\cup{[x_n^*,\overline{w_{1,n}}]}, \quad \Gamma_{\rm out}:=\Gamma\setminus\Gamma_{\rm in}.
\end{align}
Analogously, we set $\Sigma_{\rm in}:=[x_n^*-in^{-1/4+\e},x_n^*+in^{-1/4+\e}]$ and $\Sigma_{\rm out}:=\Sigma\setminus\Sigma_{\rm in}$.
We use these contours in \eqref{eq:TransInt}, where we remark that the singularity at \(x_n^{*}\)  is again integrable, and we split up the rescaled kernel into the following parts:

\begin{align}
	&\label{def:Kn1_P}{K}_{n,\t_1,\t_2}^{(1)}(u,v):=\frac{n^{1/4}e^{f_n^*(\t_2,v)-f_n^*(\t_1,u)}}{c_3 (2\pi i)^2 \sqrt{t_n(\t_1)t_n(\t_2)}} \int_{\Sigma_{\rm in}} dz \int_{\Gamma_{\rm in}} dw ~ \frac{e^{\phi_{n,\t_2} (z,v) - \phi_{n,\t_1} (w,u)}}{z-w},\\
	&\label{def:Kn2_P}{K}_{n,\t_1,\t_2}^{(2)}(u,v):=\frac{n^{1/4}e^{f_n^*(\t_2,v)-f_n^*(\t_1,u)}}{c_3 (2\pi i)^2 \sqrt{t_n(\t_1)t_n(\t_2)}} \int_{\Sigma} dz \int_{\Gamma_{\rm out}} dw ~ \frac{e^{\phi_{n,\t_2} (z,v) - \phi_{n,\t_1} (w,u)}}{z-w},\\
	&\label{def:Kn3_P}{K}_{n,\t_1,\t_2}^{(3)}(u,v):=\frac{n^{1/4}e^{f_n^*(\t_2,v)-f_n^*(\t_1,u)}}{c_3 (2\pi i)^2 \sqrt{t_n(\t_1)t_n(\t_2)}} \int_{\Sigma_{\rm out}} dz \int_{\Gamma_{\rm in}} dw ~ \frac{e^{\phi_{n,\t_2} (z,v) - \phi_{n,\t_1} (w,u)}}{z-w}.
\end{align}

\subsection*{Local analysis of the kernel}
In this subsection, we analyze the main part of the kernel $K^{(1)}_{n,\t_1,\t_2}$ and show that it is asymptotically close to the transition kernel \(K^{a_n}_{\t_1,\t_2}\), where we recall that for \(n\geq 1\)
\begin{align}
	a_n=\lb-G_n^{(3)}\rb^{-3/4}I_n = \frac{1}{2}n^{1/4} \left(-G_n^{(3)}\right)^{-3/4} G_n^{(2)}.
\end{align} 
\begin{prop} \label{prop:weakasy}
	For \(0<\epsilon< \frac{1}{24}\) we have uniformly for $u,v,\t_1,\t_2$ in compacts
	\begin{align}
		{K}_{n,\t_1,\t_2}^{(1)}(u,v)=K^{a_n}_{\t_1,\t_2}(u,v)+\O(n^{-\e}), 
	\end{align}
	as \(n\to\infty\).
\end{prop}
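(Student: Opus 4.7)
The strategy is to rescale locally around $x_n^*$ so that the restricted double integral becomes, up to a small error, the defining integral of $\mathbb K^{a_n}_{\t_1,\t_2}(u,v)$. I introduce the change of variables
\[
\zeta = \frac{n^{1/4}(z-x_n^*)}{c_3 t_{\rm cr}},\qquad \w = \frac{n^{1/4}(w-x_n^*)}{c_3 t_{\rm cr}},
\]
so that $\Sigma_{\rm in}$ becomes a vertical segment through the origin of length $O(n^\epsilon)$, and $\Gamma_{\rm in}$ becomes an X-shaped contour with four arms emanating from the origin at angles close to $\pm\pi/3$ and $\pm 2\pi/3$ by Proposition \ref{proposition:density}, each of length $O(n^\epsilon)$. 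Combining the prefactor $\frac{n^{1/4}}{c_3(2\pi i)^2\sqrt{t_n(\t_1)t_n(\t_2)}}$ with the Jacobian $(c_3 t_{\rm cr})^2/n^{1/2}$ and the relation $1/(z-w)=(n^{1/4}/(c_3 t_{\rm cr}))/(\zeta-\w)$ collapses the overall scalar to $1/(2\pi i)^2$ up to an $O(n^{-1/2})$ relative error, using $t_n(\t_i)\to t_{\rm cr}$.

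Next I invoke Lemma \ref{lemma:expansion} and termwise integration to expand $ng_{\mu_n}(z)$ in powers of $z-x_n^*$ up to fourth order, with remainder $O(n(z-x_n^*)^5)$. After combining with the Gaussian part of $\phi_{n,\t_2}(z,v)$ and the gauge $f_n^*$ (and symmetrically for the $w$-variable), the $G_n^{(0)}$-linear term is cancelled by the gauge, $G_n^{(1)}=-1/t_{\rm cr}$ pairs with the Gaussian to produce $-\t_2\zeta^2/2$, the $G_n^{(2)}$-cubic term rescales into $\pm a_n\zeta^3/3$ (this is where $a_n$ enters, since the natural cubic coefficient in the rescaled variable is a multiple of $n^{1/4}G_n^{(2)}(-G_n^{(3)})^{-3/4}$), and the $G_n^{(3)}$-quartic term yields $\mp\zeta^4/4$ by the definition of $c_3$. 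The remaining linear pieces in $u,v$ become $-v\zeta+u\w$ up to $O(n^{-1/2})$. Thus the gauged phase equals
\[
-\tfrac{\zeta^4}{4}+\tfrac{a_n}{3}\zeta^3-\tfrac{\t_2}{2}\zeta^2-v\zeta+\tfrac{\w^4}{4}-\tfrac{a_n}{3}\w^3+\tfrac{\t_1}{2}\w^2+u\w+R_n(\zeta,\w,u,v,\t_1,\t_2),
\]
where $R_n$ is dominated by the quintic tail.

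The remainder $R_n$ scales as $O(n\cdot n^{-5/4+5\epsilon})=O(n^{-1/4+5\epsilon})$. To propagate this to the kernel I use the elementary bound $|e^x-1|\leq |x|e^{|x|}$ against an $L^1$-bound on the integrand without remainder, which is finite because $\Re(-\zeta^4/4)\leq -c|\zeta|^4$ on the vertical contour and $\Re(\w^4/4)\leq -c|\w|^4$ on the deformed $\Gamma$-contour (using $\cos(4\theta)<0$ for $\theta\in(\pi/8,3\pi/8)$, which covers all relevant angles). Boundedness of $a_n$, a direct consequence of $I_n=O(1)$ and $-G_n^{(3)}$ being bounded below by Assumption 2, keeps the cubic term in the phase uniformly controlled. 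Requiring $R_n$ to be absorbed into $O(n^{-\epsilon})$ gives the condition $5\epsilon-1/4\leq -\epsilon$, i.e.\ $\epsilon\leq 1/24$, which is precisely the range appearing in the statement.

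Finally I extend the rescaled local contours to the infinite contours of $\mathbb K^{a_n}$: the finite vertical segment to $i\R$, and the four finite arms to the rays at $\pm\pi/4,\pm 3\pi/4$ of $\Gamma^P$. The rotation of arms from angles near $\pm\pi/3,\pm 2\pi/3$ to those of $\Gamma^P$ passes only through sectors where $\cos(4\theta)<0$, so the quartic term provides uniform exponential decay along the connecting arcs at radius $O(n^\epsilon)$, producing errors of order $\exp(-cn^{4\epsilon})$; the tails extending to infinity contribute similarly and are dwarfed by $n^{-\epsilon}$. The main obstacle is the simultaneous handling of two competing scales: the quintic error $n^{5\epsilon-1/4}$ forces $\epsilon$ small, while contour deformations and tail estimates need $\epsilon$ large enough to give exponential decay at the rescaled radius $n^\epsilon$; the exponent $1/24$ is the optimal compromise, and its verification requires both Lemma \ref{lemma:expansion} with Assumption 2 (to control the quintic remainder uniformly) and the angular control of $w_{1,n},w_{2,n}$ furnished by Proposition \ref{proposition:density}.
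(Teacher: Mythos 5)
Your proposal is correct and follows essentially the same route as the paper's proof: the same rescaling $z=x_n^*+c_3t_{\rm cr}\zeta n^{-1/4}$, the same use of Lemma \ref{lemma:expansion} with the quintic remainder forcing $\epsilon\leq 1/24$, the same $\lv e^x-1\rv\leq\lv x\rv e^{\lv x\rv}$ device against an $L^1$-bound exploiting the signs of $\Re\zeta^4$ and $\Re\w^4$ together with the boundedness of $a_n$, and the same extension/deformation of the truncated contours to $i\R$ and $\Gamma^{\rm P}$. The only quibble is your closing framing of $1/24$ as a "compromise" between two competing constraints: the contour-tail estimates give decay $e^{-cn^{4\epsilon}}$ for every $\epsilon>0$, so the upper bound on $\epsilon$ comes solely from the quintic remainder, exactly as in the paper.
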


\begin{proof}
	
	We note that all error terms will be uniform for $u,v,\t_1,\t_2$ in compacts, and wherever not stated explicitly, the $\O$-terms are to be understood with respect to \(n\to\infty\). Under the change of variables
	\begin{align}
		z=x^{*}_n+\frac{c_3t_{\rm cr}\zeta}{n^{1/4}}\quad \text{ and } \quad w=x^{*}_n+\frac{c_3t_{\rm cr}\w}{n^{1/4}}\label{change_of_variables_Pearcey},
	\end{align} 
	the disk \(D_n^{\frac{1}{4}}\) is mapped conformally on $\tilde D_n^{\frac{1}{4}}:=\{\xi\in\C:\lv \xi\rv\leq 6^{-1/4}(-G_n^{(3)})^{1/4} n^{\e}\}$. Moreover, using the expansion in Lemma \ref{lemma:expansion} yields uniformly in \(z\) with \(\vert z-x_n^{*}\vert \leq K n^{-1/4+\e}\) for an arbitrary constant \(K>0\)
	\begin{align}
		&ng_{\mu_n}(z)=ng_{\mu_n}(x_n^*)+n\int_{x_n^*}^zG_{\mu_n}(\xi)d\xi\\
		&=ng_{\mu_n}(x_n^*)+n^{3/4}G^\0_nc_3t_{\rm cr}\zeta+\frac{n^{1/2}G^\1_nc_3^2t_{\rm cr}^2\z^2}2+\frac{n^{1/4}G^\2_nc_3^3t_{\rm cr}^3\z^3}{6}+\frac{G^\3_nc_3^4t_{\rm cr}^4\z^4}{24}+\O\lb\frac{\lv\zeta\rv}{n^{2\e}}\rb\label{calculation:g_Pearcey},
	\end{align} 
	where we used \(\e < \frac{1}{24}\) to control the error term. Now, an elementary calculation using the definition of $a_n$ and $c_3$ leads to
	\begin{align}
		&\phi_{n,\t_2}(z,v)-\phi_{n,\t_1}(w,u)+f_n^*(\t_2,v)-f_n^*(\t_1,u)\\
		&=-\frac{\z^4}4+\frac{a_n\z^3}3-\frac{\t_2\z^2}2-v\z+\frac{\w^4}4-\frac{a_n\w^3}3+\frac{\t_1\w^2}2+u\w +\O\lb \frac{1}{n^{\e}}\rb,\label{phase_function_local_P}
	\end{align}
	where the $\O$-term is uniform for $\lv\z\rv,\lv\w\rv\leq Kn^\e$ for $K>0$ (and $u,v,\t_1,\t_2$ in compacts), and we also used
	\begin{align}
		&\frac{n^{1/2}c_3^2t_{\rm cr}^2\z^2}2 \lb G^\1_n+\frac1{t_n(\t_2)}\rb= \frac{n^{1/2}c_3^2t_{\rm cr}^2\z^2}2\lb-\frac{1}{t_{\rm cr}}+\frac1{t_n(\t_2)}\rb\\
		&=-\frac{\z^2\t_2}2 \frac{t_{\rm cr}}{t_n(\t_2)}=-\frac{\z^2\t_2}2+\O\lb \frac{1+\lv \t_2\rv+\lv\z\rv^2}{n^{1/2}}\rb.
	\end{align}
	Furthermore, we readily see
	\begin{align}
		\frac{t_{\rm cr}}{(2\pi i)^2\sqrt{t_n(\t_1)t_n(\t_2)}}=\frac{1}{(2\pi i)^2}+\O\lb \frac{ 1}{n^{1/2}}\rb.
	\end{align}
	Now we define
	\begin{align}
		&h_n(\z,\w,u,v,\t_1,\t_2):=\phi_{n,\t_2}(z,v)-\phi_{n,\t_1}(w,u)+f_n^*(\t_2,v)-f_n^*(\t_1,u)\\
		&-\lb
		-\frac{\z^4}4+\frac{a_n\z^3}3-\frac{\t_2\z^2}2-v\z+\frac{\w^4}4-\frac{a_n\w^3}3+\frac{\t_1\w^2}2+u\w\rb=\O(n^{-\e}),
	\end{align}
	with uniformity of the $\O$-term as above, and performing the change of variables in both variables \eqref{change_of_variables_Pearcey} gives us for the part \eqref{def:Kn1_P}
	
	\begin{align}
		&{K}_{n,\t_1,\t_2}^{(1)}(u,v)\\
		&=\lb\frac{1}{(2\pi i)^2}+\O\lb \frac{ 1}{n^{1/2}}\rb\rb\int_{\widehat\Sigma} d\z \int_{\widehat\Gamma} d\w ~ \frac{e^{-\frac{\z^4}4+\frac{a_n\z^3}3-\frac{\t_2\z^2}2-v\z+\frac{\w^4}4-\frac{a_n\w^3}3+\frac{\t_1\w^2}2+u\w+h_n(\z,\w,u,v,\t_1,\t_2)}}{\z-\w},\label{K_n_1_beginning_P}
	\end{align}
	where $\widehat{\Sigma}$ and $\widehat{\Gamma}$ are the contours $\Sigma_{\rm in}$ and $\Gamma_{\rm in}$ under the change of variables \eqref{change_of_variables_Pearcey}, meaning that $\widehat\Sigma=\left[-\frac{in^{\e}}{c_3t_{\rm cr}},\frac{in^{\e}}{c_3t_{\rm cr}}\right]$ and
	\begin{align}
		\widehat\Gamma=\left[\frac{(w_{1,n}-x_n^*)n^{1/4}}{c_3 t_{\rm cr}},0\right]\cup\left[0,\frac{(w_{2,n}-x_n^*)n^{1/4}}{c_3 t_{\rm cr}}\right]\cup\left[\frac{(\overline{w_{2,n}}-x_n^*)n^{1/4}}{c_3 t_{\rm cr}},0\right]\cup\left[0,\frac{(\overline{w_{1,n}}-x_n^*)n^{1/4}}{c_3 t_{\rm cr}}\right].
	\end{align}
	In order to deal with the function $h_n$, we again use the inequality
	\begin{align}
		\lv e^z-1\rv\leq\lv z\rv e^{\lv z\rv}, \quad z\in\C,\label{inequality_exp}
	\end{align}
	and obtain
	\begin{align}
		\Bigg\vert &\int_{\widehat\Sigma} d\z \int_{\widehat\Gamma} d\w ~ \frac{e^{-\frac{\z^4}4+\frac{a_n\z^3}3-\frac{\t_2\z^2}2-v\z+\frac{\w^4}4-\frac{a_n\w^3}3+\frac{\t_1\w^2}2+u\w+h_n(\z,\w,u,v,\t_1,\t_2)}}{\z-\w}\\
		&-\int_{\widehat\Sigma} d\z \int_{\widehat\Gamma} d\w ~ \frac{e^{-\frac{\z^4}4+\frac{a_n\z^3}3-\frac{\t_2\z^2}2-v\z+\frac{\w^4}4-\frac{a_n\w^3}3+\frac{\t_1\w^2}2+u\w}}{\z-\w}\Bigg\vert\\
		\leq&\int_{\widehat\Sigma} \lv d\z\rv \int_{\widehat\Gamma} \lv d\w\rv ~ \lv h_n(\z,\w,u,v,\t_1,\t_2)\rv\frac{\lvv e^{-\frac{\z^4}4+\frac{a_n\z^3}3-\frac{\t_2\z^2}2-v\z+\frac{\w^4}4-\frac{a_n\w^3}3+\frac{\t_1\w^2}2+u\w+ \vert h_n(\z,\w,u,v,\t_1,\t_2) \vert} \rvv}{\lv\z-\w\rv}\\
		&=\O(n^{-\e}),
	\end{align}
	where we used $h_n=\O(n^{-\e})$ uniformly in all relevant variables, and the boundedness of the integral
	\begin{align}
		\int_{\widehat\Sigma} \lv d\z\rv \int_{\widehat\Gamma} \lv d\w\rv ~ \frac{\lvv e^{-\frac{\z^4}4+\frac{a_n\z^3}3-\frac{\t_2\z^2}2-v\z+\frac{\w^4}4-\frac{a_n\w^3}3+\frac{\t_1\w^2}2+u\w}\rvv}{\lv\z-\w\rv}
	\end{align}
	in $n$, which relies on the boundedness of $a_n$ and that the contours $\widehat\Sigma$ and $\widehat\Gamma$ lie in the regions where $\Re\z^4$ and $\Re\w^4$ are positive and negative, respectively.
	Summarizing, we have uniformly
	\begin{align}
		&{K}_{n,\t_1,\t_2}^{(1)}(u,v)=\frac{1}{(2\pi i)^2}\int_{\widehat\Sigma} d\z \int_{\widehat\Gamma} d\w ~ \frac{e^{-\frac{\z^4}4+\frac{a_n\z^3}3-\frac{\t_2\z^2}2-v\z+\frac{\w^4}4-\frac{a_n\w^3}3+\frac{\t_1\w^2}2+u\w}}{\z-\w}+\O(n^{-\e}).
	\end{align}

	Now, applying standard arguments we may replace the $n$-dependent contours $\widehat\Sigma$ and $\widehat\Gamma$ by their unbounded extensions at the expense of an exponentially small error, and finally, using analyticity, the \(\w\)-contour can be deformed to $\Gamma^{\rm P}$.
\end{proof}

\subsection{Analysis of remaining kernel parts}
In this subsection, we will show that the kernel parts ${K}_{n,\t_1,\t_2}^{(j)}$ for $j=2,3$ from \eqref{def:Kn2_P} and \eqref{def:Kn3_P}  are asymptotically of exponentially small order.

\begin{prop} \label{prop:Pearceyremaining} We have
	\begin{align}
		{K}_{n,\t_1,\t_2}^{(j)}(u,v)=\O(e^{-n^D}), \quad j=2,3,\label{remainder_vanish}
	\end{align}
	\(n\to\infty\), for some \(D>0\) uniformly for $\t_1,\t_2,u,v$ in compacts.
\end{prop}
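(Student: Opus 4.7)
The plan is to mirror the analysis of Proposition \ref{prop:Airyremaining}, but the argument is noticeably simpler here because in the Pearcey/transition regime the contour $\Gamma_{\rm out}$ coincides entirely with portions of the graph of $y_{t_n(\tau_1),\mu_n}$ lying outside $D_n^{1/4}$ (there is no intermediate scale $D_n^{1/3}$ to cross). I would treat $j=2$ and $j=3$ in parallel, splitting $\Sigma$ as $(\Sigma\cap D_n^{1/4})\cup(\Sigma\setminus D_n^{1/4})\cap\{|\Im z|<n\}\cup(\Sigma\cap\{|\Im z|\ge n\})$ and handling the resulting six pieces.

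The main estimate is at the boundary $\partial D_n^{1/4}$. Writing $z-x_n^*=n^{-1/4+\epsilon}e^{i\theta}$ and applying Lemma \ref{lemma:expansion}, the expansion \eqref{phase_function_local_P} shows that under the change of variables \eqref{change_of_variables_Pearcey} the phase is dominated by the quartic term, with $\Re\zeta^4>0$ on $\Sigma$ and $\Re\omega^4<0$ on $\Gamma_{\rm in}$ (by construction of $\Gamma$ via Proposition \ref{proposition:density}(2), since $\Gamma_{\rm in}$ lies in the sectors between $w_{1,n},w_{2,n}$ and their conjugates). Together with $a_n$ bounded, $G_n^{(3)}$ bounded away from $0$, and the cubic/quadratic/linear/gauge terms being of lower order on this scale, one obtains the key estimate
\begin{align*}
\Re\bigl(\phi_{n,\tau_2}(z,v)-\phi_{n,\tau_1}(w,u)+f_n^*(\tau_2,v)-f_n^*(\tau_1,u)\bigr)\le -cn^{4\epsilon}
\end{align*}
for $(z,w)\in(\Sigma\cap\partial D_n^{1/4})\times\Gamma_{\rm in}$ (relevant for $j=3$) and, by swapping the roles, for $(z,w)\in\Sigma_{\rm in}\times(\Gamma_{\rm out}\cap\partial D_n^{1/4})$ (relevant for $j=2$), uniformly for $u,v,\tau_1,\tau_2$ in compacts.

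I would then extend this decay away from $\partial D_n^{1/4}$ along both contours using monotonicity. Along $\Sigma$, the computation $\frac{\partial}{\partial\beta}\Re\phi_{n,\tau_2}(x_n^*+i\beta,v)=n\beta\bigl(\int\frac{d\mu_n(s)}{(x_n^*-s)^2+\beta^2}-\frac{1}{t_n(\tau_2)}\bigr)$ shows that $\Re\phi_{n,\tau_2}(z,v)$ decreases as $|\Im z|$ grows past the graph of $y_{t_n(\tau_2),\mu_n}$, which by Proposition \ref{proposition:density}(2) lies inside $D_n^{1/4}$. Along $\Gamma_{\rm out}$, the analog of Biane's $H$-function monotonicity used in the proof of Proposition \ref{prop:Airyremaining} propagates the bound from $\partial D_n^{1/4}$ to all of $\Gamma_{\rm out}$. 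Combined with the polynomial arc length bound for the graph of $y_{t_n(\tau_1),\mu_n}$ on its support (exactly as in the Claim on page~\pageref{claim}), the decoupling $|z-w|\ge c'n^{-1/4+\epsilon}$, and the prefactor $n^{1/4}$, the contributions over the bounded portions are $\O(e^{-cn^{4\epsilon}})$.

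The remaining point is the tail $\Sigma\cap\{|\Im z|\ge n\}$, where $\Sigma$ is unbounded. Here I would use the sub-Gaussian estimate from the Airy proof: by Assumption 2, $|g_{\mu_n}(z)-g_{\mu_n}(x_n^*)|\le C'|z-x_n^*|$ on $\Sigma$, while the quadratic part of $\phi_{n,\tau_2}$ gives $\Re(z-x_n^*(t_n(\tau_2))-v/(c_3n^{3/4}))^2\le -c(\Im z)^2$, yielding $\Re(\phi_{n,\tau_2}(z,v)-ng_{\mu_n}(x_n^*))\le -c'n(\Im z)^2$ and hence an $\O(e^{-Cn^3})$ contribution after integration, which easily absorbs the $\O(n)$ growth of $\phi_{n,\tau_1}-ng_{\mu_n}(x_n^*)$ on $\Gamma$ and the $\O(n^{1/2})$ growth of the gauge factors. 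The main (and only delicate) point in the whole argument is making sure that the $\partial D_n^{1/4}$ decay $e^{-cn^{4\epsilon}}$ is strong enough to beat the possibly polynomial arc length of $\Gamma_{\rm out}$ even in the presence of wildly spread initial configurations $\mu_n$; this is ensured precisely by the claim on the monotonicity intervals and the restriction to $\supp y_{t_n(\tau_1),\mu_n}$, so that the real-line portions of $\Gamma$ that are traversed twice cancel. Combining these pieces proves \eqref{remainder_vanish} with $D=4\epsilon$ (for any admissible $\epsilon<\tfrac{1}{24}$).
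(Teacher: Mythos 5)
Your proposal is correct and follows essentially the same route as the paper's proof: the key $-cn^{4\epsilon}$ bound on the phase at $\partial D_n^{1/4}$ from the quartic dominance in \eqref{phase_function_local_P}, its extension along $\Sigma$ via the sign of $\frac{\partial}{\partial\beta}\Re\phi_{n,\tau_2}$ and along $\Gamma_{\rm out}$ via the monotonicity of Biane's $H$, the decoupling $\lv z-w\rv\geq Cn^{-1/4+\epsilon}$ together with the polynomial arc-length claim, and the sub-Gaussian estimate on $\Sigma\cap\{\lv\Im z\rv\geq n\}$. The only cosmetic difference is that the paper anchors the boundary estimate at the points $w_{1,n},w_{2,n}$ rather than on all of $\partial D_n^{1/4}$, which is immaterial.
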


\begin{proof}
	As the statements in \eqref{remainder_vanish} can be derived in the vein of the proof of Proposition  \ref{prop:Airyremaining}, to keep it concise we will give the main steps, indicate the differences and how they can be dealt with.
	
	\vspace{1em}
	
	\noindent\textbf{Estimation of the kernel for \(j=2\):}\\
	
	We first decouple the integral into two single ones via the simple bound (for some constant \(C>0\))
	\begin{align}\label{simple_bound}
		\lv z-w\rv\geq Cn^{-1/4+\e}, \quad  z\in\Sigma, w\in\Gamma_{\rm out}.
	\end{align}
	Moreover, we know from the expansion in \eqref{phase_function_local_P} that we have for $z\in\Sigma_{\rm in}$ and $w\in\{w_{1,n},w_{2,n}\}$
	\begin{align}
		\Re(\phi_{n,\t_2}(z,v)-\phi_{n,\t_1}(w,u)+f_n^*(\t_2,v)-f_n^*(\t_1,u))\leq-C'n^{4\e},\label{bound_real_part}
	\end{align}
	for some \(C'>0\), uniformly for large \(n\) with respect to bounded \(u,v, \t_1,\t_2\), where we recall that $w_{i,n}, i=1,2$ are the final entrance and first exit points of $x\mapsto y_{t_n(\t_1),\mu_n}(x)$ into (out of) the disk $D_n^{\frac{1}{4}}$. This estimate can be extended to \(\Sigma \times \Gamma_{\rm out}\) along the lines extending \eqref{exponential_decay_Gamma2}, using the properties of \(\Sigma\) and \(\Gamma_{\rm out}\) being descent and ascent paths for the phase function. 
	
	Next, we split the contour \(\Sigma\) into a finite part $\Sigma\cap\{\lv z\rv\leq n\}$ and an infinite part $\Sigma\cap\{\lv z\rv> n\}$. The part of \({K}_{n,\t_1,\t_2}^{(2)}\) over $\Sigma\cap\{\lv z\rv\leq n\}\times\Gamma_{\rm out}$ is now readily seen to be of asymptotic order \(\O(e^{-n^D})\) for some \(D>0\), if we use \eqref{simple_bound}, \eqref{bound_real_part}, together with the bound on the lengths of the relevant contours. For the integral over $\Sigma\cap\{\lv z\rv>n\}\times\Gamma_{\rm out}$ we first observe sub-Gaussian decay for \(z\in\Sigma\cap\{\lv z\rv>n\}\)
	\begin{align}\label{sub-gauPear}\Re\left((\phi_{n,\t_2}(z,v)-ng_{\mu_n}(x_n^{*})\right) \leq -c n \left(\Im z\right)^2,
	\end{align}
	for some \(c>0\), for large \(n\), uniformly in bounded \(\t_2, v\). Furthermore, by \eqref{calculation:g_Pearcey} we know for $w\in\{w_{1,n},w_{2,n}\}$ 
	\begin{align}\label{pear_est}\Re\left((\phi_{n,\t_1}(w,u)-ng_{\mu_n}(x_n^{*})\right) = \O\left(n\right),
	\end{align}
	uniformly in bounded \(\t_2, v\), which extends to \(w\in \Gamma_{\rm out}\) using the same argument as in the extension of \eqref{bound_real_part}. Now, by decoupling via \eqref{simple_bound}, using \eqref{sub-gauPear}, \eqref{pear_est} and the bound on the length of \( \Gamma_{\rm out}\) (which is the same as in the Airy case) shows that the integral $\Sigma\cap\{\lv z\rv>n\}\times\Gamma_{\rm out}$ is of order \(\O(e^{-n^D})\) for some \(D>0\), which proves the statement for $j=2$.\\
	
	\noindent\textbf{Estimation of the kernel for \(j=3\):}\\
	
	In this last case we have to deal with the double integral in \eqref{def:Kn3_P} over \(\Sigma_{\rm out}\times \Gamma_{\rm in}\). To this end, we split the contour \(\Sigma_{\rm out}\) again into a finite part \(\Sigma_{\rm out}\cap\{\lv z\rv\leq n\}\) and an infinite part $\Sigma_{\rm out}\cap\{\lv z\rv>n\}$, and we observe that we can decouple the double integral the same way as in \eqref{simple_bound}. Then, for the integral over \(\Sigma_{\rm out}\cap\{\lv z\rv\leq n\}\times \Gamma_{\rm in}\), we use the bound \eqref{bound_real_part}, this time extended to the range \(\Sigma_{\rm out}\cap\{\lv z\rv\leq n\}\times \Gamma_{\rm in}\). Taking into account that the lengths of the contours grow polynomially, this shows that this part is of order \(\O(e^{-n^D})\) for some \(D>0\).
	For the integral over $\Sigma_{\rm out}\cap\{\lv z\rv>n\} \times \Gamma_{\rm in}$ we use the sub-Gaussian estimate \eqref{sub-gauPear} for $z \in \Sigma_{\rm out}\cap\{\lv z\rv>n\}$, together with the fact that for \(w\in\Gamma_{\rm in}\) we have the estimate \eqref{pear_est}, this shows that the integral is order \(\O(e^{-n^D})\) for some \(D>0\).
\end{proof} 

Finally, Theorem \ref{thrm_main} part (3) follows immediately from Propositions \ref{prop:weakasy} and \ref{prop:Pearceyremaining}. Moreover, part (2) of Theorem \ref{thrm_main} follows from Propositions \ref{prop:weakasy} and \ref{prop:Pearceyremaining} using \(I_n = \left(-G_{n}^{(3)}\right)^{3/4} a_n\) and
\begin{align}\mathbb K^{a_n}_{\t_1,\t_2}(u,v) = \mathbb K^{\rm P}_{\t_1,\t_2}(u,v) + \O\left(I_n\right)
\end{align}
as \(n\to\infty\), uniform for $u,v,\tau_1,\t_2$ in arbitrary compact subsets of $\mathbb R$, the latter following from \eqref{inequality_exp}.

\section{Proofs of Proposition \ref{prop:interpolation} and Corollaries \ref{cor_mu}, \ref{cor_random} }\label{section:corollaries}
\begin{proof}[Proof of Proposition \ref{prop:interpolation}]
Recall the definition of \(\mathbb K^a\) in \eqref{eq:transitionkernel}, substitute \(u \to a^{1/3}u\), \(v \to a^{1/3}v\), \(\tau_1 \to 2a^{2/3}\tau_1\), \(\tau_2 \to 2a^{2/3}\tau_2\) and multiply the entire kernel by \(a^{1/3}\). The heat kernel part becomes
\[1(\t_1>\t_2)\frac{1}{\sqrt{4\pi(\t_1-\t_2)}}\exp\lb-\frac{(u-v)^2}{4(\t_1-\t_2)}\rb,\]
so we only need to look after the integral part in \eqref{eq:transitionkernel}, which after the change of variables \(\zeta \to a^{-1/3}\zeta\) and \(\w \to a^{-1/3}\w\) now reads
\[\frac{1}{(2\pi i)^2} \int_{i\R} d\zeta \int_{\Gamma^P} d\w~ \frac{\exp\lb-\frac{a^{-4/3}}{4} \zeta^4+\frac{\z^3}{3}-\t_2\zeta^2-v\zeta +\frac{a^{-4/3}}{4}\w^4-\frac{\w^3}{3}+\t_1\w^2+u\w\rb}{\zeta-\w}.\]
Taking the limit \(a\to \infty\) readily transforms the integrand into the integrand of the extended Airy kernel in \eqref{extended_Airy}. However, before taking the limit we have to take care of the contours of integration, which we will do using Cauchy's Theorem. To this end, first we bend the vertical line \({i\R}\) at the origin so that the \(\zeta\) variable is integrated along the new contour \(\sigma\) consisting of the two rays from $\infty e^{-i \pi \frac{7}{16}}$ to 0 and from 0 to $\infty e^{+i \pi \frac{7}{16}}$. Next, for the integration with respect to the \(\w\) variable, we recall that $\Gamma^{\rm P}$ consists of four rays, two from the origin to $\pm\infty e^{-i\pi/4}$ and two from $\pm\infty e^{i\pi/4}$ to the origin. We leave the two rays on the left-hand side of the complex plane untouched, but we deform the ray in the first quadrant into the ray from  $\infty e^{i\pi/7}$ to \(0\) and the ray in the fourth quadrant into the ray from \(0\) to $\infty e^{-i\pi/7}$. The resulting set of rays we denote by \(\gamma\). All these deformations ensure that we stay in sectors of exponential decay of the integrand for all \(a\geq0\) as well as in the limit. In this form we can take \(a\to\infty\), which gives the limit
\[\frac{1}{(2\pi i)^2} \int_{\sigma} d\zeta \int_{\gamma} d\w ~ \frac{\exp\lb\frac{\zeta^3}3-\zeta v-\t_2\zeta^2-\frac{\w^3}3+\w u+\t_1\w^2\rb}{\zeta-\w}.\]
Next we observe that the two rays of the contour \(\gamma\) lying on the right-hand side of the complex plane do not give any contribution to the integral, as we can fold them onto the positive real axis so that their contributions will cancel out. Finally, we can again use Cauchy's Theorem to deform the remaining rays into the shape of \(\Sigma^{\rm Ai} \times \Gamma^{\rm Ai}\), which shows the desired limit.
\end{proof}

\begin{proof}[Proof of Corollary \ref{cor_mu}]
	We first observe that Assumption 1 holds as $\mu_n$ converges weakly to $\mu$. Setting $x_n^*:=x^*$, Assumption 2 holds by $\liminf\limits_{n\to\infty}\textup{dist}(x^*,\textup{supp}(\mu_n))>0$. 
	To apply Theorem \ref{thrm_main} in part (1), we must have for some $\t',u'$ from fixed compacts
	\begin{align*}
		&t^{\rm E}_{n,\mu}(\tau)=t^{\rm E}_{n}(\tau')\quad\text{and}\quad x_\mu^{*}(t_{n,\mu}^{\rm E} (\t))+\frac{u}{c_{2,\mu} n^{2/3}}=x_n^{*}(t_n^{\rm E} (\t'))+\frac{u'}{c_2 n^{2/3}}.
	\end{align*}
	The former equation yields
	\begin{align}
		\t' = \frac{c_2^2}{2}n^{1/3}\left(t_{\rm cr, \mu}(x^*)-t_{\rm cr}\right)+\frac{c_2^2 \tau}{c_{2,\mu}^2}.
	\end{align}
	By $d(\mu_n,\mu)=o(n^{-2/3})$ and $\liminf\limits_{n\to\infty}\textup{dist}(x^*,\textup{supp}(\mu_n))>0$ we get 
	\(G_n^{(1)} = G_{\mu}'(x^{*})+o(n^{-1/3})\) and \(G_n^{(2)}=G_{\mu}''(x^{*})+o(1)\), as \(n\to\infty\), which implies with the definitions of $c_2,c_{2,\mu},t_{\rm cr},t_{\rm cr,\mu}$ in \eqref{def:tnS}, \eqref{def:tnEmu}, \eqref{def:t_cr} and \eqref{linear_evolution2} that $\t'$ is bounded in $n$ and that
	\begin{align}
		x_n^{*}(t_{n}^{\rm E}(\t')) = x_{\mu}^{*}(t_{n,\mu}^{\rm E}(\t)) + o(n^{-2/3}),
	\end{align} uniformly with respect to \(\t\) in compacts. This proves part (1) by Theorem \ref{thrm_main}, using that $I_\mu(x^*)>0$ implies $I_n(x_n^*)\to\infty$.
	
	To apply Theorem \ref{thrm_main} in part (2), we must have for some $\t',u'$
	\begin{align}
		&t^{\rm M}_{n,\mu}(\tau)=t^{\rm M}_{n}(\tau')\quad\text{and}\quad x_\mu^{*}(t_{n,\mu}^{\rm M} (\t))+\frac{u}{c_{3,\mu} n^{3/4}}=x_n^{*}(t_n^{\rm M} (\t'))+\frac{u'}{c_3 n^{3/4}}.\label{mutomu_n}
	\end{align}
	Expliciting the first equation yields 
	\begin{align}
		\t' = c_3^2n^{1/2}\left(t_{\rm cr,\mu}(x^*)-t_{\rm cr}\right)+\frac{c_3^2 \tau}{c_{3,\mu}^2}.
	\end{align}
	By our assumptions we get \(G_n^{(1)}= G_{\mu}'(x^*)+ o(n^{-1/2})\), \(n^{1/4}G_n^{(2)}=o(1)\), and \(G_n^{(3)}= G_{\mu}'''(x^*) + o(1)\), as \(n\to\infty\). 
	This implies similarly to part (1) that $\t'$ is bounded and that
	\begin{align}
		x_n^{*}(t_{n}^{\rm M}(\t')) = x_{\mu}^{*}(t_{n,\mu}^{\rm M}(\t)) + o(n^{-3/4}),
	\end{align}
	which proves part (2).
	
	For part (3), note that $I_\mu(x^*)=0$ implies \eqref{define-argmax2}. Choose a sequence of starting configurations $(\mu_n)_n$ such that we have $d(\mu_n,\mu)=\O(n^{-1})$ and $\liminf_{n\to\infty}\textup{dist}(x^*,\textup{supp}(\mu_n))>0$. Let $x_n^*$ be the point \eqref{define-argmax} leading to the merging point w.r.t.~$\mu_n\boxplus\s_t$. From $d(\mu_n,\mu)=\O(n^{-1})$ and $\liminf_{n\to\infty}\textup{dist}(x^*,\textup{supp}(\mu_n))>0$ we conclude that $G_{\mu_n}(x_n^*)-G_{\mu}(x^*)=\O(n^{-1})$, $t_{\rm cr}(x_n^*)=t_{\rm cr,\mu}(x^*)+\O(n^{-1})$ and thus the two merging points $(x^*_\mu(t_{\rm cr,\mu}),t_{\rm cr,\mu})$ w.r.t.~$\mu\boxplus\s_t$ and $(x^*_n(t_{\rm cr}),t_{\rm cr})$ w.r.t.~$\mu_n\boxplus\s_t$ have a spatial and temporal distance of order $\O(n^{-1})$.  Now we modify $\mu_n$ by increasing the gap between the initial eigenvalues in $\mu_n$ around $x^*$ symmetrically, which increases the critical time of the merging point w.r.t.~$\mu_n\boxplus\s_t$. On the other hand, shifting all initial eigenvalues by some positive constant $\d_n$, i.e.~replacing $X_j(0)$ by $X_j(0)+\d_n$, shifts the support of $\mu_n\boxplus\s_t$ as well. Using both principles, i.e.~increasing the gap and shifting the initial spectrum, we can arrange that the $\mu\boxplus\s_t$-merging point $(x_\mu^*(t_{\rm cr,\mu}(x^*)),t_{\rm cr,\mu}(x^*))$ is a boundary point of the support of $\mu_n\boxplus\s_t$. Note that increasing the gap and shifting the initial spectrum lead to a deterioration of the rate $d(\mu_n,\mu)$. In fact, we can prescribe any rate $r_n=o(1)$, not faster than \(\O\left(n^{-1}\right)\), and find a $\mu_n$ such that $d(\mu_n,\mu)\sim r_n $, \(n\to\infty\), and 
	the $\mu$-merging point lies on the boundary of the support of $\mu_n\boxplus\s_t$. Now, in order to construct a desired sequence $(\mu_n)_n$, we can choose $d(\mu_n,\mu)$ to be of order $n^{-1/4}$ and let $x_n^*$ be the initial point leading to the point $(x_\mu^*(t_{\rm cr,\mu}(x^*)),t_{\rm cr,\mu}(x^*))$ in the $\mu_n\boxplus\s_t$-evolution. Then we have $I_n(x_n^*)\sim a'$ for some $a'>0$, and in fact we can arrange this for every prescribed $a'>0$. Now we have limiting transition correlations with parameter $a:=(-G'''_\mu(x^*))^{-3/4}a'$ by Theorem \ref{thrm_main}. The spatial shift by $b\t_1$ and $b\t_2$, respectively, can be seen as follows. We must have \eqref{mutomu_n} also in this case. 
	Expliciting this and using that 
	\begin{align*}
		(x_\mu^*(t_{\rm cr,\mu}(x^*)),t_{\rm cr,\mu}(x^*))=(x_n^*(t_{\rm cr}(x_n^*)),t_{\rm cr}(x_n^*)),
	\end{align*}
	we are left with the two equations
	\begin{align}
		&\frac{\t}{c_{3,\mu}^2n^{1/2}}=\frac{\t'}{c_{3}^2n^{1/2}} \quad\text{and}\quad\frac{\t G_{\mu}(x^*)}{c_{3,\mu}^2n^{1/2}}-\frac{\t' G_{\mu_n}(x_n^*)}{c_{3}^2n^{1/2}}+\frac{u}{c_{3,\mu} n^{3/4}}-\frac{u'}{c_3 n^{3/4}}=0.\label{error-terms}
	\end{align}
	The first equation gives with $d(\mu_n,\mu)=\O(n^{-1/4})$ and the macroscopic gap around $x^*$ that $c_{3,\mu}=c_3+\O(n^{-1/4})$, and thus we have $\t'=\t+\O(n^{-1/4})$. The second equation leads with $G_{\mu_n}(x_n^*)-G_\mu(x^*)\sim b'n^{-1/4}$  to picking up the error terms in \eqref{error-terms},  showing that an additional contribution of type $\frac {\t b}{n^{3/4}}$ appears which has to be compensated for by $u'$.
\end{proof}

\begin{proof}[Proof of Corollary \ref{cor_random}]
	By independence of $A$ and $G$, we are working with a probability measure $P=P_1\otimes P_2$ on a sample space $\Omega_1\times\Omega_2$, the measure $P_1$ describing the randomness of $A$ and $P_2$ the one of $G$. Let $x_n^*$ be the initial point leading to the largest edge of the support $b_n$ of $\mu_n\boxplus\s_t$ at time $t$, i.e.~$t=t_{\rm cr}(x_n^*)$ and $x_n^*(t_{\rm cr})=b_n$. The condition $t>(b-a)^2/c$ ensures $x_n^*>b$ and thereby Assumption 2 with probability $1-o(1)$. To see this, we bound for $x_n^*\in[a,b]$
	\begin{align}
		t_{\rm cr}(x_n^*)&=\lb \int\frac{d\mu_n(s)}{(x_n^*-s)^2} \rb^{-1}=\lb \int_{[a,b]}\frac{d\mu_n(s)}{(x_n^*-s)^2}+\int_{\R\setminus[a,b]}\frac{d\mu_n(s)}{(x_n^*-s)^2} \rb^{-1}\\
		&\leq \lb \int_{[a,b]}\frac{d\mu_n(s)}{(b-a)^2} \rb^{-1}\leq \frac{(b-a)^2}c.
	\end{align}
	This shows that, with probability $1-o(1)$, top edges $b_n$ of $\mu_n\boxplus\s_t$ for times $t>(b-a)^2/c$ must be reached by $x_n^*>b$, ensuring Assumption 2. Assumption 1 is trivially satisfied by $\mu_n([a,b])\geq c$ with probability $1-o(1)$.
	Define 	$k_n:=c_2(x_n^*)$ with $c_2$ from \eqref{def:tnS}. As $k_n$ and $b_n$ only depend on $\mu_n$ and thus $\w_1$, they are independent of $G$. 
	Let
	\begin{align}
		B:=\{\w_1\in\Omega_1\ :\ \text{all eigenvalues of $A(\w_1)$ are}\leq b\}.
	\end{align}
	As the eigenvalues of $A$ are continuous functions of its entries, $B$ is measurable. Since $P_1(B)=1-o(1)$ we have 
	\begin{align}
		P\lb k_nn^{2/3}(x_{\max}-b_n)\leq s\rb= \E_{P_1}\lb P_2\lb k_nn^{2/3}(x_{\max}-b_n)\leq s\rb1_B\rb+o(1),
	\end{align}
	as $n\to\infty$. The corollary now follows from Corollary \ref{cor_Airy_2}.
\end{proof}

\printbibliography

\end{document}